\documentclass[preprint,12pt]{elsarticle}

\usepackage{hyperref}
\hypersetup{
    colorlinks,
    citecolor=black,
    filecolor=black,
    linkcolor=black,
    urlcolor=black
}

\usepackage{amssymb}
\usepackage{amsmath}
\usepackage{mathtools}
\usepackage{amsfonts}
\usepackage{amsthm}
\usepackage{float}
\usepackage{enumitem}
\usepackage{times}
\usepackage{tikz-cd}
\usetikzlibrary{shapes.geometric}
\usepackage{mathabx}
\usepackage{stmaryrd}
\usepackage{multicol}
\usetikzlibrary{arrows}
\tikzset{global scale/.style={
    scale=#1,
    every node/.style={scale=#1}
  }
}
\usepackage{comment}

\RequirePackage[normalem]{ulem}

\floatstyle{boxed}
\newtheorem{theorem}{Theorem}[section]
\newtheorem{proposition}[theorem]{Proposition}
\newtheorem{lemma}[theorem]{Lemma}
\newtheorem{corollary}[theorem]{Corollary}

\newtheorem*{claim*}{Claim}
\theoremstyle{definition}

\newtheorem{remark}[theorem]{Remark}

\newcommand{\va}{\mathsf} 
\newcommand{\m}{\mathbf} 

\newcommand{\jn}{\mathbin{\vee}}
\newcommand{\mt}{\mathbin{\wedge}}
\newcommand{\ZZ}{\mathbb{Z}}
\newcommand{\NN}{\mathbb{N}}
\newcommand{\vr}{\mathbb{V}}

\newcommand{\Grp}[1]{{#1_{g}}}
\newcommand{\grp}[1]{{#1_{g}}}
\newcommand{\cn}[1]{\boldsymbol{#1}}
\newcommand{\g}{\sigma}
\newcommand{\h}{\gamma}
\newcommand{\slt}{\delta}
\DeclareMathOperator{\lcm}{lcm}
\DeclareMathOperator{\supp}{supp}
\newcommand{\loc}[1]{{#1}_{\mathrm{loc}}}
\newcommand{\glob}[1]{{#1}_{\mathrm{glo}}}

\DeclareMathOperator{\per}{per}
\newcommand{\comp}[1]{{#1}_{\mathrm{cp}}}

\newcommand{\f}{\varphi}
\newcommand{\ps}{\psi}
\newcommand{\cc}{\mathbf{c}}

\journal{-}

\begin{document}

\begin{frontmatter}

\title{Proper varieties of distributive  \texorpdfstring{$\ell$}{ℓ}-pregroups are periodic}
\author[inst1]{Nikolaos Galatos}
\ead{ngalatos@du.edu}

\affiliation[inst1]{organization= {Department of Mathematics, University of Denver},
            addressline={2390 S. York St.}, 
            city={Denver},
            postcode={80208}, 
            state={CO},
            country={USA}}

\author[inst2]{Simon Santschi}
\ead{santschi@cs.cas.cz}
\affiliation[inst2]{organization= {Institute of Computer Science, Czech Academy of Sciences},
            addressline={Pod Vodárenskou věží 271/2}, 
            city={Praha 8},
            postcode={182 00}, 
            country={Czechia}} 

\begin{abstract}
We show that every proper 
variety of distributive $\ell$-pregroups is periodic; as a result, the subvariety lattice of  the variety of distributive $\ell$-pregroups decomposes into a disjoint union of bounded intervals each consisting of all properly $n$-periodic varieties, for different $n \in \ZZ^+$.  
On the way, we study non-periodic (one-generated) subalgebras of $\m F(\ZZ)$ and show that they either contain a non-periodic idempotent or they contain idempotents of arbitrary big periodicity. 
The same results holds for non-periodic one-generated subalgebras of $\m{F}(\m{J} \overrightarrow{\times} \ZZ)$, where $\m J$ is a chain, generated by a composite element and the resulting idempotents can be taken to be local in the associated wreath product. 
\end{abstract}
\begin{keyword}
periodic lattice-ordered pregroups\sep  subvariety lattice\sep variety generation \sep residuated lattices \sep lattice-ordered groups 
\end{keyword}
\end{frontmatter}

\section{Introduction}\label{s: intro}
A \emph{lattice-ordered pregroup} (\emph{$\ell$-pregroup}) is an algebra
$(A,\mt,\jn,\cdot,1,{}^{\ell} ,{}^{r})$,
such that $(A,\mt,\jn)$ is a lattice, $(A,\cdot,1)$ is a monoid, multiplication preserves the lattice order, and for all $a\in A$,
\[
a^{\ell} a\leq1\leq aa^{\ell} \text{ and }aa^{r}\leq1\leq a^{r}a.
\]

The $\ell$-pregroups that satisfy $x^\ell\approx x^r$ are the well-known and extensively studied $\ell$-groups (see \cite{AF, KM, Da}), and many results about various classes of $\ell$-pregroups extend existing results about $\ell$-groups. 
Versions of $\ell$-pregroups where the order is not assumed to be a lattice order are known as \emph{pregroups} and are of importance in mathematical linguistics, both theoretical and applied; (see \cite{La, Bu, Ba}). 

Moreover, $\ell$-pregroups are precisely the involutive residuated lattices for which multiplication is its own  De Morgan dual. 
As residuated lattices include structures such as Boolean algebras, Heyting algebras, relation algebras, and ideals lattices of rings, among others, the structure of $\ell$-pregroups connects to many other parts of ordered algebra. 
Also, as residuated lattices form algebraic semantics for substructural logics,  varieties of $\ell$-pregroups correspond to various such logics; see \cite{GJKO}. 

An $\ell$-pregroup is called \emph{distributive} if its underlying lattice is distributive; the variety of all distributive $\ell$-pregroups is denoted by $\mathsf{DLP}$. 
Examples of distributive $\ell$-pregroups include algebras of the form $\m F(\m \Omega)$, where $\m \Omega$ is a chain; the underlying set $F(\m \Omega)$ consists of all (order-preserving) maps on $\m \Omega$ that have residuals and dual residuals of all orders (see next section), multiplication is functional composition, $1$ is the identity function on $\Omega$, the order, join and meet are pointwise, and $f^r$ and $f^\ell$ are the the residual and the dual residual of the function $f$, respectively. 
 
 We call an element $a$ in an $\ell$-pregroup  \emph{$n$-periodic}, for  $n \in \NN$, if it satisfies  $a^{\ell^n}=a^{r^n}$; the $n$-periodic elements of an $\ell$-pregroup always form a subalgebra of the $\ell$-pregroup. 
 If all elements are $n$-periodic, the $\ell$-pregroup itself will be called \emph{$n$-periodic} and the variety of all $n$-periodic $\ell$-pregroups is denoted by $\mathsf{LP}_n$.
 
 It is well-known and easy to prove that all $\ell$-groups are distributive, i.e., $\mathsf{LP}_1 \subseteq \mathsf{DLP}$; in \cite{GJ} it is actually shown that $\mathsf{LP}_n \subseteq \mathsf{DLP}$, for all $n \in \ZZ^+$, so periodicity implies distributivity. 
 Also, in \cite{GG1} it is proved that $\bigvee_{n \in \ZZ^+}\mathsf{LP}_n = \mathsf{DLP}$. 
 In this work, we prove that actually every proper subvariety of $\mathsf{DLP}$ is already contained in $\mathsf{LP}_n$ for some $n$, i.e., that every proper variety of distributive $\ell$-pregroups is periodic.

 While working with distributive $\ell$-pregroups, we will be making use of their representation. 
 In particular, in \cite{GJ}, in analogy to Cayley's representation for groups and Holland's representation for $\ell$-groups, it is shown that every distributive $\ell$-pregroup can be embedded in  $\m F(\m \Omega)$, for some chain $\m \Omega$, and in \cite{GG1} it is proved that $\m \Omega$ can be taken to be discrete, i.e., of the form $\m{J} \overrightarrow{\times} \ZZ$, where $\m J$ is a chain.
 This specializes, as shown in \cite{GG2}, to obtain  that every $n$-periodic $\ell$-pregroup can be embedded in $\m F_n(\m{J} \overrightarrow{\times} \ZZ)$, for some chain $\m J$; here  $\m{F}_n(\m{\Omega})$ denotes the subalgebra of the $n$-periodic elements of   $\m{F}(\m{\Omega})$.
 This is important as in \cite{GG2} the notion of a wreath product of an $\ell$-group and an $\ell$-pregroup is defined (extending the usual notion of wreath products for groups, monoids, and $\ell$-groups) and it is shown that $\m F_n(\m{J}\overrightarrow{\times} \ZZ)$ is isomorphic to the wreath product $\m{Aut}(\m{J}) \wr \m{F}_n(\ZZ)$, where $\m{Aut}(\m{J})$ is the $\ell$-group of all order-preserving automorphisms on $\m J$. 
 This wreath product decomposition does not extend to the setting of $\m F(\m{J}\overrightarrow{\times} \ZZ)$, but in Section~\ref{s: composite} we identify a suitable subalgebra of $\m F(\m{J}\overrightarrow{\times} \ZZ)$ (of \emph{composite elements}) that is isomorphic to the wreath product $\m{Aut}(\m{J}) \wr \m{F}(\ZZ)$ and plays an important role in our study.

To prove our main result, that every proper variety of distributive $\ell$-pregroups is periodic, we consider an arbitrary non-periodic distributive $\ell$-pregroup $\m L$ and show that it generates all of $\mathsf{DLP}$. Toward that goal, in Section~\ref{s: dichotomy} we prove a dichotomy theorem: such an $\m L$ either contains a non-periodic idempotent or it contains idempotents of arbitrary big periodicity. The second case can already be handled using results in Section~\ref{s: prelims}, as Corollary~\ref{cor:per-gen-DLP} states that in this case $\m L$ generates all of $\mathsf{DLP}$. The first case requires more work and the new notion of \emph{simulation}.
 
The decomposition results of \cite{GG1, GG2} mentioned above, are used there to establish  generation results: $\mathsf{DLP}=\vr(\m F(\ZZ))=\vr(\m F_\mathrm{fs}(\ZZ))=\bigvee_{n \in \ZZ^+}\vr(\m F_n(\ZZ))$; here $\m F_\mathrm{fs}(\ZZ)$ denotes the subalgebra of  $\m F(\ZZ)$ of functions with finite support. 
In Section~\ref{s: simulation} we define the notion of an $\ell$-pregroup \emph{simulating} an other $\ell$-pregroup and, using the above generation results, we show that if an $\ell$-pregroup simulates $\m F_\mathrm{fs}(\ZZ)$, or it simulates $\m F_n(\ZZ)$ for infinitely-many $n\in \ZZ^+$, then it generates the variety $\mathsf{DLP}$.

In Section~\ref{s: main} we define a term that improves  a positive idempotent to one where its maximal height (difference between input and output) is controlled. This is then used to prove that  every non-periodic positive idempotent  of $ \m{F}(\ZZ)$ generates a subalgebra that in turn generates all of $\mathsf{DLP}$. These two facts are the last missing ingredients to handle the remaining case of the dichotomy theorem, thus yielding the main result that all proper subvarieties of $\mathsf{DLP}$ are periodic.

Combining our main theorem with results in \cite{GS}, it follows that  the subvariety lattice of $\mathsf{DLP}$ decomposes into a disjoint union of bounded intervals each consisting of all properly $n$-periodic varieties, for different $n \in \ZZ^+$. 
As a result, the study of the subvariety lattice of $\mathsf{DLP}$ reduces to the study of these $n$-periodic intervals and the relations between them.

In \cite{GG1, GG2}, the generation results  mentioned above are accompanied with a proof of the decidability of the equational theory of $\mathsf{DLP}$, obtained by undertaking a complex analysis of failures of equations in the generating algebras.
We mention that the same generation results, combined with our main result that all proper subvarieties of $\mathsf{DLP}$ are periodic, can be used to provide an alternative, and much simpler and shorter, proof that $\mathsf{DLP}$ is decidable.\footnote{For the sake of transparency, we note that the generation results and the decidability results in \cite{GG1, GG2} are proved simultaneously, in a single overall argument. Also, the relative simplicity of our new proof of decidability is counterbalanced by the fact that the much more involved proof in \cite{GG1, GG2} is accompanied by a complexity bound on the decision algorithm.}

We conclude the paper with Section~\ref{s: app}, where we use the main theorem to obtain very simple axiomatizations of varieties of the form   $\bigvee_{k= 1}^n \mathsf{LP}_k$, for $n\in \ZZ^+$.

\section{Preliminaries}\label{s: prelims}
 Recall that for functions $f,g$ on a poset $\m \Omega$, we say that $g$ is the residual of $f$ (and write $g=f^r$) and that $f$ is the dual residual of $g$ (and write 
 $f=g^\ell$) if for all $x,y \in \Omega$, we have $f(x) \leq y \Leftrightarrow x \leq g(y)$. 
 In case that the residual and dual residual of $f$ exist, they are given by the formulas
\[
f^{\ell}(x)=\min \{ y\in \Omega : x\leq f (y) \}
\quad \text{ and } \quad 
f^{r}(x)=\max \{ y\in \Omega : f(y)\leq x \}.
\]
Also, for all $n \in \NN$, the \emph{$n$th residual} of $f$ or \emph{residual of order $n$}, if it exists, is denoted by $f^{\ell^n}$ and is defined inductively by $f^{\ell^0}=f$, $f^{\ell^{n+1}}=(f^{\ell^n})^\ell$; likewise we define the \emph{$n$th dual residual} of $f$ or \emph{dual residual of order  $n$} and we denote it by $f^{r^n}$.

 Given a chain $\m \Omega$,  $F(\m \Omega)$ denotes the set of all (order-preserving) maps on $\m \Omega$ that have residuals and dual residuals of all orders. 
 As mentioned in the introduction, this set supports an algebra $\m F(\m \Omega)$, where multiplication is functional composition, $1$ is the identity function on $\Omega$, the order, join and meet are pointwise, and $f^r$ and $f^\ell$ are the residual and the dual residual of the function $f$, respectively. 
 We mention that, in particular, the functions in $\m F(\ZZ)$ end up being the functions on $\ZZ$ that are order-preserving and finite-to-one, i.e., the preimage of every point is a finite (possibly empty) interval.
 If the chain $\m \Omega$ is discrete, every element $x \in \Omega$ has an upper cover, denoted by $x+1$, and a lower cover, denoted by $x-1$; we also define $x+k$ inductively for  $k \in \ZZ$. 
 In this case, the functions $f \in \m F (\m \Omega)$ that satisfy $f^{\ell^n}=f^{r^n}$ are exactly the ones that repeat themselves periodically with period $n \in \NN$, i.e., $f(x+n)=f(x)+n$ for all $x \in \Omega$, thus justifying the term $n$-periodic for arbitrary elements of $\ell$-pregroups.
 Finally, we set $f^{[n]}:=f^{\ell^{2n}}$ and $f^{[-n]}:=f^{r^{2n}}$, for $n\in \NN$.
If the chain $\m \Omega$ is discrete, then $f^{[n]}$ can be calculated in a simple way, as shown in Lemma~2.3 of \cite{GG2}: 
for $f \in \mathrm{F}(\m \Omega)$ and $n\in \ZZ$, $f^{[n]}(x) = f(x-n) + n$. 
We also mention that the maps $x \mapsto x^{[k]}$ are automorphisms on every $\ell$-pregroup; see Lemma~2.4 of \cite{GS}.

As we mentioned,  $\m{F}_n(\m{\Omega})$ denotes the subalgebra of  $\m{F}(\m{\Omega})$ consisting of all $n$-periodic functions. 
Also, if $a$ is an element of  an $\ell$-pregroup, the \emph{periodicity} $\per(a)$ of $a$ is defined to be the minimum  $k\geq 1$ such that $a$ is $k$-periodic; we write $\per(a)=\infty$ if no such minimum exists. 

Recall that $\m{F}_\mathrm{fs}(\ZZ)$ denotes the subalgebra of $\m{F}(\ZZ)$ consisting of finite support members, i.e., the elements $f\in \mathrm{F}(\ZZ)$ where $\{x \in \ZZ : f(x) \neq x\}$ is finite.

\begin{proposition}[{\cite[Corollary~4.2]{GG1}}]\label{p:DLP-gen-fs}
  $\vr(\m{F}_\mathrm{fs}(\ZZ)) = \va{DLP}$.
\end{proposition}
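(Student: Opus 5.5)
Since $\m F_\mathrm{fs}(\ZZ)$ is a distributive $\ell$-pregroup, $\vr(\m F_\mathrm{fs}(\ZZ)) \subseteq \va{DLP}$ is immediate, and the work is in the reverse inclusion. We plan to route it through the known generation result $\va{DLP}=\vr(\m F(\ZZ))$ from \cite{GG1}, taken as given via the representation theorem. It then suffices to show that every equation failing in $\m F(\ZZ)$ already fails in $\m F_\mathrm{fs}(\ZZ)$.

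So suppose $s\approx t$ fails in $\m F(\ZZ)$ under an assignment $x_i\mapsto f_i$, witnessed at a point $p\in\ZZ$, say $s(\bar f)(p)\neq t(\bar f)(p)$. The key observation is locality. Using $f^{[n]}(x)=f(x-n)+n$ and the explicit formulas for $f^\ell$ and $f^r$, the value of a term at $p$ depends only on the values of the $f_i$ on a bounded window $[p-N,p+N]$. Here $N$ is computed by induction on the term from the nesting depth of residuals and the "spread" of the $f_i$ on the relevant points. Joins, meets and composition only require evaluating the subterms on finitely many points. A residual $f^\ell(x)$ needs $f$ only near the finitely many $y$ with $f(y)$ close to $x$, which is a bounded set because $f$ is finite-to-one and order-preserving.

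We then construct $g_i\in\m F_\mathrm{fs}(\ZZ)$ that agree with $f_i$ on a sufficiently large window $W$ and equal the identity far outside it. To keep $g_i$ order-preserving and finite-to-one, we interpolate monotonically on two transition zones, for example $g_i(x)=\max(x, f_i(\text{right end of } W))$ beyond $W$, capped suitably, and symmetrically on the left. The window $W$ must be chosen so that all points used in the evaluation, including those reached through iterated residuals, lie in the region where $g_i=f_i$ and where the residuals of $g_i$ agree with those of $f_i$. With this choice, $s(\bar g)(p)=s(\bar f)(p)\neq t(\bar f)(p)=t(\bar g)(p)$, so the equation fails in $\m F_\mathrm{fs}(\ZZ)$.

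The main obstacle is this locality step: showing that residuals of the modified functions agree with the original ones on a smaller window. The formula $f^\ell(x)=\min\{y: x\le f(y)\}$ involves the global shape of $f$. We must check that, on points well inside $W$, this minimum is attained inside $W$ for both $f_i$ and $g_i$. This requires choosing the window margins depending on the term depth, shrinking the window by a fixed amount per residual operation. We will carry out the induction on terms with an explicit shrinking window to manage this.
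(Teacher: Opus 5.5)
Your argument is correct, but it is not the paper's route, because the paper gives no proof of this proposition: it is quoted from \cite{GG1}. As the introduction explains, there it comes out of the single overall analysis of failures of equations that yields the generation and decidability results together.

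You instead derive the finite-support version from the other generation theorem $\va{DLP}=\vr(\m F(\ZZ))$ by truncation. This is exactly the simulation technique the paper develops later in Section~\ref{s: simulation}. Take $\m J$ to be a singleton, so every element of $\m F_\mathrm{fs}(\ZZ)$ is local. Then Lemmas~\ref{l:simulation_connectives} and~\ref{l:simulation_diagram} and Proposition~\ref{p: simvar} give $\vr(\m F(\ZZ))\subseteq\vr(\m F_\mathrm{fs}(\ZZ))$ as soon as $\m F(\ZZ)\precsim\m F_\mathrm{fs}(\ZZ)$. Your truncation supplies precisely that: $g=f$ on $[-K,K]$, $g(x)=\max(x,f(K))$ for $x>K$, and $g(x)=\min(x,f(-K))$ for $x<-K$. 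This $g$ is order-preserving, finite-to-one and finitely supported, and $g\sim_K f$; no further capping is needed.

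What your route buys is that the step from $\m F(\ZZ)$ down to $\m F_\mathrm{fs}(\ZZ)$ becomes elementary and self-contained. What it does not buy is independence from \cite{GG1}, since the deep input $\va{DLP}=\vr(\m F(\ZZ))$ is still imported. That input is also not a consequence of the representation theorem, which only embeds distributive $\ell$-pregroups into $\m F(\m J\overrightarrow{\times}\ZZ)$. Getting down to $\m F(\ZZ)$ is the substantial content of \cite{GG1}, so cite it as a theorem there rather than ``via the representation theorem''.

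One imprecision in your locality step: the window does not shrink by a fixed amount per residual, because the margins depend on the functions.
To get $g^\ell\sim_k f^\ell$ you need $g\sim_a f$ with $a=\max\{|f^\ell(k)|,|f^\ell(-k)|+1\}$, which may be smaller or larger than $k$. For a product, the outer factor must agree on a window covering the range of the inner factor on $[-k,k]$. Since the $f_i$ are fixed before the window is chosen, the recursion over the term tree still produces a finite window. Also, the residual step needs nothing global about $g$. The value $y_0=f^\ell(x)$ is characterized by $f(y_0-1)<x\le f(y_0)$, and this characterization transfers to any order-preserving $g$ agreeing with $f$ at $y_0-1$ and $y_0$.
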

In the following, if $X$ is a subset of an algebra $\m A$, we denote by $\langle X \rangle$ the subalgebra of $\m A$ generated by $X$; also, 
we write $\langle x \rangle$ for $\langle \{x\} \rangle$. 

We define $\cc \in \mathrm{F}_\mathrm{fs}(\ZZ)$ by 
\[
\cc(k) = \begin{cases}
    1 & \text{if } k=0, \\
    k & \text{otherwise.}
\end{cases}
\]
\begin{lemma}[{\cite[Theorem~6.8]{Gallardo}}]\label{l:Ffs-gen}
     $\m{F}_\mathrm{fs}(\ZZ)$ is  generated by $\cc$ (using only multiplication, $^\ell$, $^r$, $1$). 
\end{lemma}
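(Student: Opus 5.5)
We need to show that every finite-support element of $\mathrm{F}(\ZZ)$ is a product of elements of the form $\cc^{[k]}$ and $(\cc^{\ell})^{[k]}$, $k\in\ZZ$. These are obtained from $\cc$ by iterated $^\ell$, $^r$, since $\cc^{[k]}=\cc^{\ell^{2k}}$ for $k\geq 0$ and $\cc^{[k]}=\cc^{r^{-2k}}$ for $k<0$. By the formula $f^{[n]}(x)=f(x-n)+n$, the element $\cc_k:=\cc^{[k]}$ sends $k\mapsto k+1$ and fixes every other point. So $\cc_k$ is the ``elementary collapse'' identifying $k$ and $k+1$ upward. Likewise I compute $\cc^\ell(x)=\min\{y: x\leq \cc(y)\}$. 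This gives $\cc^\ell(1)=0$, and more precisely $\cc^\ell(0)=0$, $\cc^\ell(1)=0$, and $\cc^\ell(x)=x$ otherwise. So $\cc^\ell$ collapses $1$ down onto $0$, and its shifts $\mathbf d_k:=(\cc^\ell)^{[k]}$ send $k+1\mapsto k$ and fix all other points.

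The main step is a normal form for finite-support $f$. Such an $f$ is order-preserving and finite-to-one with $f(x)=x$ outside some interval $[-N,N]$. Hence $f$ maps $[-N,N]$ into itself monotonically. Conversely, every monotone self-map of a finite chain $\{-N,\dots,N\}$ extended by the identity lies in $\mathrm{F}_\mathrm{fs}(\ZZ)$. So it suffices to show that the monoid $\mathcal O_m$ of order-preserving self-maps of an $m$-element chain is generated by the maps $e_i$ ($i\mapsto i+1$, all else fixed) and $d_i$ ($i+1\mapsto i$, all else fixed). This is a classical fact: $\mathcal O_m$ is generated by its idempotents of rank $m-1$, and these are exactly the $e_i,d_i$. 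I would nevertheless give a direct inductive proof. Induct on $\sum_x |f(x)-x|$. If $f\neq 1$, pick the largest $x$ with $f(x)>x$. Monotonicity of $f$ and the fact that no point above $x$ moves up imply $f(x+1)\geq f(x)\geq x+1$, so $f(x+1)=x+1$ is forced... This does not quite work, so instead I would peel off a factor on the left. Take $y=f(x)-1$. Then $f=e_{y}\circ g$, where $g$ agrees with $f$ except that the points mapped to $y+1$ that lie at or below $x$ are sent to $y$ instead. One checks that $g$ is still monotone and has smaller total displacement. The symmetric case, with the smallest $x$ satisfying $f(x)<x$, uses $d_y$.

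The expected obstacle is checking that $g$ remains order-preserving, and that the composite really reproduces $f$. The latter needs that nothing in the image of $g$ other than the redirected points lands on $y$. If this bookkeeping gets messy, I would fall back on the standard factorization in the monoid $\mathcal O_m$: write $f$ as (epimorphic collapse) $\circ$ (order embedding), since monotone maps on chains factor through their image. Surjective parts are products of $d_i$'s and $e_i$'s merging adjacent blocks. Inclusions, shifted via the identity outside $[-N,N]$, are handled as sequences of $e_i$ pushing points upward from the top down. Only multiplication, $^\ell$, $^r$ are used, plus $1$ for the identity, which gives the stated refinement.
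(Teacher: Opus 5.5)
Your reduction has a small slip. A finite-support $f$ need not map $[-N,N]$ into itself: the support of $\cc$ is $\{0\}$, yet $\cc(0)=1\notin[0,0]$. Fix it by taking $N$ with the support inside $[-N+1,N-1]$; then $f(\pm N)=\pm N$ and monotonicity gives the claim.

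More importantly, the direct induction you sketch fails. With $x$ the largest point such that $f(x)>x$, you do get $f(x)=f(x+1)=x+1$, hence $y=x$. Then $e_x\circ g=f$ holds only if $x\notin f[\ZZ]$, and this can fail. Take $f(0)=1$, $f(1)=f(2)=2$, identity elsewhere: there $g(0)=g(1)=1$ and $e_1g(0)=2\neq f(0)$. In fact $f=e_0e_1=\cc\,\cc^{[1]}$, so the factor to split off on the left is the lowest one, not the highest.

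So, to be self-contained, you must use your fallback. It works once the collapse is applied first; as written, your composite has the factors in the wrong order. Let $a_1<\dots<a_r$ be the image of $f$ on the chain. Let $p$ be the product of $d_i$'s that collapses $f^{-1}[a_1],\dots,f^{-1}[a_r]$ onto the $r$ lowest points, block by block from the bottom. Let $u$ be the product of $e_i$'s that pushes the $i$-th lowest point to $a_i$, starting with the top one. Then $f=u\circ p$. Alternatively, cite the classical result of Aizen\v{s}tat and Howie: the singular part of $\mathcal O_m$ is generated by its $2m-2$ idempotents of defect one, which are exactly your $e_i,d_i$.

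With that repair, your argument is correct but takes a different route from the paper. The paper never factors arbitrary monotone maps of a finite chain. It writes $f=(f\jn 1)(f^\ell\jn 1)^r$, so only positive elements need factoring; the negative part is the residual of a product, and shifts of $\cc^\ell$ are never needed as generators. For a positive $g$ with support in $[0,t-s]$, it gives the explicit product $g=g_{k_1}\cdots g_{k_r}$. Here $g_k=\cc^{[k-1]}\cdots\cc^{[l_k-1]}$ collapses $[\min g^{-1}[k],k]$ onto $k$, and the highest block acts first — exactly the ordering your left-peeling of the top block reverses.

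Your approach treats upward and downward moves symmetrically and covers all maps at once. The price is either importing a semigroup-theoretic theorem or writing out the two-stage factorization. The paper's positive/negative split makes the combinatorics almost trivial, since a positive map needs only upward moves.
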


\begin{proof}
    It suffices to show that $\cc$ generates each positive element of $\m{F}_\mathrm{fs}(\ZZ)$ (i.e., elements $h\geq 1$), since for each $f\in \mathrm{F}_\mathrm{fs}(\ZZ)$ we have $f = (f \jn 1)(f \mt 1)=(f \jn 1)(f^\ell \jn 1)^r$. 
    So, let $f$ be a positive element in $ \mathrm{F}_\mathrm{fs}(\ZZ)$; since the support of $f$ is finite, it is contained in an interval $[s,t]$, where $s,t \in \ZZ$. 
    So, the support of $g:=f^{[-s]}$ is contained in the interval $[0,t-s]$ and $g$ is also positive.

    We denote by $\{k_1 <k_2< \ldots< k_{r}\}$ the set of all $k$ 
    that are in the image of $g$ and  $g^{-1}[k]\neq \{k\}$.
    For every such $k$,  we set $l_k=\min g^{-1}[k]+1$ and note that, because $g$ is positive, we have $\min g^{-1}[k]<k$, so $l_k\leq k$; we define the product $g_k:=c_{k}\cdots c_{l_k}$, where $c_i=\cc^{[i-1]}$, i.e., $c_i(i-1)=i$ and $c_i$ is the identity on all other inputs; see Figure~\ref{fig:c-generatese-fs}. 
    We will verify that $g=g_{k_1}g_{k_2}\cdots g_{k_r}$, where the empty product is the identity function; it is enough to check this equality on $[0,t-s]$, 
    as outside the interval both functions behave like the identity. 
    Indeed, if $g(x)=y$ for some $x \in [0,t-s]$,  then in case $g^{-1}[y]= \{y\}$, we get $y=x$ and $g_{k_1}g_{k_2}\cdots g_{k_r}(x)=x=y$; 
    in case $g^{-1}[y] \neq \{y\}$, then $y=k_i$, for some $i$ and $x \in g^{-1}[k_i]$. 
    Also $g_{k_i}(x)=k_i$, while $g_{k_j}(x)=x$, for $j \neq i$. 
    So, $g_{k_1}g_{k_2}\cdots g_{k_r}(x)=g_{k_1}g_{k_2}\cdots g_{k_i}(x)=g_{k_1}g_{k_2}\cdots g_{k_{i-1}}(k_i)=k_i=y$, because $ g_{k_j}(k_i)=k_i$ for $j<i$. 
    Therefore, $g \in \langle \cc \rangle$, hence $f=g^{[s]} \in \langle \cc \rangle$.
\end{proof}

\begin{figure}
    \centering
    \begin{tikzpicture}[scale=0.58]
\node[fill,draw,circle,scale=0.275](9) at (0,9){};
\node[left](9.1,0) at (9){\small$4$};
\node[fill,draw,circle,scale=0.275](9r) at (2,9){};

\node[fill,draw,circle,scale=0.275](8) at (0,8){};
\node[left](8.1,0) at (8){\small$3$};
\node[fill,draw,circle,scale=0.275](8r) at (2,8){};

\node[fill,draw,circle,scale=0.275](7) at (0,7){};
\node[left](7.1,0) at (7){\small$2$};
\node[fill,draw,circle,scale=0.275](7r) at (2,7){};

\node[fill,draw,circle,scale=0.275](6) at (0,6){};
\node[left](6.1,0) at (6){\small$1$};
\node[fill,draw,circle,scale=0.275](6r) at (2,6){};

\node[fill,draw,circle,scale=0.275](5) at (0,5){};
\node[left](9.1,0) at (5){\small$0$};
\node[fill,draw,circle,scale=0.275](5r) at (2,5){};

\node[fill,draw,circle,scale=0.275](4) at (0,4){};
\node[left](8.1,0) at (4){\small$-1$};
\node[fill,draw,circle,scale=0.275](4r) at (2,4){};

\node[fill,draw,circle,scale=0.275](3) at (0,3){};
\node[left](3.1,0) at (3){\small$-2$};
\node[fill,draw,circle,scale=0.275](3r) at (2,3){};

\node[fill,draw,circle,scale=0.275](2) at (0,2){};
\node[left](2.1,0) at (2){\small$-3$};
\node[fill,draw,circle,scale=0.275](2r) at (2,2){};

\node at (9)[above=3pt]{$\vdots$};
\node at (9r)[above=3pt]{$\vdots$};
\node at (2)[below=-1pt]{$\vdots$};
\node at (2r)[below=-1pt]{$\vdots$};
\draw[-](9)--(9r);
\draw[-](8)--(8r);
\draw[-](7)--(7r);
\draw[-](6)--(7r);
\draw[-](5)--(6r);
\draw[-](4)--(5r);
\draw[-](3)--(5r);
\draw[-](2)--(2r);
\node at (1,0.32){$g$};
\end{tikzpicture}
\quad
\begin{tikzpicture}[scale=0.58]
\node[fill,draw,circle,scale=0.275](9) at (0,9){};
\node[left](9.1,0) at (9){\small$4$};
\node[fill,draw,circle,scale=0.275](9r) at (2,9){};
\node[fill,draw,circle,scale=0.275](9rr) at (4,9){};
\node[fill,draw,circle,scale=0.275](9rrr) at (6,9){};
\node[fill,draw,circle,scale=0.275](9rrrr) at (8,9){};

\node[fill,draw,circle,scale=0.275](8) at (0,8){};
\node[left](8.1,0) at (8){\small$3$};
\node[fill,draw,circle,scale=0.275](8r) at (2,8){};
\node[fill,draw,circle,scale=0.275](8rr) at (4,8){};
\node[fill,draw,circle,scale=0.275](8rrr) at (6,8){};
\node[fill,draw,circle,scale=0.275](8rrrr) at (8,8){};

\node[fill,draw,circle,scale=0.275](7) at (0,7){};
\node[left](7.1,0) at (7){\small$2$};
\node[fill,draw,circle,scale=0.275](7r) at (2,7){};
\node[fill,draw,circle,scale=0.275](7rr) at (4,7){};
\node[fill,draw,circle,scale=0.275](7rrr) at (6,7){};
\node[fill,draw,circle,scale=0.275](7rrrr) at (8,7){};

\node[fill,draw,circle,scale=0.275](6) at (0,6){};
\node[left](6.1,0) at (6){\small$1$};
\node[fill,draw,circle,scale=0.275](6r) at (2,6){};
\node[fill,draw,circle,scale=0.275](6rr) at (4,6){};
\node[fill,draw,circle,scale=0.275](6rrr) at (6,6){};
\node[fill,draw,circle,scale=0.275](6rrrr) at (8,6){};

\node[fill,draw,circle,scale=0.275](5) at (0,5){};
\node[left](9.1,0) at (5){\small$0$};
\node[fill,draw,circle,scale=0.275](5r) at (2,5){};
\node[fill,draw,circle,scale=0.275](5rr) at (4,5){};
\node[fill,draw,circle,scale=0.275](5rrr) at (6,5){};
\node[fill,draw,circle,scale=0.275](5rrrr) at (8,5){};

\node[fill,draw,circle,scale=0.275](4) at (0,4){};
\node[left](8.1,0) at (4){\small$-1$};
\node[fill,draw,circle,scale=0.275](4r) at (2,4){};
\node[fill,draw,circle,scale=0.275](4rr) at (4,4){};
\node[fill,draw,circle,scale=0.275](4rrr) at (6,4){};
\node[fill,draw,circle,scale=0.275](4rrrr) at (8,4){};

\node[fill,draw,circle,scale=0.275](3) at (0,3){};
\node[left](3.1,0) at (3){\small$-2$};
\node[fill,draw,circle,scale=0.275](3r) at (2,3){};
\node[fill,draw,circle,scale=0.275](3rr) at (4,3){};
\node[fill,draw,circle,scale=0.275](3rrr) at (6,3){};
\node[fill,draw,circle,scale=0.275](3rrrr) at (8,3){};

\node[fill,draw,circle,scale=0.275](2) at (0,2){};
\node[left](2.1,0) at (2){\small$-3$};
\node[fill,draw,circle,scale=0.275](2r) at (2,2){};
\node[fill,draw,circle,scale=0.275](2rr) at (4,2){};
\node[fill,draw,circle,scale=0.275](2rrr) at (6,2){};
\node[fill,draw,circle,scale=0.275](2rrrr) at (8,2){};

\node at (9)[above=3pt]{$\vdots$};
\node at (9r)[above=3pt]{$\vdots$};
\node at (9rr)[above=3pt]{$\vdots$};
\node at (9rrr)[above=3pt]{$\vdots$};
\node at (9rrrr)[above=3pt]{$\vdots$};

\node at (2)[below=-1pt]{$\vdots$};
\node at (2r)[below=-1pt]{$\vdots$};
\node at (2rr)[below=-1pt]{$\vdots$};
\node at (2rrr)[below=-1pt]{$\vdots$};
\node at (2rrrr)[below=-1pt]{$\vdots$};
\draw[-](9)--(9r);
\draw[-](8)--(8r);
\draw[-](7)--(7r);
\draw[-](6)--(7r);
\draw[-](5)--(5r);
\draw[-](4)--(4r);
\draw[-](3)--(3r);
\draw[-](2)--(2r);

\draw[-](9r)--(9rr);
\draw[-](8r)--(8rr);
\draw[-](7r)--(7rr);
\draw[-](6r)--(6rr);
\draw[-](5r)--(6rr);
\draw[-](4r)--(4rr);
\draw[-](3r)--(3rr);
\draw[-](2r)--(2rr);

\draw[-](9rr)--(9rrr);
\draw[-](8rr)--(8rrr);
\draw[-](7rr)--(7rrr);
\draw[-](6rr)--(6rrr);
\draw[-](5rr)--(5rrr);
\draw[-](4rr)--(4rrr);
\draw[-](3rr)--(4rrr);
\draw[-](2rr)--(2rrr);

\draw[-](9rrr)--(9rrrr);
\draw[-](8rrr)--(8rrrr);
\draw[-](7rrr)--(7rrrr);
\draw[-](6rrr)--(6rrrr);
\draw[-](5rrr)--(5rrrr);
\draw[-](4rrr)--(5rrrr);
\draw[-](3rrr)--(3rrrr);
\draw[-](2rrr)--(2rrrr);

\node at (1,0.4){$\cn{c}^{[1]}$};
\node at (3,0.25){$\cn{c}$};
\node at (5,0.4){$\cn{c}^{[-2]}$};
\node at (7,0.4){$\cn{c}^{[-1]}$};
\end{tikzpicture}
    \caption{Example for the proof of Lemma~\ref{l:Ffs-gen}}
    \label{fig:c-generatese-fs}
\end{figure}

\begin{proposition}
\label{p:Fn-LPn-gen} The following facts hold. 
\hfill
    \begin{enumerate}[label = \textup{(\roman*)}]
        \item Each non-identity element of $\mathrm{F}(\ZZ)$ with periodicity $n$  generates $\m{F}_n(\ZZ)$. 
        \item For each infinite $S\subseteq \ZZ^+$, $\bigvee_{n\in S}\vr(\m{F}_n(\ZZ)) = \va{DLP}$. 
    \end{enumerate}
\end{proposition}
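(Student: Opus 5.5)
For (ii) I would embed $\m F_\mathrm{fs}(\ZZ)$ into an ultraproduct of the algebras $\m F_n(\ZZ)$, $n\in S$, and then invoke Proposition~\ref{p:DLP-gen-fs}. For (i) I would first generate one specific "atomic" $n$-periodic element, and then show that this element generates all of $\m F_n(\ZZ)$.

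\emph{Part (ii).} For $f\in \mathrm F_\mathrm{fs}(\ZZ)$ with support in $[-m,m]$ and $n>2m+1$, let $\hat f_n$ be the $n$-periodization of $f$, namely $\hat f_n(x+kn)=f(x)+kn$ for $x\in[-\lfloor n/2\rfloor,\lfloor n/2\rfloor]$. This is well defined for $f$ of finite support once $n$ is large, and it lies in $\mathrm F_n(\ZZ)$.
\begin{itemize}
\item Fix a nonprincipal ultrafilter $\mathcal U$ on the infinite set $S$.
\item Define $\Phi(f)=[(\hat f_n)_{n\in S}]_{\mathcal U}$, using arbitrary entries for the finitely many small $n$.
\item For each fixed pair $f,g$, once $n$ exceeds a bound depending on their supports, the operations commute with periodization: $\widehat{fg}_n=\hat f_n\hat g_n$, the meet and join are preserved since they are computed pointwise on a fundamental window, and $(\hat f_n)^\ell=\widehat{f^\ell}_n$ and $(\hat f_n)^r=\widehat{f^r}_n$. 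The residuals are determined locally because everything is the identity near the window boundary.
\item Since cofinitely many $n$ belong to $\mathcal U$, $\Phi$ is a homomorphism. It is injective because $f\neq g$ forces $\hat f_n\neq \hat g_n$ for large $n$.
\end{itemize}
Hence $\m F_\mathrm{fs}(\ZZ)\in \mathbb{ISP}_U(\{\m F_n(\ZZ):n\in S\})$, so $\va{DLP}=\vr(\m F_\mathrm{fs}(\ZZ))\subseteq\bigvee_{n\in S}\vr(\m F_n(\ZZ))$. The reverse inclusion is trivial.

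\emph{Part (i).} Let $f\neq 1$ have periodicity $n$. Then $f\jn 1$ and $f\mt 1$ are $n$-periodic, and at least one of them is not the identity. Using $f\mt 1=((f\mt1)^{\ell}\jn 1)^r$, as in the proof of Lemma~\ref{l:Ffs-gen}, we may assume $f>1$. Replacing $f$ by a conjugate $f^{[k]}$, we may also assume $f(0)>0$.

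The goal is to extract from $\langle f\rangle$ the element $\cc_n$, the $n$-periodization of $\cc$: it maps $kn\mapsto kn+1$ and fixes everything else. I would try meets of the form $f\mt f^{[1]}\mt\cdots$ with shifted copies, and products with $f^{\ell\ell}$-type inverses such as $f^{r}$ composed with $f$ on suitable sides. The aim is to reduce the height of $f$ to $1$ and to kill all points except one residue class modulo $n$. Roughly, I would take $h=f\mt (f^{[1]})\mt\cdots$ and then use $(h^{r})h$-type products and minimal-jump tricks. This localization is the step I expect to be the main obstacle. The difficulty is that $f$ may be everywhere nonidentity, and periodicity exactly $n$ must be used so that the resulting element is not $d$-periodic for some proper divisor $d$ of $n$.

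Once $\cc_n$ is obtained, I would repeat the proof of Lemma~\ref{l:Ffs-gen} within one period. Every positive $g\in\mathrm F_n(\ZZ)$ whose fixed-point set meets each period is a product of the conjugates $\cc_n^{[i]}$. I would also generate the shift $x\mapsto x+1$, for instance as a product like $\cc_n^{[n-1]}\cdots\cc_n^{[0]}$ combined with residuals, or via $(\cc_n\cdots)^{\ell}$ manipulations. Every positive $n$-periodic $g$ is then a shift power composed with an element of the first kind. Finally, the decomposition $g=(g\jn1)(g\mt1)$ gives all of $\m F_n(\ZZ)$.
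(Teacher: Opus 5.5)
\paragraph{Part (ii).} Your argument is correct and takes a different route from the paper, which only cites \cite{GG2} and Remark~6.16 of \cite{GS}. Periodizing finite-support maps commutes with all the operations once $n$ is large relative to the finitely many elements involved. So any equation failing in $\m{F}_{\mathrm{fs}}(\ZZ)$ fails in $\m{F}_n(\ZZ)$ for all sufficiently large $n$, hence for some $n\in S$, and Proposition~\ref{p:DLP-gen-fs} finishes the job. This gives a self-contained proof of (ii) from Proposition~\ref{p:DLP-gen-fs}; the ultraproduct is just a convenient packaging of that observation.

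\paragraph{Part (i).} Here there is a genuine gap. The step you flag yourself, extracting the periodized atom $\cc_n$ from $f$, is exactly what the paper imports from Theorem~3.18 of \cite{GS}. ``Meets of shifted copies, $(h^r)h$-type products and minimal-jump tricks'' is a direction, not an argument. Your preliminary reduction to $f>1$ is also invalid: $f\jn 1$ and $f\mt 1$ are $n$-periodic, but their periodicities can be proper divisors of $n$. For example, let $f(10k)=10k+1$ and $f(15k+2)=15k+1$ for $k\in\ZZ$, and $f(x)=x$ otherwise. This is an order-preserving finite-to-one map with $\per(f\jn 1)=10$, $\per(f\mt 1)=\per(f^\ell\jn 1)=15$ and $\per(f)=30$. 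So $\langle f\jn 1\rangle\subseteq \m{F}_{10}(\ZZ)$ is never $\m{F}_{30}(\ZZ)$.

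This particular point can be repaired: treat $f\jn 1$ and $f^\ell\jn 1$ separately and use $\cc_{d_1}\mt\cc_{d_2}=\cc_{\lcm(d_1,d_2)}$. But the same loss of periodicity recurs inside the localization itself. For the closure operator $e$ with fixed-point set $\{0,2,3\}+6\ZZ$, the height-one element $\slt(e)$ of Lemma~\ref{l:pos-idem-slope1} is only $3$-periodic.

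Note also that (i) needs $f$ to be non-invertible, i.e.\ $n\geq 2$. The map $x\mapsto x+2$ has periodicity $1$ but generates only the even translations, so $\cc_1$ (the map $x\mapsto x+1$) cannot be extracted from it. Any correct localization must therefore exploit non-invertibility, for instance through the idempotents $ff^\ell$ and $f^\ell f$, whose periodicities have least common multiple $\per(f)$. Once $\cc_n$ is available, your remaining steps work, with $x\mapsto x+1$ obtained as $\bigvee_{i<n}\cc_n^{[i]}$ rather than via residuals.
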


\begin{proof}
(i) follows from Theorem~3.18 of \cite{GS} and (ii) is implicit in \cite{GG2}, as detailed in Remark~6.16 of \cite{GS}. 
\end{proof}

As  discussed in the introduction, every $n$-periodic $\ell$-pregroup embeds into a wreath product $\m{Aut}(\m{J}) \wr \m{F}_n(\ZZ)$, where $\m J$ is a chain. 
The notion of a wreath product of an $\ell$-group and an $\ell$-pregroup is  given in \cite{GG2}; here we give the definition for the special case where the $\ell$-group is an automorphism $\ell$-group.

For a chain $\m{J}$ and an $\ell$-pregroup $\m{L}$ we define the \emph{wreath product} $\m{Aut}(\m{J}) \wr \m{L} = ( \mathrm{Aut}(\m{J}) \times {L}^J, \mt, \jn,  \cdot, (id, \overline{1}), {}^{\ell}, {}^r )$, where we write $\overline{1}$ for the sequence $(1)_{i\in J}$, and for $f = (\widetilde{f}, \overline{f}), g = (\widetilde{g},\overline{g}) \in \mathrm{Aut}(\m{J}) \times {L}^J$ we let 
\begin{align*}
    (\widetilde{f},\overline{f}) \cdot (\widetilde{g},\overline{g}) &= (\widetilde{f}\circ \widetilde{g}, (\overline{f}\otimes \widetilde{g}) \cdot \overline{g}) \\
    (\widetilde{f},\overline{f})^\ell &= (\widetilde{f}^{-1}, \overline{f}^\ell \otimes \widetilde{f}^{-1}) \\
    (\widetilde{f},\overline{f})^r &= (\widetilde{f}^{-1}, \overline{f}^r \otimes \widetilde{f}^{-1}),
\end{align*}
where for $h\in \mathrm{Aut}(\m{J})$ and  $\overline{a}\in {L}^{J}$ we define the element  $\overline{a} \otimes h \in {L}^{J}$ by $(\overline{a} \otimes h)_i = \overline{a}_{h(i)}$; the order of $\m{Aut}(\m{J}) \wr \m{L}$ is defined by 
\[
(\widetilde{f},\overline{f}) \leq (\widetilde{g},\overline{g}) \quad \text{iff} \quad \widetilde{f} \leq \widetilde{g} \quad \text{and} \quad \widetilde{f}(i) = \widetilde{g}(i) \implies \overline{f}_i \leq \overline{g}_i, \text{ for all } i\in J.
\]
In particular, we have $\widetilde{f \mt g} = \widetilde{f} \mt \widetilde{g}$, $\widetilde{f \jn g} = \widetilde{f} \jn \widetilde{g}$, and for each $i\in J$
\[
\overline{(f \mt g)}_i = 
\begin{cases}
    \overline{f}_i &\text{if } \widetilde{f}(i) \leq \widetilde{g}(i) \\
    \overline{g}_i &\text{if } \widetilde{f}(i) \geq \widetilde{g}(i) \\
    \overline{f}_i \mt \overline{g}_i &\text{if } \widetilde{f}(i) = \widetilde{g}(i).
\end{cases} 
\quad
\overline{(f\jn g)}_i =
\begin{cases}
    \overline{f}_i &\text{if } \widetilde{f}(i) \geq \widetilde{g}(i) \\
    \overline{g}_i &\text{if } \widetilde{f}(i) \leq \widetilde{g}(i) \\
    \overline{f}_i \jn \overline{g}_i &\text{if } \widetilde{f}(i) = \widetilde{g}(i).
\end{cases} 
\]
For $f =(\widetilde{f},\overline{f}) \in \mathrm{Aut}(\m{J}) \times L^J$ we call $\widetilde{f}$ the \emph{global component} and $\overline{f}$ the \emph{local component} of $f$.

In \cite{GG2} it is shown that if $\m{J}$ is a chain and $\m{L}$ is an $\ell$-pregroup, then $\m{Aut}(\m{J})\wr \m{L}$ is an $\ell$-pregroup; if, in addition, $\m{L}$ is $n$-periodic, then so is $\m{Aut}(\m{J})\wr \m{L}$, for each $n \in \ZZ^+$.

Furthermore,  $\m{F}_n(\m{J}\overrightarrow{\times} \ZZ)$ is isomorphic to the wreath product $\m{Aut}(\m{J}) \wr \m{F}_n(\ZZ)$ and for $f\in \mathrm{F}_n(\m{J}\overrightarrow{\times} \ZZ)$ and corresponding  $(\widetilde{f},\overline{f}) \in \mathrm{Aut}(\m{J}) \times \mathrm{F}_n(\ZZ)^J$, we have that for all $(i,m) \in J\times \ZZ$: 
    \[
    f(i,m) = (\widetilde{f}(i), \overline{f}_i(m)).
    \]
For $f\in \mathrm{F}_n(\m{J}\overrightarrow{\times} \ZZ)$ and   $(\widetilde{f},\overline{f}) \in \mathrm{Aut}(\m{J}) \times \mathrm{F}_n(\ZZ)^J$ we will be writing $f \equiv (\widetilde{f},\overline{f})$ to indicate that the two elements correspond. 
Note that if $\m{L} \leq \m{F}_n(\m{J}\overrightarrow{\times} \ZZ)$, where $\m J$ is a chain, the set $\loc{L} = \{f \in L : \widetilde{f} = id \}$ of \emph{local} elements of $\m L$ forms a subalgebra $\loc{\m{L}}$, which we call  the \emph{local subalgebra} of $\m L$; where we note that $\loc{\m{L}}$ depends on the representation.

An $\ell$-pregroup is called \emph{properly} $n$-periodic if it is $n$-periodic and not $m$-periodic for any $m<n$.

\begin{proposition}\label{prop:period-subalg}
    If $\m{L}$ is a properly $n$-periodic $\ell$-pregroup, and  $p$ is a prime  with $p^k \mid n$, where $k \in \ZZ^+$, then $\m{L}$ contains a positive idempotent of periodicity divisible by $p^k$ and $\m{F}_{p^k}(\ZZ) \in \vr(\m{L})$. 
\end{proposition}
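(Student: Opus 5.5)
We will pass to a concrete representation, locate a suitable positive element there, turn it into an idempotent, and then apply Proposition~\ref{p:Fn-LPn-gen}(i) to get the variety statement.

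\emph{Setting up.} Since $\m L$ is $n$-periodic, we may assume $\m L \leq \m F_n(\m J\overrightarrow{\times}\ZZ) \cong \m{Aut}(\m J)\wr \m F_n(\ZZ)$ for some chain $\m J$. Since $\m L$ is not $n/p$-periodic, some $a\in L$ has $\per(a)\nmid n/p$. Periodicity of $a$ always divides $n$, so $p^k \mid \per(a)$.

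\emph{Reducing to a positive element.} Both $a\jn 1$ and $a\mt 1$ are periodic with periods dividing $n$. Their product is $a$, and $(a\mt 1) = (a^\ell\jn 1)^r$, where $^\ell,{}^r$ preserve periodicity. Hence if both $a\jn 1$ and $a^\ell \jn 1$ had period not divisible by $p^k$, their periods would divide $n/p$ and so would $\per(a)$. So we obtain a positive element $b\geq 1$ in $L$ with $p^k\mid \per(b)$.

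\emph{Producing an idempotent.} We take the positive idempotent $e = b\,b^{[1]}b^{[2]}\cdots b^{[n-1]}$ or a power of $b$; the natural candidate is the join of $b^m$ over $m$, realized as a finite term. The real issue is to produce an idempotent $e\in L$ whose periodicity is still divisible by $p^k$.

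First, every element $f$ of $\m F_n(\m J\overrightarrow{\times}\ZZ)$ satisfies $f \le f^{[n]}$-type bounds: positive $f$ moves each point by a bounded amount within its $\ZZ$-component. Within a fixed component, $\overline{b}_i$ is an $n$-periodic positive map on $\ZZ$, so it moves points by at most some bound $N$. Globally, a positive $b$ with nontrivial global part would not be bounded, so we handle it by working with a term that is idempotent in $\m F_n(\ZZ)$-style fibers. The candidate I would try is
\[
e = \bigl(b\, b^{[1]}\cdots b^{[p^k-1]}\bigr)^{M} \text{ for large } M,
\]
or, more robustly, the term $e=(b\mt b^{[1]})\cdots$. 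I expect the main obstacle to be exactly this step: ensuring the term stabilizes to an idempotent (possibly only after restricting to local components, using $\loc{\m L}$ and the fact that $f\mapsto f^{[k]}$ are automorphisms) and that its periodicity keeps the factor $p^k$. A way to enforce divisibility is to pick a point $x$ witnessing $b^{[n/p]}(x)\ne b(x)$ and build $e$ so that it fixes everything except a controlled jump near $x$ and its translates by multiples of $n$.

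\emph{Concluding $\m F_{p^k}(\ZZ)\in\vr(\m L)$.} Once we have an idempotent $e\ge1$ with $p^k\mid\per(e)$, restrict to a single component $i\in J$ where $\overline e_i$ is non-identity with periodicity $m$ divisible by $p^k$. The map $L\to \m F_n(\ZZ)$ given by $f\mapsto \overline f_i$ on the subalgebra of elements whose global part fixes $i$ is a homomorphism. Applying it to $\langle e\rangle$, we get a non-identity element of $\m F(\ZZ)$ of periodicity $m$. By Proposition~\ref{p:Fn-LPn-gen}(i), it generates $\m F_m(\ZZ)$, so $\m F_m(\ZZ)\in\vr(\m L)$ via $\mathsf{HS}$. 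Finally, $\m F_{p^k}(\ZZ)$ embeds into $\m F_m(\ZZ)$, since $p^k\mid m$ and $p^k$-periodic maps are $m$-periodic.
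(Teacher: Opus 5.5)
Your opening steps are sound: since $\m L$ is not $(n/p)$-periodic, some $a\in L$ has $p^k\mid\per(a)$, and the decomposition $a=(a\jn 1)(a^\ell\jn 1)^r$ lets you take such an element to be positive. The proof breaks down at ``Producing an idempotent''. You never carry this step out, and both conclusions depend on it.

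The candidates you list do not work. A positive periodic $b$ may have no fixed points (e.g.\ $b(2q)=2q+1$, $b(2q+1)=2q+3$ in $\m F_2(\ZZ)$). Then no product of powers and shifts of $b$ is idempotent, and $\bigvee_m b^m$ is neither a term nor an element. The one term that always yields a positive idempotent, $bb^\ell$, is the closure onto the image of $b$, and it can lose the factor $p^k$. For example, $b(4q)=b(4q+1)=b(4q+2)=4q+2$, $b(4q+3)=4q+4$ is positive with $\per(b)=4$, but $bb^\ell$ rounds up to the next even number, so $\per(bb^\ell)=2$.

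There is a second gap behind this: your $b$ need not be local. So the coordinate map $f\mapsto\overline f_i$ cannot be applied to $b$ or to terms in $b$. As you correctly note, that map is a homomorphism only on elements whose global part fixes $i$.

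The missing idea is to pass to a \emph{local} element first, find the idempotent downstairs, and lift it by the same term. The paper uses that $\loc{\m L}$ is again properly $n$-periodic (Remark~6.5 of [GS]). You could also see this directly: $\g_n(a)$ is invertible with the same global component as $a$, so $\g_n(a)^\ell a$ is local, and its $i$th coordinate is a translate of $\overline a_i$. Hence it has the same periodicity as $a$.

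Now take $a\in\loc{L}$ with $p^k\mid\per(a)=\lcm_i\per(\overline a_i)$, and pick $j$ with $p^k\mid m:=\per(\overline a_j)$. By Proposition~\ref{p:Fn-LPn-gen}(i), $\langle\overline a_j\rangle=\m F_m(\ZZ)$. This lies in $\vr(\m L)$ as a subalgebra of the $j$th projection of $\loc{\m L}$, and it contains $\m F_{p^k}(\ZZ)$. So the variety claim needs no idempotent at all.

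For the idempotent, choose a positive idempotent $d\in\m F_m(\ZZ)$ of periodicity $m$, write $d=t(\overline a_j)$, and set $e:=t(a)\,t(a)^\ell$. This $e$ is a positive idempotent (as $xx^\ell$ always is), it is local, and $\overline e_j=dd^\ell=d$. Hence $p^k\mid m\mid\per(e)$.
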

\begin{proof}
    By the representation theorem for periodic $\ell$-pregroups of \cite{GG2}, we may assume that $\m{L}\leq \m{F}_n(\m{J}\overrightarrow{\times}\ZZ)$ for some chain $\m{J}$. 
    By Remark~6.5 of \cite{GS}, also $\loc{\m{L}} \leq \m{F}_n(\ZZ)^J$ is properly $n$-periodic. Let $p$ be a prime with $p^k\mid n$. 
    Then there exists an element $a\in \loc{L}$ such that $p^k\mid \per(a)$,
    since $n =\lcm\{\per(b) : b\in \loc{L}\}$ because $\loc{\m{L}}$ is \emph{properly} $n$-periodic.
    Moreover, by Lemma~5.5 of \cite{GS}, we have $\per(a) =\per(\overline{a})= \lcm\{\per(\overline{a}_i) : i\in J\}$, so there exists a $j\in J$ such that $p^k \mid \per(\overline{a}_j)$. By Proposition~\ref{p:Fn-LPn-gen}(i), we have $\overline{a}_j \in \mathrm{F}_n(\ZZ)$, 
    so $\langle \overline{a}_j \rangle = \m{F}_m(\ZZ)$,
    where $m: = \per(\overline{a}_j)$. 
    
    For the first part of the statement,  given a positive idempotent $d$ of periodicity $m$ in $\langle \overline{a}_j \rangle = \m{F}_m(\ZZ)$, there exists a term $t$ such that $d=t(\overline{a}_j)$; so for $b:=t(a)$, we have
    $b\in \langle a \rangle$ and $\overline{b}_j=t(\overline{a}_j)=d$ is a positive idempotent of $\m{F}_m(\ZZ)$ of periodicity $m$; so $bb^\ell$ is a positive idempotent of $\m{L}$ with  $\overline{(bb^\ell)}_j=d d^\ell=d$, and
    \[
    p^k \mid m=\per(d)=\per(\overline{(bb^\ell)}_j) \mid \lcm\{\per(\overline{(bb^\ell)}_i) : i\in J\}=\per(bb^\ell).
    \]
    Now for the second part, since $p^k \mid m$  we have $\m{F}_{p^k}(\ZZ) \leq \m{F}_m(\ZZ)$; moreover, $\langle \overline{a}_j \rangle = \m{F}_m(\ZZ)$ is a subalgebra of the image of the $j$th projection of $\loc{\m{L}}\leq \m L$, thus $\m{F}_{p^k(\ZZ)} \in \vr(\m{L})$. 
\end{proof}

\begin{corollary}\label{cor:per-gen-DLP}
   If\/ $\m{L}$ is a distributive $\ell$-pregroup that contains elements of arbitrarily large periodicity, then it contains positive idempotent elements of arbitrarily large periodicity and $\vr(\m{L}) = \mathsf{DLP}$.
\end{corollary}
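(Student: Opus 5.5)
Let $\m L$ be a distributive $\ell$-pregroup containing elements of arbitrarily large periodicity. The plan is to reduce to Proposition~\ref{prop:period-subalg} by passing to one-generated (hence periodic) subalgebras, and then to use Proposition~\ref{p:Fn-LPn-gen}(ii).

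First, take an element $a\in L$ of finite periodicity $n$. Then $\langle a\rangle$ is properly $n$-periodic. It is $n$-periodic because the $n$-periodic elements form a subalgebra. It is not $m$-periodic for any $m<n$, since $a$ itself is not. The hypothesis "arbitrarily large periodicity" presumably refers to elements of finite periodicity. If $\m L$ instead contains an element $a$ with $\per(a)=\infty$, it still has elements of every large finite periodicity by assumption, so we may restrict to finite ones.

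Next, fix a bound $N$. Choose $a$ with $N<\per(a)=n<\infty$. We would like a prime power $p^k\mid n$ with $p^k$ large, but $n$ can be large while all its prime power factors stay small, e.g.\ products of distinct primes. Proposition~\ref{prop:period-subalg} only gives, for each $p^k\mid n$, a positive idempotent of periodicity divisible by $p^k$. We fix this with an lcm argument, applied inside $\langle a\rangle$, which is properly $n$-periodic. Write $n=p_1^{k_1}\cdots p_s^{k_s}$. The proposition gives positive idempotents $e_1,\dots,e_s\in\langle a\rangle$ with $p_i^{k_i}\mid\per(e_i)$. Set $e=e_1\jn\cdots\jn e_s$, which is again a positive idempotent. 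Showing that $\per(e)$ is divisible by every $p_i^{k_i}$, hence equal to $n$, is the main obstacle. For that, represent $\langle a\rangle\leq\m F_n(\m J\overrightarrow{\times}\ZZ)$ and work in the local subalgebra as in the proof of Proposition~\ref{prop:period-subalg}. That proof in fact produces each $e_i$ as $b_ib_i^\ell$ with $b_i\in\langle a\rangle$ local.

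A simpler route avoids joins. Each $e_i$ has periodicity divisible by $p_i^{k_i}$ and bounded by $n$, so $\max_i \per(e_i)\geq\max_i p_i^{k_i}$. This is still not large in general, so I would instead argue with varieties and handle the idempotent claim separately. Proposition~\ref{prop:period-subalg} gives $\m F_{p^k}(\ZZ)\in\vr(\m L)$ for every prime power dividing some periodicity. If the set $S$ of such prime powers is infinite, Proposition~\ref{p:Fn-LPn-gen}(ii) gives $\vr(\m L)=\mathsf{DLP}$.

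The set $S$ is indeed infinite. If it were finite, every finite periodicity would divide $\prod_{q\in S}q$. Indeed, each prime power dividing a periodicity lies in $S$, so the exponent of each prime is bounded and only finitely many primes occur. This contradicts unboundedness.

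For arbitrarily large idempotent periodicity, the same observation gives prime powers $p^k$ dividing periodicities of elements with $p^k$ arbitrarily large. Since $S$ is infinite, it contains arbitrarily large prime powers. For each such $p^k\mid\per(a)$, Proposition~\ref{prop:period-subalg} applied to $\langle a\rangle$ yields a positive idempotent of periodicity divisible by $p^k$, hence at least $p^k$. This completes the argument, so the join construction above is not needed.
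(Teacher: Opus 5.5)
Your final argument is correct and is essentially the paper's proof. The set of prime powers dividing the periodicities must be infinite. Proposition~\ref{prop:period-subalg}, applied to the properly $\per(a)$-periodic subalgebras $\langle a\rangle$, yields positive idempotents of periodicity divisible by each such prime power $q$ together with $\m F_{q}(\ZZ)\in\vr(\m L)$, and Proposition~\ref{p:Fn-LPn-gen}(ii) finishes; the abandoned join construction in your middle paragraphs can simply be deleted.
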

\begin{proof}
   If $\m{L}$ is a distributive $\ell$-pregroup that contains elements of arbitrarily big periodicity, then the set $Q$ of prime powers that divide these periodicities must be infinite.  
   By Proposition~\ref{prop:period-subalg} applied to the respective periodic subalgebras of $\m{L}$,  for each  $q \in Q$, $\m{L}$ contains positive idempotents of periodicity divisible by $q$ (hence the first claim follows) and  $\m{F}_{q}(\ZZ) \in \vr(\m{L})$. 
   By Proposition~\ref{p:Fn-LPn-gen}(ii), $\mathsf{DLP} = \bigvee_{q\in P} \vr(\m{F}_{q}(\ZZ)) \subseteq \vr(\m{L}) \subseteq \mathsf{DLP}$.
\end{proof}

\section{Composite functions and wreath products}\label{s: composite}

Recall that for $f\in \mathrm{F}_n(\m{J} \overrightarrow{\times} \ZZ)$, where $n \in \ZZ^+$ and $\m J$ is a chain, the global component of $f$ is an element of $\mathrm{Aut}(\m J)$. 
The analogue of this notion outside the $n$-periodic settings does not always yield an automorphism of $\m J$, and actually not even a function on $\m J$.

For $f\in \mathrm{F}(\m{J} \overrightarrow{\times} \ZZ)$ we define the following relation on $J$:
\[
\widetilde{f} := \{(i,j) : f(i,m) = (j,p) \text{ for some } m,p \in \ZZ\}.
\]
We call a map $f\in \mathrm{F}(\m{J}\overrightarrow{\times} \ZZ)$ \emph{composite} if $\widetilde{f}$ is a bijective map (in which case it is actually an automorphism of $\m J$) and \emph{local} if $\widetilde{f}= id_J$. 
Note that if $f$ is local, then there is a unique $\overline{f} \in \mathrm{F}(\ZZ)^J$ such that $f(i,k) = (i,\overline{f}_i(k))$ for all $i \in J$ and $k \in \ZZ$.  

\begin{remark}\label{r: idemptNOTcomposite}
We note that, in contrast to the $n$-periodic case (Lemma~5.6 of \cite{GS}), there are idempotent elements of $\mathrm{F}(\m{J} \overrightarrow{\times} \ZZ)$ that are not local. 
For example, we define the function $f$ on $\ZZ\overrightarrow{\times} \ZZ$ that maps the interval $[(2n,0), (2n+2, -1)]$ to the point $(2n+1,0)$, for all $n$. In this case $f$ is in $\mathrm{F}(\ZZ \overrightarrow{\times} \ZZ)$, as can be easily verified,  and  is idempotent, but it is not local as $\widetilde{f}$ is not even a function.  
\end{remark}

\begin{lemma}[\cite{GG2}]\label{l: periodic-is-composite}
Every periodic element of\/ $\mathrm{F}(\m{J}\overrightarrow{\times} \ZZ)$ is composite.
\end{lemma}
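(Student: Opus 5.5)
The plan is to show directly that $\widetilde{f}$ is a bijection of $J$ whenever $f\in \mathrm{F}(\m J\overrightarrow{\times}\ZZ)$ satisfies $f^{[n]}=f$ for some $n\in\ZZ^+$. Here I use that $f^{\ell^{n}}=f^{r^{n}}$ gives $f^{\ell^{2n}}=f$ after applying $^{r^n}$ twice. By Lemma~2.3 of \cite{GG2}, $f^{[n]}(x)=f(x-n)+n$, so periodicity means
\[
f(i,m+n)=f(i,m)+n \quad\text{for all } (i,m)\in J\times\ZZ .
\]
In other words, $f$ commutes with the translation $x\mapsto x+n$, which preserves each block $\{i\}\times\ZZ$.

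\emph{Step 1: $\widetilde{f}$ is a function.} Suppose $f(i,m)=(j,p)$ and $f(i,m')=(j',p')$ with $m<m'$. Choose $q$ with $m'\le m+qn$. Monotonicity and periodicity give
\[
(j,p)\le(j',p')\le f(i,m+qn)=(j,p+qn).
\]
Hence $j'=j$, and $\widetilde{f}$ is a map $J\to J$. Monotonicity of $f$ makes $\widetilde{f}$ order-preserving.

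\emph{Step 2: $\widetilde{f}$ is injective.} If $\widetilde{f}(i)=\widetilde{f}(i')=j$ with $i<i'$, then every point $(k,m)$ with $i\le k\le i'$ lies between $(i,0)$ and $(i',0)$. So $f$ maps the infinite interval $[(i,0),(i',0)]$ into the block $\{j\}\times\ZZ$, between $f(i,0)$ and $f(i',0)$, which is a finite interval. Some point then has an infinite preimage. This contradicts the existence of $f^r$: since $f(x)\le y \iff x\le f^r(y)$, the preimage $f^{-1}[\{y\}]$ must be an interval bounded above by $f^r(y)$ and below by $f^\ell(y)$. Both residuals exist as elements of $\m J\overrightarrow{\times}\ZZ$, but that alone does not force finiteness, so I refine the argument. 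Every point of $\{i\}\times\ZZ$ at or above $(i,0)$ maps into a finite set, so by monotonicity $f(i,m)$ is eventually constant in $m$. This contradicts $f(i,m+n)=f(i,m)+n$. Hence injectivity holds; in fact this shows each block maps onto an unbounded set in its image block.

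\emph{Step 3: $\widetilde{f}$ is surjective.} This is the main obstacle. Given $j\in J$, consider the point $y=(j,0)$ and $x=f^{\ell}(y)=\min\{z: y\le f(z)\}$, say $x=(i,m)$. Then $f(i,m)\ge (j,0)$ and $f(i,m-1)<(j,0)$. Both values lie in blocks $\widetilde{f}(i)$, and these coincide by Step 1, so $\widetilde{f}(i)\ge j$. Using $f(i,m-1-qn)=f(i,m-1)-qn$, the block $\widetilde{f}(i)$ contains values arbitrarily far below, yet all of them stay below $(j,0)$. If $\widetilde{f}(i)>j$, then the value $f(i,m-1)<(j,0)$ would lie in a block above $j$, which is impossible. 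Hence $\widetilde{f}(i)=j$. Thus $\widetilde{f}$ is an order-preserving bijection of $J$, i.e.\ an automorphism, and $f$ is composite.
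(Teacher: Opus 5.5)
Your proof is correct. The paper gives no argument of its own here; it just quotes \cite{GG2}. What you provide is therefore a self-contained elementary proof, using only the translation identity $f(x+n)=f(x)+n$ and the formula $f^\ell(y)=\min\{z : y\le f(z)\}$.

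Within the paper's own toolkit there is a shortcut for two of your three steps. Periodicity makes each set $f[J\times\ZZ]\cap(\{j\}\times\ZZ)$ invariant under translation by $\pm n$, so it has neither a minimum nor a maximum. Lemma~\ref{l:minmax-surj-dichot} then gives that $\widetilde{f}$ is a surjective map, leaving only your injectivity argument.

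Conversely, your Step~3 proves more than the surjectivity half of that lemma. It never uses periodicity, and it shows that for any $f\in\mathrm{F}(\m{J}\overrightarrow{\times}\ZZ)$ for which $\widetilde{f}$ is a map, $\widetilde{f}$ is automatically surjective, without any hypothesis on minima. Consequently your sentence about values ``arbitrarily far below'' is superfluous.

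A few clean-ups for the write-up:
\begin{itemize}
\item Delete the abandoned first attempt in Step~2. An infinite fibre does not contradict the existence of residuals in $\mathrm{F}(\m{J}\overrightarrow{\times}\ZZ)$, as the element of Remark~\ref{r: idemptNOTcomposite} shows. The clean argument is that $(i,qn)<(i',0)$ for every $q$, while $f(i,qn)=f(i,0)+qn$ eventually exceeds $f(i',0)$ inside the block $\{j\}\times\ZZ$.
\item The identity $f^{\ell^{2n}}=f$ is obtained by applying ${}^{\ell^n}$ once to $f^{\ell^n}=f^{r^n}$, using $(a^{r})^{\ell}=a$, not by applying ${}^{r^n}$ twice.
\end{itemize}
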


For $\m{L} \leq \m{F}(\m{J}\overrightarrow{\times} \ZZ)$
we denote by  $\comp{L}$ the set of composite maps in $L$ and by $\loc{L}$ the set of local maps in $L$; observe that the notion of locality for $n$-periodic $\ell$-pregroups is a special case of this definition.
Also, note that $\loc{L} \subseteq \comp{L}$ and that $\comp{L}$ and $\loc{L}$ give rise to subalgebras $\comp{\m{L}}$ and $\loc{\m{L}}$ of $\m{L}$; if $\m L= \m{F}(\m{J}\overrightarrow{\times} \ZZ)$, we denote the corresponding subalgebras by $\comp{\m{F}}(\m{J}\overrightarrow{\times} \ZZ)$ and $\loc{\m{F}}(\m{J}\overrightarrow{\times}\ZZ)$. 
In particular, we get the following result. 

\begin{lemma}\label{l:comp-is-wreath}
If $\m{J}$ is a chain, then $\comp{\m{F}}(\m{J}\overrightarrow{\times} \ZZ) \cong \m{Aut}(\m{J}) \wr \m{F}(\ZZ)$ via the isomorphism
        \[
        \f\colon \m{Aut}(\m{J}) \wr \m{F}(\ZZ) \to \comp{\m{F}}(\m{J}\overrightarrow{\times} \ZZ), \f(\widetilde{f},\overline{f})(i,n) = (\widetilde{f}(i),\overline{f}_i(n)).
        \]
In particular $\varphi$ restricts to an isomorphism between $\{id\} \times \m{F}(\ZZ)^J$ and $\loc{\m{F}}(\m{J}\overrightarrow{\times}\ZZ)$, yielding that $\loc{\m{F}}(\m{J}\overrightarrow{\times}\ZZ) \cong \m{F}(\ZZ)^J$, and each $f\in\loc{\mathrm{F}}(\m{J}\overrightarrow{\times}\ZZ)$ is finite to one.
\end{lemma}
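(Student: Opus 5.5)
We verify directly that $\f$ is a well-defined bijection onto $\comp{\mathrm{F}}(\m{J}\overrightarrow{\times}\ZZ)$ and that it preserves all the operations. The underlying computations are those of \cite{GG2} for the $n$-periodic case, without using periodicity.

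\emph{Well-definedness.} Take $(\widetilde{f},\overline{f})\in \mathrm{Aut}(\m J)\times \mathrm{F}(\ZZ)^J$ and set $g=\f(\widetilde f,\overline f)$. The map $g$ is order preserving in the lexicographic order: $\widetilde f$ is strictly increasing, and on each fibre $\{i\}\times\ZZ$ the map acts by the order-preserving $\overline f_i$. Its residual is $\f(\widetilde f^{-1},\overline f^{r}\otimes\widetilde f^{-1})$. To check this, note that $g(i,m)\le(j,p)$ iff either $\widetilde f(i)<j$, or $\widetilde f(i)=j$ and $\overline f_i(m)\le p$. This is equivalent to either $i<\widetilde f^{-1}(j)$, or $i=\widetilde f^{-1}(j)$ and $m\le \overline f_{\widetilde f^{-1}(j)}^{\,r}(p)$. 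The dual residual is handled the same way. Since the wreath-product residuals have the same form, induction shows that residuals of all orders exist. Hence $g\in \mathrm{F}(\m{J}\overrightarrow{\times}\ZZ)$, and it is composite with global relation exactly $\widetilde f$.

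\emph{Bijectivity.} Injectivity is immediate from the formula for $\f$. For surjectivity, the key step is this: if $h\in \mathrm{F}(\m J\overrightarrow{\times}\ZZ)$ is composite with $\widetilde h=\sigma\in\mathrm{Aut}(\m J)$, then for each $i$ the restriction of $h$ to $\{i\}\times\ZZ$ lands in $\{\sigma(i)\}\times\ZZ$. It therefore induces an order-preserving map $\overline h_i\colon\ZZ\to\ZZ$, and we must show $\overline h_i\in\mathrm{F}(\ZZ)$, i.e.\ that it is finite-to-one with image unbounded in both directions. Suppose the preimage of $(\sigma(i),p)$ were infinite, say containing all $(i,m)$ with $m\ge m_0$. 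Then $h^{\ell}(\sigma(i),p+1)=\min\{y : (\sigma(i),p+1)\le h(y)\}$ would have to lie strictly above the whole ray $\{i\}\times[m_0,\infty)$. That ray has no least upper bound in the lexicographic product, and it is only ``reached'' by points of some $\{j\}\times\ZZ$ with $j>i$. But $\sigma$ is bijective, so those points map into $\{\sigma(j)\}\times\ZZ$ with $\sigma(j)>\sigma(i)$, and the required minimum cannot be attained. This is a contradiction, and symmetric arguments handle unbounded-below preimages. Unboundedness of the image of $\overline h_i$ follows by the same kind of reasoning, using that $\sigma$ is onto and that the residuals of $h$ exist. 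I expect this surjectivity/finite-to-one step to be the main obstacle, as it is the only place where compositeness is really used. Once $\overline h_i\in\mathrm{F}(\ZZ)$ is known, it has residuals of all orders, so $(\sigma,\overline h)$ lies in the wreath product and $h=\f(\sigma,\overline h)$.

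\emph{Homomorphism.} Preservation of products is a direct computation: $\f(f)\f(g)(i,n)=(\widetilde f\widetilde g(i),\overline f_{\widetilde g(i)}\overline g_i(n))$, which matches $(\overline f\otimes\widetilde g)\cdot\overline g$. The residual formulas were already obtained in the well-definedness step. Meets and joins are pointwise in the lexicographic order, and comparing $(\widetilde f(i),\overline f_i(n))$ with $(\widetilde g(i),\overline g_i(n))$ gives exactly the three-case description of $\overline{(f\mt g)}_i$ and $\overline{(f\jn g)}_i$ (equivalently, $\f$ is an order isomorphism for the order defined on the wreath product). The final claims follow at once: $\f$ maps $\{id\}\times\mathrm{F}(\ZZ)^J$ onto the local maps, this subset is a subalgebra isomorphic to $\m F(\ZZ)^J$ since all twisting by $\otimes$ disappears, and each local map is finite-to-one because every $\overline f_i$ is.
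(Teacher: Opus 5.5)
Your proof is correct. Its overall shape matches the paper's: check that $\f$ is an injective homomorphism, then prove surjectivity by extracting fibre maps. The key surjectivity step, however, is argued differently.

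\textbf{The paper's route.} It factors a composite $f$ as $f=\widehat f\,(\widehat f^{-1}f)$, where $\widehat f(i,k)=(\widetilde f(i),k)$. This $\widehat f$ is an order automorphism of $\m{J}\overrightarrow{\times}\ZZ$, hence an invertible element of $\mathrm{F}(\m{J}\overrightarrow{\times}\ZZ)$, so $\widehat f^{-1}f$ is local. The paper then reads off $\overline f$ from this local map. For that it uses the fact, stated only as a ``Note'' in Section~3, that the fibre restrictions of a local element lie in $\mathrm{F}(\ZZ)$.

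\textbf{Your route.} You work with the composite map $h$ directly. You show each fibre map $\overline h_i$ is finite-to-one, because an infinite fibre preimage would make $h^{\ell}$ or $h^{r}$ fail to exist; this is exactly where injectivity of $\widetilde h$ enters. You then invoke the description of $\mathrm{F}(\ZZ)$ as the order-preserving finite-to-one maps.

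\textbf{Comparison.}
\begin{itemize}
\item Your route makes explicit the one genuinely non-formal point that the paper passes over.
\item You also write out the residual computation showing that $\f$ lands in $\mathrm{F}(\m{J}\overrightarrow{\times}\ZZ)$, which the paper dismisses as straightforward.
\item The paper's route is shorter and conceptually cleaner: multiplying by an invertible element reduces the composite case to the local one.
\item The paper's implicit step can be closed just as directly. One checks that the residuals of a local map are again local, with components $\overline f_i^{\,r}$ and $\overline f_i^{\,\ell}$; this avoids the finite-to-one characterization altogether.
\end{itemize}

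\textbf{Two small remarks on your write-up.}
\begin{itemize}
\item The phrase ``the required minimum cannot be attained'' would be airtight if you said explicitly that $\{y : (\widetilde h(i),p+1)\le h(y)\}$ equals $\bigcup_{j>i}\{j\}\times\ZZ$, which has no least element.
\item Your separate claim about unboundedness of the image of $\overline h_i$ is unnecessary. An order-preserving finite-to-one map on $\ZZ$ automatically has image unbounded in both directions.
\end{itemize}
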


\begin{proof}
    It is straightforward to check that $\f$ is an injective homomorphism. So it remains to show that $\f$ is surjective. 
    For $f\in \comp{\mathrm{F}}(\m{J} \overrightarrow{\times} \ZZ)$ we define the map $\widehat{f} \in \comp{\mathrm{F}}(\m{J} \overrightarrow{\times} \ZZ)$, by $\widehat{f}(i,k) = (\widetilde{f}(i), k)$ and observe that it is an order-preserving bijection. 
    Note that $\widehat{f}^{-1}f$ is local and let $\overline{f}=(\overline{f}_i)_{i \in J} \in \mathrm{F}(\ZZ)^J$ be such that $\widehat{f}^{-1}f(i,k) = (i,\overline{f}_i(k))$  for all $i \in J$ and $k \in \ZZ$. 
    Then 
    \begin{equation}\label{eq:psi}
        f(i,k) = \widehat{f}\widehat{f}^{-1}f(i,k) = \widehat{f}(i,\overline{f}_i(k)) = (\widetilde{f}(i),\overline{f}_i(k)).
    \end{equation}
    Now, we define the map $\ps \colon \comp{\mathrm{F}}(\m{J} \overrightarrow{\times} \ZZ) \to \mathrm{Aut}(\m{J}) \times \mathrm{F}(\ZZ)^J$, by $\ps(f) = (\widetilde{f}, \overline{f})$; 
    note that, since $\widetilde{f}$ is order-preserving,  $\ps(f)$ in an element of the codomain. 
    By \eqref{eq:psi} we have $\f\ps = id$, so $\f$ is surjective.

    For the second part of the lemma, note that every
    $f \in \loc{\mathrm{F}(\m{J} \overrightarrow{\times} \ZZ)}$ is a set-theoretic disjoint union of maps $f_j \in \mathrm{F}(\{j\} \times \ZZ)$, where $j \in J$. 
    Since the elements of $\m{F}(\{j\}\times \ZZ) \cong \m{F}(\ZZ)$ are finite to one maps, the same holds for $f$.
\end{proof}

This isomorphism restricts to the isomorphism between $\m{Aut}(\m{J}) \wr \m{F}_n(\ZZ)$ and $\m{F}_n(\m{J}\overrightarrow{\times} \ZZ)$ described in the preceeding section.  

The next lemma gives a sufficient condition for $\widetilde{f}$ to be a surjective map.

\begin{lemma} \label{l:minmax-surj-dichot}
    For each $f\in \mathrm{F}(\m{J}\overrightarrow{\times} \ZZ)$,
    if for all $j \in J$, $f[J \times \ZZ] \cap (\{j\} \times \ZZ)$ has no minimum and no maximum, then the relation $\widetilde{f}$ is a surjective map.
\end{lemma}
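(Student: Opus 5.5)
The plan is to prove two things separately: that $\widetilde{f}$ is a function (single-valued, and defined on all of $J$), and that it is surjective. In both parts the idea is to show that a failure produces an image block $f[J\times\ZZ]\cap(\{j\}\times\ZZ)$ with a maximum, contrary to the hypothesis. The tools are order-preservation of $f$ and the existence of the residual $f^r$.

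\emph{$\widetilde{f}$ is a map.} Totality is immediate: $f$ is defined on every $(i,0)$, so each $i\in J$ has some image. For single-valuedness, suppose $f(i,m)=(j,p)$ and $f(i,m')=(j',p')$ with $j<j'$. Since $f$ is order-preserving, this forces $m<m'$. By hypothesis the image block in $\{j\}\times\ZZ$ has no maximum, so there is $f(x)$ in block $j$ with $f(i,m)<f(x)$. As $f(x)$ lies in block $j<j'$, we also get $f(x)<f(i,m')$. Strict inequalities between images of an order-preserving map force $(i,m)<x<(i,m')$, so $x=(i,k)$ with $m<k<m'$.

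Iterating this gives infinitely many distinct image points in block $j$ above $f(i,m)$. Each is the image of one of the finitely many points $(i,k)$ with $m<k<m'$. This is a contradiction, so $\widetilde{f}$ is single-valued.

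\emph{Surjectivity.} This is the step needing care, because the hypothesis is vacuously true for an empty image block. So the main point is to show that every block $\{j\}\times\ZZ$ meets the image. Fix $j$ and put $x=(j,0)$ and $y=f^r(x)=\max\{y : f(y)\le x\}$, which exists since $f\in\mathrm{F}(\m{J}\overrightarrow{\times}\ZZ)$.

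Suppose the image misses block $j$. Then $f(y)$ lies in some block $j_1<j$. I claim $f(y)$ is the maximum of the image in block $j_1$. Indeed, any image point $f(w)$ in block $j_1$ satisfies $f(w)<x$, so $w\le y$ by the defining property of $f^r$, and hence $f(w)\le f(y)$. This contradicts the hypothesis that block $j_1$ of the image has no maximum. Therefore block $j$ meets the image, so some $(i,j)\in\widetilde{f}$, and the map $\widetilde{f}$ is surjective.
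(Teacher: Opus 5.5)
Your proof is correct and follows the paper's plan. For single-valuedness you use the same discreteness-plus-monotonicity idea: the paper chooses $x_1+1=x_2$ so that $f(i,x_1)$ is at once the maximum of its block, whereas you iterate over the finitely many points strictly between $(i,m)$ and $(i,m')$.

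Your surjectivity step is the order dual of the paper's. The paper applies the closure $ff^\ell$ to the missed block and obtains a \emph{minimum} in a higher block; you apply $f^r$ and obtain a maximum in a lower block. A side benefit is that you only ever use the ``no maximum'' half of the hypothesis, so your argument shows that this half alone suffices.
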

\begin{proof}
     We first show that $\widetilde{f}$ is a map.
     If not, then there exist $i,j_1,j_2\in J$ and $x_1,x_2,y_1,y_2 \in \ZZ$, such that $f(i,x_1) = (j_1,y_1)$, $f(i,x_2) = (j_2,y_2)$ and $j_1 < j_2$; since $\ZZ$ is discrete and $f$ is order-preserving, we may choose $x_1$ and $x_2$ such that $x_1 +1 = x_2$, which leads to the contradiction $(j_1,y_1) = \max (f[J\times \ZZ] \cap (\{j_1\} \times \ZZ))$. 
     
     Now, if $\widetilde{f}$ is not surjective, then there exists an $i\in J$ such that $i\notin \widetilde{f}[J]$, so for each $y \in  \{i\}\times \ZZ$ and $(k,m)\in f[J\times \ZZ]$, $y\leq (k,m)$ iff $i < \widetilde{f}(k)$. 
     Note that for all $y \in  \{i\}\times \ZZ$,  $ff^\ell(y)$ is the smallest element of the image of $f$ above $y$, as $ff^\ell$ is a closure operator with the same image as that of $f$, so $ff^\ell(y)=ff^\ell(y')$, for all  $y, y' \in  \{i\}\times \ZZ$.
     Hence there exists a $j\in J$ with  $i<j$ and $n\in \ZZ$ such that for each $y \in \{i\} \times \ZZ$, $ff^\ell(y) = (j,n)$. 
     But then, for each $m\in \ZZ$ with $m<n$, $(j,m) \notin f[J\times \ZZ]$, i.e., $(j,n) = \min (f[J \times \ZZ] \cap (\{j\} \times \ZZ))$, contradicting our assumption.
\end{proof}

\section{The dichotomy}\label{s: dichotomy}

To prove the main theorem we will consider an arbitrary non-periodic distributive $\ell$-pregroup $\m L$ and show that it generates all of $\mathsf{DLP}$. In this section, we show that $\m L$ either contains a non-periodic idempotent or it contains idempotents of arbitrary big periodicity. This will allow us to consider only these two cases when proving the main theorem. We first prove the result for 1-generated subalgebras of $\mathrm{F}(\ZZ)$ and then for the general case.

\begin{lemma}\label{l:FZ non-periodic}
   If $f$ is a non-periodic element of $\mathrm{F}(\ZZ)$, then $\langle f \rangle$ either contains a non-periodic idempotent or it contains idempotents of arbitrary big periodicity.
\end{lemma}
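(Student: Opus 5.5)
We would in fact show the stronger statement that $\langle f\rangle$ always contains a non-periodic idempotent. There are three steps: reduce to a positive $f$, use the two idempotents $ff^\ell$ and $f^\ell f$, and finish with a density argument.

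\emph{Reduction to positive $f$.} Write $f=(f\jn 1)(f\mt 1)$. The $k$-periodic elements form a subalgebra, so $f$ non-periodic forces $f\jn 1$ or $f\mt 1$ to be non-periodic. Both lie in $\langle f\rangle$. Since $f\mt 1=(f^\ell\jn 1)^r$ and $\per(g^r)=\per(g)$, in either case we obtain a non-periodic positive element of $\langle f\rangle$. So we may assume $f\geq 1$.

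\emph{The two idempotents.} In $\m F(\ZZ)$ we have $ff^\ell f=f$ (residuated maps on a chain), so $e_1:=ff^\ell$ and $e_2:=f^\ell f$ are idempotents of $\langle f\rangle$.
\begin{itemize}
\item $e_1$ is the closure operator sending $x$ to the least element of $f[\ZZ]$ above $x$. Hence $e_1$ is $n$-periodic iff the image $I:=f[\ZZ]$ satisfies $I+n=I$.
\item $e_2(x)=\min f^{-1}[f(x)]$. Hence $e_2$ is $m$-periodic iff the kernel partition $P$ of $f$ into finite intervals is invariant under translation by $m$.
\end{itemize}
If $e_1$ or $e_2$ is non-periodic we are done. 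Otherwise take $N=\lcm(\per(e_1),\per(e_2))$, so that $I$ and $P$ are both $N$-invariant.

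\emph{The density argument.} The map $f$ is the unique order-preserving bijection from the blocks of $P$ onto the points of $I$. Index the blocks $B_k$ and the points $y_k$ of $I$ by $k\in\ZZ$ in increasing order, so that $f[B_k]=\{y_{k+s}\}$ for a fixed shift $s$. Let $a$ be the number of blocks and $b$ the number of points of $I$ per period of length $N$; both are finite and positive. Then $\min B_k\approx kN/a$ and $y_k\approx kN/b$, up to bounded error. Positivity $f(x)\geq x$ gives $y_{k+s}\geq \min B_k$ for all $k$. Letting $k\to+\infty$ yields $1/b\geq 1/a$, and letting $k\to-\infty$ yields $1/b\leq 1/a$, so $a=b$. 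Therefore translation by $N$ shifts block indices by $a$ and image indices by the same $a$, whence $f(x+N)=f(x)+N$ for all $x$. Thus $f$ is $N$-periodic, a contradiction.

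The main obstacle is making this density and bookkeeping argument rigorous, specifically verifying that $N$-invariance together with $a=b$ really gives $f(x+N)=f(x)+N$ with the same shift $s$. The shift is determined by $f$ and is compatible with the translation precisely because $a=b$. It is also important that positivity is used in both directions, $k\to\pm\infty$.
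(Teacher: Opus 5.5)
Your proof is correct, and it proves more than the lemma asks. It shows that for every non-periodic $f\in\mathrm{F}(\ZZ)$, one of the idempotents $gg^\ell$, $g^\ell g$, with $g\in\{f\jn 1,\ f^\ell\jn 1\}$, is already non-periodic. So in $\m F(\ZZ)$ the alternative ``idempotents of arbitrarily large periodicity'' is never needed.

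The bookkeeping you flag as the main obstacle is routine. With $B_{k+a}=B_k+N$, $y_{k+b}=y_k+N$ and $f[B_k]=\{y_{k+s}\}$, apply positivity at $k=jab$. This gives $y_s+jaN\geq \min B_0+jbN$ for all $j\in\ZZ$, which forces $a=b$. Then for $x\in B_k$ we get $f(x+N)=y_{k+a+s}=y_{k+s}+N=f(x)+N$. A minor wording point: $f$ induces \emph{an} order isomorphism from blocks to image points, not ``the unique'' one.

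The paper argues quite differently. It assumes all idempotents of $\langle f\rangle$ are periodic with maximal periodicity $m$. It then invokes the generation result Proposition~\ref{p:Fn-LPn-gen}(i), imported from the literature, to place all of $\m F_m(\ZZ)$ inside $\langle f\rangle$. Using the $m$-periodic idempotents $\bot_1,\bot_2$ and a witness $f(m)\neq f(0)+m$, it builds an idempotent that collapses an interval of length greater than $m$. That idempotent has periodicity greater than $m$, a contradiction.

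What each route buys:
\begin{itemize}
\item Your route is elementary and self-contained, with no generation theorem needed. It names explicit low-complexity terms and gives the sharper, dichotomy-free conclusion.
\item If a \emph{positive} non-periodic idempotent is wanted, as in the application in Proposition~\ref{p:non-per-gen}, your argument still supplies one. When $g^\ell g$ is non-periodic, so is $(g^\ell g)^\ell=g^\ell g^{\ell\ell}$, which is a positive idempotent.
\item The paper's argument yields only the weaker disjunction, though it is phrased in the same form the later results use.
\end{itemize}
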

\begin{proof}
    Suppose that all idempotents in $\langle f \rangle$ are periodic and suppose for a contradiction that the periodicity of the idempotents is bounded. 
    Let $m$ be the maximum periodicity of an idempotent element in $\langle f \rangle$.  
    Since $f$ is non-periodic, it is not invertible; as a result $ff^{\ell}$ is not the identity 
   so
    $ff^{\ell}$ is a strictly positive idempotent, thus $m>1$. 
    Since $f$ is not $m$-periodic we may assume without loss of generality that $f(0) + m \neq f(m)$. 
    
    If $f(m)-f(0) < m$, then we consider the idempotent $\bot_1$ of periodicity $m$ that maps the interval $[0,m-1]$ to $0$, $[m,2m-1]$ to $m$ etc., and  $\bot_2$ an idempotent of periodicity $m$ that maps the interval $[f(0),f(m)]$ to some point $k$ (note that such a map exists since $f(m) - f(0) <m$). 
    As $\langle f\rangle$ contains a map of periodicity $m$ by hypothesis, it contains all $m$-periodic functions of $\m F(\ZZ)$, by Proposition~\ref{p:Fn-LPn-gen}(i),  
    so $\bot_1,\bot_2 \in \langle f\rangle$.
    Moreover, the map $g = \bot_2f\bot_1$ maps all the elements in the interval $[0,2m-1]$ to $k$ and the map $g^{\ell}g$ is an idempotent mapping $[0,2m-1]$ to one point. 
    Hence by assumption $g^{\ell}g$ has periodicity at least $2m>m$ and it is an element of $\langle f\rangle$ contradicting the maximality of $m$. 

    Otherwise $f(m)-f(0) > m$; we consider $h = f\bot_1$. Then $h(m) -h(0) = f(m) - f(0) > m$ and $h^\ell(x) = m$ for all  $x \in [h(0)+1, h(m)]$, 
    So, $h^\ell$ maps $h(m)-h(0)$ many elements to the same element and so does the idempotent $hh^\ell$, which thus must have periodicity at least $h(m) -h(0) > m$, contradicting the maximality of $m$. 
\end{proof}

The following lemma shows that for every strictly positive/negative order-preserving map on $\ZZ$ there exist intervals of all sizes, which $f$ does not properly expand.

\begin{lemma}\label{l:m-interval}
    For each  order-preserving map $f\colon \ZZ \to \ZZ$ with $f<1$ or $f>1$ and each $m\in \NN$, there exists an $x\in \ZZ$ such that 
    $f(x+m) \leq f(x) + m$.
\end{lemma}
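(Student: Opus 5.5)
We argue by contradiction, using a telescoping sum. Suppose that for some $m\in\NN$ we have $f(x+m) > f(x)+m$ for every $x\in\ZZ$. Since $f$ maps $\ZZ$ to $\ZZ$, this gives $f(x+m)\geq f(x)+m+1$ for all $x$. (For $m=0$ the claim is trivial, since $f(x)\leq f(x)$; so assume $m\geq 1$.) We then exploit the asymptotic behaviour forced by $f<1$ or $f>1$, which we read as strict pointwise inequalities.

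Iterating the inequality from $x=0$ gives, for every $k\in\NN$,
\[
f(km)\geq f(0)+k(m+1)
\quad\text{and}\quad
f(-km)\leq f(0)-k(m+1).
\]
Thus along the progression $km$ the function $f$ grows strictly faster than the identity, with slope at least $(m+1)/m>1$.

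Now split into the two cases of the hypothesis. If $f<1$, i.e.\ $f(x)<x$ for all $x$, then for large $k$ the first inequality gives $f(km)\geq f(0)+km+k>km$, a contradiction. If $f>1$, i.e.\ $f(x)>x$ for all $x$, then $f(-km)\leq f(0)-km-k<-km$ for large $k$, again a contradiction. Hence some $x$ with $f(x+m)\leq f(x)+m$ exists. Order-preservation is not even needed; only integrality and the strict inequality at the points $km$ are used.

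The only delicate point is interpreting $f<1$ and $f>1$. If these mean merely $f\leq 1$, $f\neq 1$ in the lattice order, the argument fails. For example, $f(x)=x$ for $x\geq 0$ and $f(x)=x-2$ for $x<0$ is below the identity, yet $f(x+1)=f(x)+1$ for all $x\neq -1$, and the statement requires a witness $x$ for every $m$. In that case I would first check whether the paper intends the strict pointwise reading. Otherwise I would look for the witness in a region where $f$ coincides with (or stays within bounded distance of) the identity, and apply the same telescoping bound there on the side where $f\leq 1$ forbids steady super-linear growth. The pointwise-strict reading gives the clean proof above.
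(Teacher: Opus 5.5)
Your argument is correct and is essentially the paper's. The paper tracks $x_k=f(-km)+km$, a strictly decreasing sequence of integers bounded below by $0$, which is the same telescoping contradiction as your explicit bound $f(-km)\le f(0)-k(m+1)$.

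Your closing caveat is mistaken, however. In the paper, $f>1$ means $f\ge 1$ and $f\ne 1$ in the pointwise order; the lemma is applied to elements such as $f\vee 1$, which may have fixed points. Your proof never uses strictness: it only needs $f(km)\le km$ (resp.\ $f(-km)\ge -km$), because the contradiction comes from the extra $+k$ term. So it works verbatim under the intended reading, and the fallback plan is unnecessary. Your example is not a counterexample either, since every $x\ge 0$ satisfies $f(x+m)=f(x)+m$ for all $m$.
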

\begin{proof}
    Let $m\in \NN$ and $f\in \mathrm{F}(\ZZ)$ with $f>1$, and suppose for a contradiction that no such $x$ exists, i.e., for all $y\in \ZZ$, $f(y) -f(y-m) > m$. 
    We define the sequence $(x_k)_{k\in \NN}$  by $x_k = f(-km) +km$. Since $f>1$, we get $x_k \geq 0$ for each $k\in \NN$. 
    Moreover, for each $k\in \NN$ we have
    \[
        f(-km) -f(-(k+1)m) = f(-km) - f(-km -m)  > m,
    \]
    yielding 
    \begin{align*}
        x_k = f(-km) + km > f(-(k+1)m) + (k+1)m = x_{k+1}.
    \end{align*}
    So $(x_k)_{k\in \NN}$ is strictly decreasing but bounded from bellow by $0$, a contradiction. 
    The proof for  $f<1$ is obtained similarly, by constructing a strictly increasing sequence.
\end{proof}

\begin{lemma}\label{l:nonper-dichot}
     If $m\geq 1$ and $f$ is a strictly positive function in $\mathrm{F}(\ZZ)$ that is not $m$-periodic, then there exists $x\in \ZZ$ such that $f(x+m) < f(x)+m$ or $f^\ell(x+m) < f^\ell(x)+m$.
\end{lemma}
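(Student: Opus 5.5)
We argue by contraposition. Suppose that for every $x\in\ZZ$ we have both $f(x+m)\geq f(x)+m$ and $f^\ell(x+m)\geq f^\ell(x)+m$. We will show that this forces $f(x+m)=f(x)+m$ for all $x$. By the discussion in Section~\ref{s: prelims}, that means $f$ is $m$-periodic, contradicting the hypothesis.

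Since one inequality is assumed, it remains to prove $f(x+m)\leq f(x)+m$ for all $x$, using only the inequality on $f^\ell$. We use the formula $f^\ell(y)=\min\{z\in\ZZ : y\leq f(z)\}$. This minimum exists because $f\in\mathrm{F}(\ZZ)$, so $f^\ell$ is defined on all of $\ZZ$.

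Suppose, for a contradiction, that $f(x+m)>f(x)+m$ for some $x$, and put $y:=f(x)+1$.
\begin{itemize}
\item[(a)] Since $f$ is order-preserving and $f(z)\leq f(x)<y$ for all $z\leq x$, the minimum defining $f^\ell(y)$ is attained above $x$. Hence $f^\ell(y)\geq x+1$.
\item[(b)] We have $f(x+m)\geq f(x)+m+1=y+m$, so $x+m$ belongs to the set defining $f^\ell(y+m)$. Hence $f^\ell(y+m)\leq x+m$.
\item[(c)] The standing assumption applied at $y$ gives $f^\ell(y)\leq f^\ell(y+m)-m\leq x$.
\end{itemize}
Items (a) and (c) contradict each other. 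Therefore $f(x+m)\leq f(x)+m$ for all $x$, and together with the assumed reverse inequality $f(x+m)=f(x)+m$ everywhere.

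There is no real obstacle. The only point needing care is the direction of the inequalities when translating the condition on $f^\ell$ into an upper bound on how much $f$ expands intervals of length $m$; the choice $y=f(x)+1$ is exactly what makes (a) and (b) hold simultaneously. The argument does not use strict positivity of $f$, only that $f^\ell$ exists, so it proves the statement as given.
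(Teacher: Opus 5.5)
Your proof is correct and is essentially the paper's argument. The paper likewise takes a point $x$ with $f(x+m)\neq f(x)+m$ and, when $f(x+m)\geq f(x)+m+1$, uses the same witness $x'=f(x)+1$ to get $f^\ell(x'+m)\leq x+m<f^\ell(x')+m$, which is exactly your steps (a)--(c); you merely phrase it as a contraposition rather than exhibiting the point directly, and you are right that strict positivity is not used in either version.
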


\begin{proof}
    Since $f$ is not $m$-periodic there exists $x\in \ZZ$ such that $f(x+m) \neq f(x)+m$. 
    If $f(x + m) < f(x) +m$ fails, then $f(x+m) \geq f(x) +m +1$, so we have 
    \[
    f^\ell(f(x)+1 + m) =  \min \{ y \in \ZZ : f(x) + 1 + m \leq f(y) \} \leq x+m
    \]
    and
    \[
    f^\ell(f(x) + 1) =  \min \{ y \in \ZZ : f(x) + 1 \leq f(y) \} > x.
    \]
    Thus for $x' = f(x) + 1$, we have
    \[
    f^\ell(x' + m) =  f^\ell(f(x) + 1 + m) \leq x +m < f^\ell(f(x) + 1) + m = f^\ell(x') + m.\qedhere
    \]
\end{proof}

\begin{lemma}\label{l:non-per-shrink}
      If $m\geq 1$ and $f$ is a strictly positive function in $\mathrm{F}(\ZZ)$ that is not $m$-periodic, then there exist  $k,x\in \ZZ$, such that $f^{[k]}f^{\ell}(x+m) < f^{[k]}f^{\ell}(x) + m$.
\end{lemma}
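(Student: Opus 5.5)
The plan is to split into the two alternatives of Lemma~\ref{l:nonper-dichot}. Since $f$ is strictly positive and not $m$-periodic, that lemma gives a $y\in\ZZ$ with either (a) $f^\ell(y+m)<f^\ell(y)+m$, or (b) $f(y+m)<f(y)+m$. Throughout we use the formula $f^{[k]}(z)=f(z-k)+k$, which holds because $\ZZ$ is discrete. With it, the quantity to control becomes
\[
f^{[k]}f^\ell(x+m)-f^{[k]}f^\ell(x)=f\bigl(f^\ell(x+m)-k\bigr)-f\bigl(f^\ell(x)-k\bigr).
\]
So the task is to find $x$ and a translation $k$ such that $f$ does not stretch the interval $[f^\ell(x)-k,\,f^\ell(x+m)-k]$ to length $\geq m$.

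\emph{Case (a).} Put $x:=y$, $a:=f^\ell(y)$, $b:=f^\ell(y+m)$ and $d:=b-a$. Since $f^\ell$ is order-preserving, $0\le d<m$. Because $f>1$, Lemma~\ref{l:m-interval} applied with $d$ in place of $m$ yields $z$ with $f(z+d)\le f(z)+d$. Taking $k:=a-z$ gives
\[
f^{[k]}f^\ell(x+m)-f^{[k]}f^\ell(x)=f(z+d)-f(z)\le d<m,
\]
as required.

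\emph{Case (b).} The idea is to choose $x$ so that $f^\ell(x)$ and $f^\ell(x+m)$ sit exactly at the ends of an interval on which $f$ already shrinks, and then to translate with $k=0$ or $k=\pm1$. First choose $x$ with $f^\ell(x)=y$. For instance, $x:=f(y-1)+1$ works, since $f^\ell(x)=\min\{w: x\le f(w)\}$ and $f(y-1)<x\le f(y)$. Let $b:=f^\ell(x+m)$. By minimality of $b$ we have $f(b-1)<x+m$, and this gives
\[
f^{[1]}(b)-f^{[1]}(y)=f(b-1)-f(y-1)\le m.
\]
That is already the non-strict version of the claim with $k=1$.

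The main obstacle is upgrading this to a strict inequality, and this is where hypothesis (b) must enter. I would try shifting the choice of $x$ within the fibre interval $(f(y-1),f(y)]$, for example taking $x=f(y)$ or $x=f(y+m)-m$, and compare $f^\ell(x+m)$ with $y+m$. The goal is to use $f(y+m)<f(y)+m$ to force $f^\ell(x+m)\le y+m$, so that $k=0$ gives
\[
ff^\ell(x+m)\le f(y+m)<f(y)+m=ff^\ell(x)+m.
\]
The delicate point is the boundary behaviour of $f^\ell$ at image points. If these choices fail, the fallback is to reduce case (b) to case (a). Pick an $x$ for which $f^\ell(x+m)-f^\ell(x)$ is as small as possible, and use (b) to show that this minimum is $<m$. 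Then the translation argument via Lemma~\ref{l:m-interval} from case (a) applies verbatim.
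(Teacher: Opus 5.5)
Your case (a) is correct and is exactly the second case of the paper's proof. Case (b), however, is not proved. The inequality you obtain with $k=1$ is only non-strict and does not use hypothesis (b) at all, and neither proposed repair works in general.

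Restricting to translations $k\in\{0,\pm1\}$ is too rigid. The fallback claim, that (b) forces $\min_x\bigl(f^\ell(x+m)-f^\ell(x)\bigr)<m$, is false. Take $m=1$, and let $f(y)=\lfloor y/2\rfloor$ for $y<0$ and $f(y)=y$ for $y\geq 0$.
\begin{itemize}
\item We have $f>1$ and $f(-1)=f(-2)$, so (b) holds at $y=-2$.
\item We have $f^\ell(x)=2x$ for $x<0$ and $f^\ell(x)=x$ for $x\geq 0$, so $f^\ell(x+1)-f^\ell(x)\geq 1$ for every $x$. Thus (a) never holds and case (b) cannot be reduced to case (a).
\item A direct computation gives $ff^\ell=f^{[1]}f^\ell=1$, and $f^{[-1]}f^\ell(x+1)-f^{[-1]}f^\ell(x)\geq 1$ for all $x$. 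So no choice of $x$ succeeds with $k\in\{0,\pm1\}$.
\end{itemize}
Separately, $x:=f(y-1)+1$ satisfies $f^\ell(x)=y$ only if $f(y-1)<f(y)$, which need not hold.

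The missing idea is to handle (b) symmetrically to (a), with the roles of $f$ and $f^\ell$ exchanged and with an unrestricted translation. Since $f>1$ implies $f^\ell<1$, Lemma~\ref{l:m-interval} applied to $f^\ell$ gives $y'$ with $f^\ell(y'+m)\leq f^\ell(y')+m$. Now put $x:=y'$ and $k:=f^\ell(y')-y$. This translates the interval $[f^\ell(y'),f^\ell(y'+m)]$, of length at most $m$, into $[y,y+m]$, where $f$ shrinks. Hence
\[
f^{[k]}f^\ell(y'+m)\leq f(y+m)+k<f(y)+m+k=f^{[k]}f^\ell(y')+m .
\]
In the example above this gives $y'=0$ and $k=2$, outside the range of translations you considered.
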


\begin{proof}
    Since $f>1$ is not $m$-periodic, by Lemma~\ref{l:nonper-dichot}, there exists an $x\in \ZZ$ such that $f(x+m) < f(x) + m$ or $f^\ell(x +m) < f^\ell(x) +m$. 
    
    We first assume that $f(x+m) < f(x) + m$. 
    Since $f>1$, we get $f^\ell <1$, so  by Lemma~\ref{l:m-interval}, there exists a $y\in \ZZ$ such that $f^\ell(y + m) \leq f^\ell(y) + m$.  
    For $k := f^{\ell}(y)- x$, we have $f^\ell(y) -k = x$, so
    \begin{align*}
        f^{[k]}(f^\ell(y + m)) &\leq f^{[k]}(f^\ell(y) + m) \\
          &= f(f^\ell(y) -k + m) +k \\
          &= f(x + m) + k \\
          &< f(x) +k + m \\
          &= f(f^\ell(y) -k) + k + m \\
          &= f^{[k]}(f^\ell(y)) + m.
    \end{align*}
     
    Now, if $f^\ell(x+m) < f^\ell(x) + m$, then $m' := f^\ell(x+m) - f^\ell(x) <m$. 
    By Lemma~\ref{l:m-interval}, there exists a $z\in \ZZ$ such that $f(z+m') \leq f(z) + m'$. For $k := f^\ell(x) -z$, we have $f^\ell(x) - k = z$, so
    \begin{align*}
        f^{[k]}(f^\ell(x+m)) &= f^{[k]}(f^\ell(x+m) + f^\ell(x) - f^\ell(x)) \\
        &= f(f^\ell(x) - k + m') +k\\
        &= f(z + m') +k \\
        &\leq f(z) + m' + k \\
        &= f(f^\ell(x) -k) + k + m' \\
        &= f^{[k]}(f^\ell(x)) + m'\\
        &< f^{[k]}(f^\ell(x)) + m. \qedhere
    \end{align*}
\end{proof}

\begin{proposition}\label{p:non-per-gen}
    If $f\in \mathrm{F}(\m{J}\overrightarrow{\times}\ZZ)$ is strictly positive and non-periodic, and $\widetilde{f}$ is a bijective map, then $\langle f\rangle$ contains a local non-periodic positive idempotent element or local periodic positive idempotent elements of arbitrarily large periodicity.
\end{proposition}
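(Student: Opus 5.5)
The plan is to reduce everything to local elements and then rerun the counting argument of Lemma~\ref{l:FZ non-periodic} inside a suitable component.

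\textbf{Locality of the elements used.} Write $f\equiv(\widetilde f,\overline f)$ via Lemma~\ref{l:comp-is-wreath}. Since $(\cdot)^{[k]}$ is an automorphism and $f^{[k]}(i,x)=f(i,x-k)+k$, the element $f^{[k]}$ is composite with global component $\widetilde f$. By the wreath product formulas, $f^\ell$ has global component $\widetilde f^{-1}$. So every element $g_k:=f^{[k]}f^\ell\in\langle f\rangle$ is local. A direct computation gives its component at $j$ as
\[
\overline{(g_k)}_j=\overline f_i^{[k]}\,\overline f_i^{\ell},\qquad i=\widetilde f^{-1}(j).
\]
In particular $ff^\ell=g_0$ is a local positive idempotent with components $\overline f_i\overline f_i^\ell$. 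If it is non-periodic we are done. Otherwise, suppose for a contradiction that all local positive idempotents of $\langle f\rangle$ are periodic, with maximal periodicity $m$. Here $m>1$ because $ff^\ell\neq 1$: a non-periodic $f$ is not invertible.

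\textbf{A non-$m$-periodic component and shrinking.} Because $f^{[m]}$ has components $\overline f_i^{[m]}$, and $f$ is not $m$-periodic, some component $\overline f_i$ is not $m$-periodic. I want to apply Lemma~\ref{l:non-per-shrink} to $\overline f_i$. This needs $\overline f_i$ strictly positive, which holds when $\widetilde f(i)=i$, since $f>1$. That lemma then gives $k,x$ with $g_k$ shrinking an interval of length $m$ at component $j=\widetilde f^{-1}(i)$:
\[
\overline{(g_k)}_j(x+m)<\overline{(g_k)}_j(x)+m.
\]
If instead $\widetilde f(i)>i$, then $\overline f_i$ need not be positive, and Lemma~\ref{l:m-interval} is not directly available. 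I expect this to be the main obstacle. In that case I would first try to show that Lemma~\ref{l:m-interval} still holds for $\overline f_i^\ell$ when $\overline f_i$ is merely finite-to-one and not $m$-periodic. Failing that, I would replace $f$ by a local element such as $f^\ell f^{[k]}$, whose components are $\overline f_i^{\ell}\overline f_i^{[k]}$ taken at the same $i$; these are order-preserving compositions of $\overline f_i$ with shifts of $\overline f_i^{\ell}$. The monotone-sequence argument of Lemma~\ref{l:m-interval} should go through for such maps, because they stay within bounded distance of the identity on long stretches.

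\textbf{Producing a large collapse.} Now take a local positive idempotent $e\in\langle f\rangle$ of periodicity exactly $m$. If needed, apply the automorphisms $(\cdot)^{[s]}$ and the construction in the proof of Proposition~\ref{prop:period-subalg} to arrange that its relevant component collapses blocks of length $m$, playing the role of $\bot_1$. Then mimic Lemma~\ref{l:FZ non-periodic}. The local element $u=e\,g_k\,e$, or a variant with a second idempotent playing the role of $\bot_2$, collapses at component $j$ an interval of length at least $2m$ to one point. Hence the local positive idempotent $uu^\ell$, or $u^\ell u$ rewritten as a positive idempotent, has a component of periodicity greater than $m$.

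By Lemma~5.5 of \cite{GS} applied to local elements, periodicity is the $\operatorname{lcm}$ of the component periodicities when it is finite. So this idempotent is either non-periodic or periodic with periodicity greater than $m$, contradicting the choice of $m$. The opposite inequality, the analogue of the case $f(m)-f(0)>m$ in Lemma~\ref{l:FZ non-periodic}, is handled by the element $h=g_ke$ and the idempotent $hh^\ell$, exactly as in that lemma.
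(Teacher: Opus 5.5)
\textbf{There is a genuine gap.} It is the case you flag and leave open, $\widetilde f(i)\neq i$, where $\overline f_i$ need not be positive. Neither of your suggested repairs can work, because the statement is false in that case.

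\emph{Counterexample.} Take $\m{J}=\ZZ$ and let $f(i,x)=(i+1,2x)$ for even $i$ and $f(i,x)=(i+1,\lfloor x/2\rfloor)$ for odd $i$. Then $f>1$, $\widetilde f\colon i\mapsto i+1$ is bijective, and $f$ is not periodic, since $x\mapsto 2x$ is not $n$-periodic for any $n\geq 1$.

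\emph{The invariant.} Put $u(i)=1$ for even $i$ and $u(i)=2$ for odd $i$. Let $S$ be the set of maps $g(i,x)=(i+n,g_i(x))$ with $n\in\ZZ$, $g_i\in\mathrm{F}(\ZZ)$ and $g_i(x+u(i))=g_i(x)+u(i+n)$ for all $i,x$.
\begin{itemize}
\item $f\in S$, with $n=1$.
\item $S$ is closed under composition.
\item $S$ is closed under pointwise meets and joins: if the two shifts $n$ differ, the meet or join is just one of the two maps.
\item $S$ is closed under ${}^\ell$ and ${}^r$, because $g^\ell(i,y)=(i-n,g_{i-n}^\ell(y))$, and $a(x+p)=a(x)+q$ for all $x$ implies $a^\ell(y+q)=a^\ell(y)+p$ for all $y$ (similarly for ${}^r$).
\end{itemize}
A local element of $S$ has $n=0$, so each $g_i$ is $u(i)$-periodic and $g$ is $2$-periodic. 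Hence every local element of $\langle f\rangle$ is $2$-periodic, and the conclusion of the statement fails.

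\emph{Effect on your argument.} Here $m=2$, witnessed by $ff^\ell$, whose odd components are $2\lceil x/2\rceil$, and no contradiction is ever reached. Your first repair is false: $\overline f_i=\lfloor x/2\rfloor$ gives $\overline f_i^\ell(x)=2x$, which stretches every interval. Your second repair also fails, since $f^\ell f^{[k]}$, like every other local word in $f$, is $2$-periodic.

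\textbf{Comparison with the paper.} The paper's proof follows your first two steps. It makes $g=f^{[k]}f^\ell$ local with a non-$m$-periodic component $\overline g_j$ via Lemma~\ref{l:non-per-shrink}. It does not rebuild $\bot_1,\bot_2$ inside $\langle f\rangle$ as you propose; that would need a local idempotent whose component at that particular $j$ has periodicity exactly $m$, which nothing guarantees. Instead it applies Lemma~\ref{l:FZ non-periodic} or Proposition~\ref{p:Fn-LPn-gen}(i) to $\overline g_j\in\mathrm{F}(\ZZ)$, lifts the resulting term back to $g$, and passes to $hh^\ell$.

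However, the paper applies Lemma~\ref{l:non-per-shrink} to $\overline f_i$ without checking that $\overline f_i$ is strictly positive, so it has exactly the hole you found. In the example, $f^{[k]}f^\ell$ has components $2\lceil x/2\rceil-k$ and $x+\lfloor k/2\rfloor$, so for $m=2$ the required $k,x$ do not exist.

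The example also reaches further. Every idempotent of $S$ has $n=0$, so every idempotent of $\langle f\rangle$ is local and $2$-periodic. Thus $\langle f\rangle$ satisfies $(xx^\ell)^{\ell\ell}\approx(xx^\ell)^{rr}$. This equation fails in $\m{F}(\ZZ)$ at $x=\cc$, where $\cc\cc^\ell=\cc$ and $\cc$ is not $2$-periodic. So the same $f$ also contradicts Lemma~\ref{l:pos-idem-non-periodic} and Theorem~\ref{t: main}.
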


\begin{proof}
    Since $\widetilde{f}$ is a bijective map, by Lemma~\ref{l:comp-is-wreath}, $f$ can be seen as a member $(\widetilde{f},\overline{f})$ of $\m{Aut}(\m{J})\wr \m{F}(\ZZ)$. 
    Since $f$ is non-periodic and $f^{[k]}= (\widetilde{f},\overline{f}^{[k]})$ for every $k \in \ZZ$, we get that $\overline{f}$ is non-periodic; 
    so for each $m\geq 1$ there exists an $i\in J$, such that $\overline{f}_i$ is not $m$-periodic. 
    Since $\overline{f}_i \in \mathrm{F}(\ZZ)$, by Lemma~\ref{l:non-per-shrink}, there exist $k,x\in \ZZ$ such that $\overline{f}_i^{[k]}\overline{f}_i^\ell(x+m) < \overline{f}_i^{[k]}\overline{f}_i^\ell(x) +m$. 
    Also, $f^{[k]}f^\ell = (\widetilde{f},\overline{f}^{[k]})(\widetilde{f}^{-1},\overline{f}^\ell\otimes \widetilde{f}^{-1}) 
    =(id, (\overline{f}^{[k]} \otimes \widetilde{f}^{-1}) \cdot (\overline{f}^\ell\otimes \widetilde{f}^{-1})) = (id,(\overline{f}^{[k]}\overline{f}^\ell)\otimes \widetilde{f}^{-1})$. 
    So $g := f^{[k]}f^\ell$ is local and for $j=\widetilde{f}(i)$, we have 
    \[
    \overline{g}_j(x+m) =  \overline{f}_{\widetilde{f}^{-1}(j)}^{[k]}\overline{f}_{\widetilde{f}^{-1}(j)}^\ell(x+m) = \overline{f}_i^{[k]}\overline{f}_i^\ell(x+m) < \overline{f}_i^{[k]}\overline{f}_i^\ell(x) +m = \overline{g}_j(x) + m.
    \]
    Hence $\overline{g}_j$ is not $m$-periodic.  
    There are two cases.
    If $\overline{g}_j$ is non-periodic, by Lemma~\ref{l:FZ non-periodic}, $\overline{g}_j$ generates a non-periodic positive idempotent or positive idempotents of arbitrarily large periodicity. 
    Otherwise, $\overline{g}_j$ is periodic, say $k$-periodic with $k\nmid m$, and generates a $k$-periodic positive idempotent, by Proposition~\ref{p:Fn-LPn-gen}(i). 

    In both cases there exists a positive idempotent $a\in \langle \overline{g}_j \rangle$ which is not $m$-periodic so there is a term $t$ such that $a=t(\overline{g}_j)$; 
    we define $h:=t(g)$, so $h\in \langle g \rangle$ and $hh^\ell \in \langle g\rangle$. Since $g$ is local,  $h$ and $hh^\ell$ are also local; so $hh^\ell \in \langle g\rangle$ is a local positive idempotent with $\overline{hh^\ell}_j =aa^\ell= a$. 
    In particular, since $a$ is not $m$-periodic, also $hh^\ell$ is not $m$-periodic. 
    In conclusion for each $m\geq 1$ we get a local positive idempotent that is not $m$-periodic, yielding either a non-periodic local positive idempotent or local periodic positive idempotents of arbitrarily large periodicity. 
\end{proof}

\begin{lemma}\label{l:pos-idem-non-periodic}
    If
    \(\m L\)\/ is a non-periodic distributive $\ell$-pregroup,  then it contains a non-periodic positive idempotent or periodic positive idempotents of arbitrarily big periodicity. 
\end{lemma}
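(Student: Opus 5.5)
We reduce to the concrete situation of Proposition~\ref{p:non-per-gen} via the representation theorem. By \cite{GJ,GG1}, we may assume $\m L\leq \m F(\m J\overrightarrow{\times}\ZZ)$ for some chain $\m J$. If $\m L$ contains elements of arbitrarily large periodicity, Corollary~\ref{cor:per-gen-DLP} already gives positive idempotents of arbitrarily large periodicity, and we are done. So we assume the periodicities of periodic elements of $\m L$ are bounded. Since $\m L$ is non-periodic, it contains some non-periodic element $a$.

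\textbf{Step 1: pass to a strictly positive non-periodic element.} First note that $aa^\ell$ and $a^\ell a$ are idempotents; positive or negative idempotents can be turned into positive ones via $^\ell$. If $aa^\ell$ is non-periodic, we are done. Otherwise we look at $a\jn 1$ and $a\mt 1 = (a^\ell\jn 1)^r$. Since $a=(a\jn1)(a\mt1)$ and the periodic elements form a subalgebra, at least one of $a\jn 1$ or $a^\ell\jn 1$ is non-periodic. This gives a positive non-periodic element $b$. Positive is not yet strictly positive in the sense needed (pointwise $>$). To get strict positivity, replace $b$ by $f:=b\cdot s$ for a suitable element. The most natural choice stays inside $\langle a\rangle$: use $f:=b\,(aa^\ell)$ or $f:= b \jn (aa^\ell)^{[1]}\cdots$. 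Alternatively, and more robustly, one can work with $f:=b^{[1]}b$ or $f:=b\,b^{[1]}\cdots$; we will choose whichever keeps non-periodicity.

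\textbf{Step 2: arrange that $\widetilde f$ is a bijection.} We aim to apply Lemma~\ref{l:minmax-surj-dichot}: we need that $f[J\times\ZZ]\cap(\{j\}\times\ZZ)$ has no min and no max for each $j$. For a non-composite $f$, the image meets some block in a bounded-below (or bounded-above) way. One then shows that $f^{\ell}$ or $ff^\ell$ collapses a whole block tail, as in the proof of Lemma~\ref{l:minmax-surj-dichot}. This produces an idempotent $ff^\ell$ which is itself not composite, hence non-periodic by Lemma~\ref{l: periodic-is-composite}, so we are done in that case. Thus either we directly obtain a non-periodic positive idempotent ($ff^\ell$, or its $^{\ell}$-conjugate if negative), or $\widetilde f$ is a surjective map. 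For injectivity, apply the same reasoning to $f^\ell$, whose relation is essentially the inverse. Once $\widetilde f$ is a bijection, Proposition~\ref{p:non-per-gen} yields the required idempotents in $\langle f\rangle\subseteq L$.

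\textbf{Main obstacle.} The delicate step is Step 2, namely showing that failure of the hypothesis of Lemma~\ref{l:minmax-surj-dichot} forces a non-periodic idempotent. My first attempt is to argue that if the image of $f$ in some block $\{j\}\times\ZZ$ has a minimum $(j,n)$, then $ff^\ell$ maps the entire tail $\{(j-\text{block})\}$ below onto one point. In that case $ff^\ell$ identifies infinitely many points, which is impossible for a composite function (since local parts are finite-to-one by Lemma~\ref{l:comp-is-wreath}). Hence $ff^\ell$ is a non-composite, thus non-periodic, positive idempotent. The symmetric case of a maximum uses $f^\ell f$ and negation via $^\ell$.
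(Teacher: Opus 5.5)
Your architecture is the paper's: reduce to a positive non-periodic $f$; extract a non-composite positive idempotent when $f[J\times\ZZ]$ has a minimum or maximum in some block; otherwise use Lemma~\ref{l:minmax-surj-dichot} and, once $\widetilde f$ is bijective, Proposition~\ref{p:non-per-gen}. Your minimum case is correct and a bit slicker than the paper's argument: $ff^\ell$ collapses a whole lower tail of the block onto the minimum, and composite maps are finite-to-one by Lemma~\ref{l:comp-is-wreath}.

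\textbf{The gap.} The genuine gap is the case where $\widetilde f$ is a surjective map that is not injective. This is case (3ii) of the paper's proof, where the paper has to do real work. You dismiss it with ``apply the same reasoning to $f^\ell$, whose relation is essentially the inverse.'' That description holds only for composite maps. To run your min/max argument on $f^\ell$ you would first have to show that $f^\ell[J\times\ZZ]$ has a minimum or maximum in some block, and nothing in the proposal does this.

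For the same reason, your claim that the image of a non-composite $f$ meets some block in a set bounded below or above is false. On $\ZZ\overrightarrow{\times}\ZZ$, define $f(i,n)=(i+1,n)$ for $i<0$, $f(0,n)=(1,\min(n,0))$, $f(1,n)=(1,\max(n,0))$, and $f(i,n)=(i,n)$ for $i\geq 2$. This is a positive, surjective element of $\mathrm{F}(\ZZ\overrightarrow{\times}\ZZ)$ that is not composite.

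\textbf{The missing idea.} Suppose $k<m$ and $\widetilde f(k)=\widetilde f(m)=j$. Then $f$ maps $\{k\}\times\ZZ$ into $\{j\}\times\ZZ$ below $f(m,0)$. By discreteness, $f$ is constant on an upper tail of $\{k\}\times\ZZ$, with value $z:=\max f[\{k\}\times\ZZ]$. So $f$ has an infinite fiber.
\begin{itemize}
\item The negative idempotent $f^\ell f$ sends each point to the least element of its $f$-fiber. It therefore collapses this infinite fiber to a point, so it is not finite-to-one and hence not composite.
\item Composite maps form a subalgebra, so the positive idempotent $(f^\ell f)^\ell=f^\ell f^{\ell\ell}$ is not composite either.
\item By Lemma~\ref{l: periodic-is-composite}, $f^\ell f^{\ell\ell}$ is non-periodic.
\end{itemize}
This is exactly the paper's idempotent. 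The paper certifies it differently, by showing that $f^\ell(z)$ and $f^\ell(z+1)$ lie in different blocks, so $f^\ell f^{\ell\ell}$ splits the block of $f^\ell(z)$.

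\textbf{Two smaller repairs.}
\begin{itemize}
\item In the maximum case, $f^\ell f$ is the wrong element: it equals $1$ whenever $f$ is injective. Either use $ff^r$, which collapses an upper tail, and pass to $(ff^r)^\ell=ff^\ell$; or note that if $y$ is the maximum of the image in block $j$, then $ff^\ell(y+1)$ is the minimum of the image in a higher block, so the minimum case applies.
\item The strict-positivity detour in Step~1 is unnecessary. In Proposition~\ref{p:non-per-gen}, ``strictly positive'' means $f>1$ in the lattice order; pointwise strict positivity is impossible for idempotents. Every positive non-periodic element satisfies $f>1$, so the unspecified modifications of $b$ should simply be deleted.
\end{itemize}
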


\begin{proof}
    If every element of $\m{L}$ is periodic then, since  $\m{L}$ is not periodic, it contains periodic elements of arbitrary big periodicity and thus, by Corollary~\ref{cor:per-gen-DLP}, positive idempotents of arbitrary big periodicity. 
   
   Otherwise there is 
   a non-periodic $f\in L$; if both $f \jn 1$ and $f^\ell \jn 1$ were periodic, then they would also be periodic  with period equal to the least common multiple, say $n$, of the individual periods, so $f = (f \jn 1)(f \mt 1)=(f \jn 1)(f^\ell \jn 1)^r$ would also be $n$-periodic. 
   Thus at least one of the positive functions $f \jn 1$ and $f^\ell \jn 1$ is not periodic; as a result we may assume that $f>1$.

   Since $\m L$ is distributive, up to isomorphism it has a representation of the form $\m{L} \leq \m{F}(\m{J} \overrightarrow{\times} \ZZ)$.

    (1) If, for some $j \in J$, $f[J \times \ZZ] \cap (\{j\} \times \ZZ)$ has a minimum element, say $y$, then $y-1$ is not in the image of $f$ and $z:=ff^r(y-1) \leq y-1$, so  $z \notin f[J \times \ZZ] \cap (\{j\} \times \ZZ)$; also, $z \in \{k\} \times \ZZ$, for some $k<j$. 
    Now, for $w:=f^r(y-1)$, we have $f(w)=z$ and $f(w+1)=y$, where $z \in \{k\} \times \ZZ$ and $y \in \{j\} \times \ZZ$, yielding $f^\ell(z) \leq w$ and $w+1 = f^\ell(z+1)$.
    Therefore, $ff^\ell$ sends two elements of the same component, namely $z$ and $z+1$, 
    to different components of  $\m J \times \ZZ$, because $ff^\ell(z)=z$ and $ff^\ell(z+1)=y$, hence $ff^\ell$ is not composite and thus also non-local. 
    By Lemma~\ref{l: periodic-is-composite}, 
    $ff^\ell$ is not periodic, while we also know that it is positive and  idempotent. 

    (2) If, for some $j \in J$,  $f[J \times \ZZ] \cap (\{j\} \times \ZZ)$ has a maximum element, then we can replace $f$ by $f^\ell$ and proceed as in  (1).

    (3) If, for all $j \in J$, $f[J \times \ZZ] \cap (\{j\} \times \ZZ)$ has no minimum and no maximum, then
    by Lemma~\ref{l:minmax-surj-dichot}, $\widetilde{f}$ is a surjective map. 
    There are two cases:

    (3i) If $\widetilde{f}$ is injective and thus bijective, then, by Proposition~\ref{p:non-per-gen}, the claim follows.
    
    (3ii) If $\widetilde{f}$ is not injective,
    then there exist
    $f(x), f(y) \in \{j\} \times \ZZ$ for some $j$ and $x  \in \{k\} \times \ZZ$, $y  \in \{m\} \times \ZZ$ with $k < m$. Since $f$ is order-preserving and $\{j\} \times \ZZ$ discrete, we get that $\{f(a): a \in  \{k\} \times \ZZ\}$ has a maximum element in $\{j\} \times \ZZ$, say $z$. 
    Then $f^\ell(z) \in  \{s\} \times \ZZ$, where $s\leq k<m$. Also, $f^\ell(z+1) \in \{t\} \times \ZZ$, where  $t>k$. 
    Thus $f^{\ell}f^{\ell\ell}$ maps $f^\ell(z)$ to $f^{\ell}f^{\ell\ell}f^\ell(z)=f^\ell(z)$, an element of $\{s\} \times \ZZ$, and $f^\ell(z)+1$ to 
    \[
    f^\ell f^{\ell\ell}(f^\ell(z)+1) = f^\ell(f(f^\ell(z))+1) = f^\ell(z+1),
    \]
    an element of $\{t\} \times \ZZ$;  hence it is not composite and thus also non-local.
    By Lemma~\ref{l: periodic-is-composite}, 
    $f^{\ell}f^{\ell\ell}$ is not periodic, while it is positive and  idempotent. 
\end{proof}

\section{Simulating functions}\label{s: simulation}

In this section we define the key notion of a simulation, which in effect codifies the ability of a distributive $\ell$-pregroup to falsify equations that fail in another distributive $\ell$-pregroup.

For $f, g \in \mathrm{F}(\ZZ)$ and $k\in \NN$, we write $f \sim_k g$ if $f$ and $g$ agree on $[-k,k]$; note that $\sim_k$ is an equivalence relation on $\mathrm{F}(\ZZ)$.

For $f\in \mathrm{F}(\ZZ)$ and $k\in \NN$ we define
\begin{align*}
        a(f,k) &= \max ( \{ \lvert f^\ell(k) \rvert, \lvert f^\ell(-k) \rvert+1 \}) \\
        b(f,k) &= \max (\{ \rvert f^r(k) \rvert +1, \rvert f^r(-k) \rvert \}) \\
        c(f,k) &= \max\{ \lvert f(k) \rvert, \lvert f(-k) \rvert  \}.
    \end{align*}

\begin{lemma}\label{l:simulation_connectives}
    Let $f_1,f_2,g_1,g_2 \in F(\ZZ)$, $k\in \NN$ and 
    \begin{align*}
        a = a(f_1,k), \quad
        b = b(f_1,k), \quad
        c = c(f_1,k).
    \end{align*}
    \begin{enumerate}[label = \textup{(\roman*)}]
        \item If $f_1 \sim_k g_1$ and $f_2 \sim_k g_2$, then $f_1 \bullet f_2 \sim_k g_1 \bullet g_2$ for $\bullet \in \{ \mt, \jn \}$.
        \item If $f_1 \sim_a g_1$, then  $f_1^\ell \sim_k g_1^\ell$.
        \item If $f_1 \sim_b  g_1$, then  $f_1^r \sim_k g_1^r$.
        \item If $f_1 \sim_k g_1$ and $f_2 \sim_c g_2$, then $f_2 \circ f_1 \sim_k g_2 \circ g_1$.
    \end{enumerate}
\end{lemma}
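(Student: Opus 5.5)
We argue directly from the pointwise definitions, treating each item separately; the essential facts are that elements of $\mathrm{F}(\ZZ)$ are order-preserving and that $f^\ell$, $f^r$ are given by the min/max formulas of Section~\ref{s: prelims}.

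Item (i) is immediate, since meet and join are computed pointwise: for $x\in[-k,k]$ we have $(f_1\bullet f_2)(x)=f_1(x)\bullet f_2(x)=g_1(x)\bullet g_2(x)=(g_1\bullet g_2)(x)$.

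Item (iv) is similar. For $x\in[-k,k]$ we get $g_1(x)=f_1(x)$. By monotonicity, $f_1(x)\in[f_1(-k),f_1(k)]\subseteq[-c,c]$. Hence $g_2(g_1(x))=g_2(f_1(x))=f_2(f_1(x))$.

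For (ii), fix $x\in[-k,k]$. Put $y_0=f_1^\ell(x)=\min\{y: x\le f_1(y)\}$. We must show $y_0=\min\{y: x\le g_1(y)\}$, which amounts to two facts:
\begin{itemize}
\item[(a)] $x\le g_1(y_0)$;
\item[(b)] $g_1(y_0-1)<x$.
\end{itemize}
By monotonicity of $f_1^\ell$, we have $f_1^\ell(-k)\le y_0\le f_1^\ell(k)$. Therefore $y_0\in[-a+1,a]$, which gives $y_0\in[-a,a]$ and $y_0-1\in[-a,a]$. This is exactly why the $+1$ is attached to $\lvert f^\ell(-k)\rvert$ in the definition of $a$. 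Then $g_1(y_0)=f_1(y_0)\ge x$ and $g_1(y_0-1)=f_1(y_0-1)<x$. Since $g_1$ is order-preserving, (a) and (b) identify the minimum, so $g_1^\ell(x)=y_0$.

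Item (iii) is dual. Set $y_0=f_1^r(x)=\max\{y: f_1(y)\le x\}$. Monotonicity gives $y_0\in[f_1^r(-k),f_1^r(k)]$, so both $y_0$ and $y_0+1$ lie in $[-b,b]$, which is the role of the $+1$ on $\lvert f^r(k)\rvert$. From $g_1(y_0)=f_1(y_0)\le x<f_1(y_0+1)=g_1(y_0+1)$ we conclude $g_1^r(x)=y_0$.

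The only delicate point is this boundary bookkeeping: one must check that the witnesses $y_0\pm1$ stay inside the window on which $f_1$ and $g_1$ agree. The asymmetric $+1$ terms in $a$ and $b$ are tailored to exactly this.
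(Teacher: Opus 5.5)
Your proof is correct and follows essentially the paper's route: a pointwise check from the min/max formulas for $f^\ell$ and $f^r$, where the $+1$ in $a$ and $b$ keeps the relevant points inside the window on which $f_1$ and $g_1$ agree. The only difference is cosmetic and lies in (ii)/(iii). You identify $g_1^\ell(x)$ (resp.\ $g_1^r(x)$) directly through the witnesses $y_0$ and $y_0-1$ (resp.\ $y_0+1$), whereas the paper proves the two inequalities separately, obtaining the second one via the truncation $z=\max\{g_1^\ell(x),-a\}$ (resp.\ $\min\{g_1^r(x),\cdot\}$, where the paper's text has a typo writing $a$ for $b$; your version correctly uses $b$).
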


\begin{proof}
    (i) For $x\in [-k,k]$, $f_1(x) = g_1(x)$ and $f_2(x) = g_2(x)$, so $(f_1\mt f_2)(x) = \min(f_1(x),f_2(x)) = \min(g_1(x),g_2(x)) = (g_1 \mt g_2)(x)$, and similarly $(f_1\jn f_2)(x) = (g_1 \jn g_2)(x)$.
    
    (ii) Let $x\in [-k,k]$. 
    Then $f^\ell_1(x) \in [-a,a]$, so $x \leq f_1(f^\ell_1(x)) = g_1(f^\ell_1(x))$, hence $g^\ell_1(x)\leq f^\ell_1(x)$.
    For $z:=\max\{g^\ell_1(x), -a\}$, we have $z\in [-a,a]$ and $x \leq g_1(g^\ell_1(x)) \leq g_1(z) = f_1(z)$, hence $f^\ell_1(x)\leq z=\max\{g^\ell_1(x), -a\}$; since $-a < f^\ell_1(-k) \leq f^\ell_1(x)$ we get $f^\ell_1(x) \leq g_1^\ell(x)$, as well.
   
    (iii) Let $x\in [-k,k]$. Then $f^r_1(x) \in [-a,a]$, so $g_1(f^r_1(x)) = f_1(f^r_1(x))\leq x$, hence $f^r_1(x) \leq g^r_1(x)$.
    For $z:=\min\{g^r_1(x), a\}$, we have $z\in [-a,a]$ and $f_1(z) = g_1(z)  \leq g_1(g^r_1(x)) \leq x$, hence $ \min\{g^r_1(x), a\} = z \leq f^r_1(x)$; since $f^r_1(x) \leq f^r_1(k) < a$ we get $g_1^r(x) \leq f^r_1(x)$, as well.
    
    (iv) For $x\in [-k,k]$, we have $f_1(x) = g_1(x) \in [-c,c]$, so $f_2(f_1(x)) = g_2(g_1(x))$.
\end{proof}

For $f\in \mathrm{F}(\ZZ)$ and $\m{L} \leq \m{F}(\ZZ)$, we say that $\m{L}$ \emph{simulates} $f$ and write $f \precsim \m{L}$ if for each $k\in \NN$, there exists a $g\in L$ such that $f\sim_k g$.  
For $\m{L},\m{M} \leq \m{F}(\ZZ)$, we say that $\m{L}$ \emph{simulates} $\m{M}$ and write $\m{M} \precsim \m{L}$ if $\m{L}$ simulates each $h \in M$. 

For $f\in \mathrm{F}(\ZZ)$, $\m{M}\leq \m{F}(\ZZ)$ and $\m{L} \leq \m{F}(\m{J}\overrightarrow{\times} \ZZ)$, we say that $\m{L}$ \emph{simulates $f$ at $j\in J$} and write $f \precsim^j \m{L}$ if for each $k\in \NN$, there exists $g\in \loc{L}$ such that $\overline{g}_j \sim_k f$ (or short $g\sim_k^j f$); we say that $\m{L}$ \emph{simulates $\m{M}$ at $j\in J$} and write $\m{L} \precsim^j \m{M}$ if $\m{L}$ simulates each $h\in M$ at $j$. 
Finally, for a distributive $\ell$-pregroup $\m L$ and for $\m{M}\leq \m{F}(\ZZ)$ we write $\m{M} \precsim \m{L}$ and say that $\m L$ \emph{simulates} $\m  M$, if there exist a representation of $\m L$ with global chain $\m J$ and $j\in J$ such that $\m{M}\precsim^j \m{L}$.

\begin{lemma}\label{l:simulation_diagram}
    Let $\m{L} \leq \m{F}(\m{J} \overrightarrow{\times} \ZZ)$,  $j\in J$,  $f_1,\dots, f_n \in \mathrm{F}(\ZZ)$ and let $t(x_1,\dots, x_n)$ be a term. 
    If $\m{L}$ simulates $f_1,\dots, f_n$ at $j$, then  
    for each  $k \in \NN$, there exist $g_1,\dots, g_n \in \loc{L}$ such that $t(g_1,\dots, g_n) \sim_k^j t(f_1,\dots, f_n)$; thus $\m{L}$ simulates $t(f_1,\dots, f_n)$ at $j$.
\end{lemma}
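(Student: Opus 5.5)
We argue by structural induction on the term $t$, proving the following stronger statement. For every subterm $s$ of $t$ and every $k\in\NN$ there exist $g_1,\dots,g_n\in\loc{L}$ such that $s(g_1,\dots,g_n)\sim_k^j s(f_1,\dots,f_n)$.

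The key observation is that local elements behave coordinatewise. By Lemma~\ref{l:comp-is-wreath}, $\loc{\m F}(\m J\overrightarrow{\times}\ZZ)\cong \m F(\ZZ)^J$, and $\loc{\m L}$ is a subalgebra. Hence for $g_1,\dots,g_n\in\loc{L}$ the element $s(g_1,\dots,g_n)$ is again local, and its $j$th coordinate satisfies
\[
\overline{s(g_1,\dots,g_n)}_j=s(\overline{(g_1)}_j,\dots,\overline{(g_n)}_j).
\]
So the whole problem reduces to a statement inside $\m F(\ZZ)$. We must choose $g_i\in\loc{L}$ whose $j$th coordinates approximate the $f_i$ on a large enough window, and then propagate the approximation through the operations using Lemma~\ref{l:simulation_connectives}.

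The induction itself runs as follows, with $k$ universally quantified at each stage.

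\textbf{Variables.} If $s=x_i$, the claim is exactly the hypothesis $f_i\precsim^j\m L$; the other $g$'s are arbitrary, say $1$.

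\textbf{Constant.} If $s=1$, any choice works.

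\textbf{Join and meet.} For $s=s_1\bullet s_2$ with $\bullet\in\{\mt,\jn\}$ we use Lemma~\ref{l:simulation_connectives}(i). If $s_1$ and $s_2$ are approximated at level $k$, so is $s_1\bullet s_2$.

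\textbf{Residuals.} For $s=s_1^\ell$ we set $a=a(s_1(f_1,\dots,f_n),k)$, approximate $s_1$ at level $a$, and apply Lemma~\ref{l:simulation_connectives}(ii). The case $s=s_1^r$ is the same, using $b$ and part (iii).

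\textbf{Product.} For $s=s_2 s_1$ we approximate $s_1$ at level $k$ and $s_2$ at level $c=c(s_1(f_1,\dots,f_n),k)$. Then Lemma~\ref{l:simulation_connectives}(iv) applies.

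The main obstacle is that the inductive hypothesis hands us a separate tuple $(g_i)$ for each subterm. A binary operation, however, needs a single tuple that works for both subterms simultaneously. The fix is monotonicity: $\sim_{k'}$ implies $\sim_k$ whenever $k'\ge k$, since agreement on $[-k',k']$ implies agreement on $[-k,k]$.

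We therefore strengthen the claim so that the required precision is computed in advance. For each subterm $s$ and each $k$ we define, by recursion on $t$ from the top down, a level $N(s,k)\in\NN$. This level depends only on the $f_i$, because the quantities $a,b,c$ are computed from the fixed functions $s'(f_1,\dots,f_n)$. It has the property that if $\overline{(g_i)}_j\sim_{N(s,k)} f_i$ for every variable $x_i$ occurring in $s$, then $s(g)\sim^j_k s(f)$. At a binary node we take the maximum of the levels demanded by the two children, and monotonicity of $\sim$ justifies this.

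Finally, let $K$ be the maximum of the levels required at the leaves of $t$ for the target $k$. For each $i$ we choose a single $g_i\in\loc{L}$ with $g_i\sim_K^j f_i$, which is possible since $\m L$ simulates $f_i$ at $j$. The inductive claim then yields $t(g_1,\dots,g_n)\sim_k^j t(f_1,\dots,f_n)$. Since $t(g_1,\dots,g_n)\in\loc{L}$ and $k$ was arbitrary, $\m L$ simulates $t(f_1,\dots,f_n)$ at $j$.
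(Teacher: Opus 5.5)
Your proof is correct and is essentially the paper's argument. Precision levels are propagated from the root to the leaves using the bounds $a$, $b$, $c$ of Lemma~\ref{l:simulation_connectives}, computed from the fixed functions $s(f_1,\dots,f_n)$; one approximant $g_i\in\loc{L}$ is chosen at the largest level required at the leaves; and a bottom-up induction, using monotonicity of $\sim_k$ and the coordinatewise behaviour of local elements, finishes the argument.

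Your product case is the correct orientation for $fg=f\circ g$: the inner factor is taken at level $k$ and the outer factor at level $c$ computed from the inner target function. The paper's write-up swaps the roles of $t_1$ and $t_2$ there, and it uses tacitly the monotonicity at the leaves that you state explicitly.
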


\begin{proof}
    Abusing the notation slightly we will write $\sim_k$ for $\sim_k^j$.
    We will perform term induction on $t$, but first we explain what the maps $g_1, \ldots, g_n$ are taken to be; $g_i$ is obtained by the fact that $\m L$ simulates $f_i$ for $k_i$ (i.e., $g_i \sim_{k_i} f_i$), where the value of $k_i$ is defined below and depends on $k$ and $f_1,\ldots, f_n$. 
    We compute the value of $k_i$ by first assigning values $k(s) \in \NN$ for every occurrence of a subterm $s$ of $t$ recursively, guided by the values in Lemma~\ref{l:simulation_connectives}; 
    we adopt the simple notation $k(s)$ even though the value depends not (only) on $s$ but on its position in the term tree of $t$. For the root, $t$, we set $k(t):=k$.  
    If $t=t_1 \mt t_2$ or $t=t_1 \jn t_2$, then $k(t_1):=k(t)$ and $k(t_2):=k(t)$. 
    If $t=s^\ell$, then $k(s)=a(s(f_1, \ldots, f_n), k(t))$. If $t=s^r$, then $k(s):=b(s(f_1, \ldots, f_n), k(t))$. 
    If $t=t_1 \cdot t_2$, then $k(t_1):=k(t)$ and  $k(t_2):=c(s(f_1, \ldots, f_n), k(t))$. 
    We stress that $k(s)$ is assigned to all the nodes $s$ in the term tree of $t$, so $k(s)$ may have different values for different occurrences of $s$ in the term tree of $t$; we define $k_i$ to be the maximum of the values $k(x_i)$, where the variable $x_i$ appears (at a leaf node) in the term tree of $t$.  
    
    Now we prove by induction (starting from the leafs) that for each node $s$, $s(g_1,\dots,g_n)\sim_{k(s)} s(f_1,\dots,f_n)$.
    If $s=x_i$ is a leaf (hence a variable), then $k(s) = k_i$ and it follows from the assumption that there is a $g_i$ such that $g_i \sim_{k_i} f_i$. 
   If $s=s_1 \cdot s_2$, then by the inductive hypothesis, $s_1(g_1,\dots,g_n)\sim_{k(s_1)} s_1(f_1,\dots,f_n)$ and $s_2(g_1,\dots,g_n)\sim_{k(s_2)} s_2(f_1,\dots,f_n)$. 
   Also, by definition $k(s_1)=k(s)$ and  $k(s_2)=c(s(f_1, \ldots, f_n), k(s))$, so by Lemma~\ref{l:simulation_connectives} we get $s(g_1,\dots,g_n)\sim_{k(s)} s(f_1,\dots,f_n)$. 
   The inductive step for the other connectives is very similar.
    \end{proof}

\begin{corollary}\label{c: simgen}
If a distributive $\ell$-pregroup $\m L$ 
simulates a generator of\/ $\m M \leq \m{F}(\ZZ)$, 
then $\m L$ simulates $\m M$.        
\end{corollary}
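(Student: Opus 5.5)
The plan is to reduce the statement directly to Lemma~\ref{l:simulation_diagram}. By definition, $\m L$ simulating a generator $f$ of $\m M$ means that there is a representation $\m L \leq \m F(\m J \overrightarrow{\times} \ZZ)$ and a $j \in J$ with $f \precsim^j \m L$. We fix this representation and this $j$ throughout. Then we show that every $h \in M$ satisfies $h \precsim^j \m L$; this gives $\m M \precsim^j \m L$, hence $\m M \precsim \m L$ in the sense of the last definition.

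First, since $f$ generates $\m M$, every $h \in M$ has the form $h = t(f)$ for some unary term $t(x)$ in the language $\{\mt, \jn, \cdot, 1, {}^\ell, {}^r\}$. Next, we apply Lemma~\ref{l:simulation_diagram} with $n = 1$, $f_1 = f$ and this term $t$. Its hypothesis is that $\m L$ simulates $f$ at $j$, which holds. Its conclusion is that for each $k$ there is $g \in \loc{L}$ with $t(g) \sim_k^j t(f) = h$. Because $\loc{\m L}$ is a subalgebra of $\m L$, $t(g) \in \loc{L}$, so this is exactly $h \precsim^j \m L$.

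The only point that needs checking is the constant $1$, i.e.\ the case where $t$ contains $1$ (or equals $1$). The induction in Lemma~\ref{l:simulation_diagram} treats the connectives, so I would verify this base case directly. The element $1 \in \loc{L}$ is the identity, with $\overline{1}_j = id_\ZZ$, and so $1 \sim_k^j 1$ for every $k$. This settles the constant leaf, and the inductive steps for the operations proceed as in the lemma. Since the lemma is already proved, I expect no real obstacle beyond making sure that the same representation and the same $j$ are used for all $h$, which holds because they are fixed at the outset.
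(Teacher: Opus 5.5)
Your proposal is correct and follows the paper's proof: you write each $h \in M$ as $t(f)$ for a unary term $t$ and apply Lemma~\ref{l:simulation_diagram} with $n=1$, keeping the representation and $j$ fixed. The paper leaves your extra checks implicit, and they are correct: the constant $1$ as a leaf, and $t(g)\in\loc{L}$ because $\loc{\m L}$ is a subalgebra.
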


\begin{proof}
Let $f$ be a generator of $\m M$ and let $g \in M$. 
Then there exists a term $t_g$ such that $g=t_g(f)$, hence $\m L$ simulates $g=t_g(f)$, by Lemma~\ref{l:simulation_diagram}.  
\end{proof}

\begin{proposition}\label{p: simvar}
        If a distributive $\ell$-pregroup $\m L$  simulates  $\m{M}\leq \m{F}(\ZZ)$, then $\vr(\m{M}) \subseteq \vr(\m{L})$, i.e., if\/ $\m{M}\precsim \m{L}$, then $\vr(\m{M}) \subseteq \vr(\m{L})$.
\end{proposition}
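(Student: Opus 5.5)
We prove the contrapositive in terms of equations: every equation $s \approx t$ that fails in $\m M$ also fails in $\m L$. Since varieties are determined by their equational theories, this gives $\vr(\m M) \subseteq \vr(\m L)$.

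Fix a representation $\m L \leq \m F(\m J \overrightarrow{\times} \ZZ)$ and $j \in J$ with $\m M \precsim^j \m L$, as in the definition. Suppose $s(x_1,\dots,x_n) \approx t(x_1,\dots,x_n)$ fails in $\m M$. Then there are $f_1,\dots,f_n \in M$ with $s(f_1,\dots,f_n) \neq t(f_1,\dots,f_n)$ as functions on $\ZZ$. Hence there is some $x \in \ZZ$ where they differ; choose $k \in \NN$ with $x \in [-k,k]$. By hypothesis $\m L$ simulates each $f_i$ at $j$.

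We apply Lemma~\ref{l:simulation_diagram} to the single term $s\cdot t$ (or, equivalently, to the pair of terms handled together). The point is that the $g_i$ must be the same for both $s$ and $t$. The proof of that lemma only requires each $g_i$ to satisfy $g_i \sim_{k_i}^j f_i$ for a sufficiently large $k_i$. We therefore take $k_i$ to be the maximum of the values required for $s$ and for $t$, computed by the recursion in that proof. Since $\sim_{k'}$ refines $\sim_k$ for $k' \geq k$, a single choice of $g_1,\dots,g_n \in \loc{L}$ works for both terms. We then obtain
\[
s(g_1,\dots,g_n) \sim_k^j s(f_1,\dots,f_n) \quad\text{and}\quad t(g_1,\dots,g_n) \sim_k^j t(f_1,\dots,f_n).
\]

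Because the $g_i$ are local, $\loc{\m L}$ is a subalgebra and the map $g \mapsto \overline{g}_j$ is a homomorphism on local elements (Lemma~\ref{l:comp-is-wreath}, via $\loc{\m F}(\m J\overrightarrow{\times}\ZZ)\cong \m F(\ZZ)^J$). It follows that $s(g_1,\dots,g_n)$ is local and that
\[
\overline{s(g_1,\dots,g_n)}_j = s(\overline{g_1}_j,\dots,\overline{g_n}_j),
\]
and likewise for $t$. Consequently $\overline{s(\vec g)}_j(x) = s(\vec f)(x) \neq t(\vec f)(x) = \overline{t(\vec g)}_j(x)$. Thus $s(\vec g) \neq t(\vec g)$ in $\m L$, so the equation fails in $\m L$.

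The only delicate step is this uniformity of the choice of $g_i$ across the two terms. We handle it by taking the maximum of the required precisions; this works because $\sim_{k'}$ refines $\sim_k$ for $k' \geq k$.
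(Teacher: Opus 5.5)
Your proof is correct and follows essentially the same route as the paper: a failure witness in $\m M$ is transferred, via Lemma~\ref{l:simulation_diagram}, to local elements of $\m L$ and read off at coordinate $j$. The differences are minor. The paper works with an equation $1\leq t$, so there is a single term and no uniformity issue, and it shifts the failure point to $0$ using $h\mapsto h^{[-m]}$ so that $k=1$ suffices. You instead treat $s\approx t$ directly and take maximal precisions, which is fine. One small caveat: your parenthetical alternative via $s\cdot t$ is not quite right as stated, since with root precision $k$ one factor only receives precision $c(\cdot,k)$, which can be smaller than $k$; using $s\mt t$ would work directly.
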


\begin{proof}
    If an equation $1 \leq t(x_1,\dots, x_n)$ fails in $\m{M}$, then there exist $f_1,\dots, f_n \in M$ and $m \in \ZZ$ such that $m \nleq t(f_1,\dots, f_n)(m)$, yielding 
    \[
    0 < t(f_1,\dots,f_n)(m) -m = t(f_1,\dots,f_n)^{[-m]}(0) = t(f_1^{[-m]},\dots,f_n^{[-m]})(0),
    \] 
    since $h\mapsto h^{[-m]}$ is an automorphism. Hence by considering $f_i^{[-m]}$ instead of $f_i$ we can assume $m=0$. 
    Now, by Lemma~\ref{l:simulation_diagram}, there exist $g_1,\dots, g_n \in \loc{L}$ (for a representation $\m{L}\leq \m{F}(\m{J}\overrightarrow{\times}\ZZ)$) such that $t(g_1,\dots, g_n) \sim_1^j t(f_1,\dots, f_n)$, so $(j,0) \nleq t(g_1,\dots, g_n)(j,0)$, i.e., $1 \leq t(x_1,\dots, x_n)$ fails in $\m L$.
\end{proof}

\begin{lemma}\label{l:dlp-simulation}
Let $\m{L}$ be a distributive $\ell$-pregroup.
\begin{enumerate}[label = \textup{(\roman*)}]
    \item If $\cc\precsim \m{L}$, then $\vr(\m{L}) = \va{DLP}$.
    \item If $f\in  \mathrm{F}(\ZZ)$, $\per(f) = n$, $f\neq 1$, and $f\precsim \m{L}$, then $\vr(\m{F}_n(\ZZ))\subseteq \vr(\m{L})$. 
\end{enumerate}
\end{lemma}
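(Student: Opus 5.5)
Both parts follow one pattern: the simulation hypothesis on a single generator is upgraded, via Corollary~\ref{c: simgen}, to a simulation of the whole subalgebra it generates. Proposition~\ref{p: simvar} then turns that into a variety inclusion, and known generation results finish the argument.

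For (i), assume $\cc \precsim \m{L}$. Unwinding the last definition of simulation, there is a representation $\m{L}\leq \m{F}(\m{J}\overrightarrow{\times}\ZZ)$ and some $j\in J$ with $\cc\precsim^j \m{L}$. By Lemma~\ref{l:Ffs-gen}, $\cc$ generates $\m{F}_\mathrm{fs}(\ZZ)$, so $\cc$ is a generator of $\m M:=\m{F}_\mathrm{fs}(\ZZ)\leq \m{F}(\ZZ)$. Corollary~\ref{c: simgen} then gives $\m{F}_\mathrm{fs}(\ZZ)\precsim \m{L}$, and by Proposition~\ref{p: simvar},
\[
\vr(\m{F}_\mathrm{fs}(\ZZ))\subseteq \vr(\m{L}).
\]
Proposition~\ref{p:DLP-gen-fs} identifies the left-hand side with $\va{DLP}$. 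Since $\m L$ is distributive, $\vr(\m L)\subseteq\va{DLP}$, and so $\vr(\m{L})=\va{DLP}$.

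For (ii), let $f\in\mathrm{F}(\ZZ)$ be a non-identity element with $\per(f)=n$. By Proposition~\ref{p:Fn-LPn-gen}(i), $\langle f\rangle=\m{F}_n(\ZZ)$, so $f$ is a generator of $\m M:=\m{F}_n(\ZZ)\leq\m{F}(\ZZ)$. Since $f\precsim\m{L}$, Corollary~\ref{c: simgen} gives $\m{F}_n(\ZZ)\precsim\m{L}$, and Proposition~\ref{p: simvar} yields $\vr(\m{F}_n(\ZZ))\subseteq\vr(\m{L})$.

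There is no real obstacle; the only point needing care is bookkeeping. Corollary~\ref{c: simgen} and Proposition~\ref{p: simvar} are stated for simulation of a subalgebra of $\m F(\ZZ)$ in the sense fixed at a single representation and a single $j$. So all elements of $\m M$ must be simulated at the \emph{same} $j$, and this is exactly what Lemma~\ref{l:simulation_diagram} provides, because it produces the required local $g_i$ at the $j$ fixed by the simulation of the generator.
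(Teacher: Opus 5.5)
Your proposal is correct and matches the paper's proof step for step. $\cc$ (resp.\ $f$) generates $\m{F}_\mathrm{fs}(\ZZ)$ (resp.\ $\m{F}_n(\ZZ)$) by Lemma~\ref{l:Ffs-gen} (resp.\ Proposition~\ref{p:Fn-LPn-gen}(i)), Corollary~\ref{c: simgen} upgrades this to simulation of the whole subalgebra, and Proposition~\ref{p: simvar} (together with Proposition~\ref{p:DLP-gen-fs} for (i)) gives the variety inclusion; your remark about keeping the same $j$ is also accurate, since Lemma~\ref{l:simulation_diagram} produces the local elements at the fixed coordinate.
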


\begin{proof}
(i) By Proposition~\ref{l:Ffs-gen}, $\m{F}_\mathrm{fs}(\ZZ)$ is  generated by $\cc$, so, by Corollary~\ref{c: simgen},  $\m{F}_\mathrm{fs}(\ZZ) \precsim \m{L}$. Therefore, by   Proposition~\ref{p: simvar}, $\vr(\m{F}_\mathrm{fs}(\ZZ)) \subseteq \vr(\m{L})$. Since  $\vr(\m{F}_\mathrm{fs}(\ZZ)) = \va{DLP}$ by Proposition~\ref{p:DLP-gen-fs}, we get $\vr(\m{L}) = \va{DLP}$.

(ii) As in (i), using Proposition~\ref{p:Fn-LPn-gen}(i) and Corollary~\ref{c: simgen}, we get that $\m L$ simulates $\m{F}_n(\ZZ)$, hence $\vr(\m{F}_n(\ZZ))\subseteq \vr(\m{L})$ by Proposition~\ref{p: simvar}.
\end{proof}

\section{Proper subvarieties of \texorpdfstring{$\mathsf{DLP}$}{DLP}}\label{s: main}

In this section we establish the main result that every non-periodic distributive $\ell$-pregroup generates all of $\mathsf{DLP}$, by using the dichotomy established in Lemma~\ref{l:pos-idem-non-periodic}. One of the two cases of the dichotomy (arbitrary big periodicities) can already be handled with results of  Section~\ref{s: prelims}; here we rely on the notion of simulation and develop the auxiliary results needed for the other case. 

Note that for $x = (i,k) \in J\times \ZZ$ and $n\in \ZZ$, we have $x +n := (i,k+n)$. 
Also, we define the term $\slt(y) := y^{\ell\ell\ell}y \jn 1$.
The \emph{maximal height} of a local element $f\in \mathrm{F}(\m{J} \overrightarrow{\times} \ZZ)$ is the maximum of the set $\{\overline{f}_j(k)-k : j \in J, k \in \ZZ\}$, if it exists.

We will use the following lemma right away for the case where $\m J$ is a singleton, and also later for arbitrary $\m J$.

\begin{lemma}\label{l:pos-idem-slope1}
    If $f\in \mathrm{F}(\m{J} \overrightarrow{\times} \ZZ)$ is a strictly positive idempotent, then $\delta(f)$ is a local element satisfying
    \[
    \delta(f)(x) = \begin{cases}
        x +1 & \text{if } x < f(x) \text{ and } f^{\ell}(f(x)) = x, \\
        x & \text{otherwise.}
    \end{cases}
    \]
    In particular, $\delta(f)$ is strictly positive idempotent with maximal height $1$. 
\end{lemma}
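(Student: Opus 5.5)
The plan is to compute $\slt(f)=f^{\ell\ell\ell}f\jn 1$ pointwise, using the discrete-chain formula $f^{[n]}(x)=f(x-n)+n$ (Lemma~2.3 of \cite{GG2}) applied to $f^\ell$. Since $f^{\ell\ell\ell}=(f^\ell)^{\ell\ell}=(f^\ell)^{[1]}$, we get $f^{\ell\ell\ell}(y)=f^\ell(y-1)+1$. Hence for every $x\in J\times\ZZ$,
\[
\slt(f)(x)=\max\{x,\ f^\ell(f(x)-1)+1\}.
\]
The whole proof then reduces to evaluating $f^\ell(f(x)-1)=\min\{z : f(x)-1\leq f(z)\}$ in terms of the fibre of $f$ through $x$.

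Fix $x$ and put $m:=f^\ell(f(x))=\min\{z: f(x)\le f(z)\}$. This is the least element of the fibre $f^{-1}[f(x)]$, so $m\le x$, $f(m)=f(x)$ and $f(m-1)<f(x)$. Every $z\ge m$ satisfies $f(z)\ge f(x)-1$. Moreover, $z=m-1$ (and hence some $z<m$) satisfies $f(z)\geq f(x)-1$ exactly when $f(m-1)=f(x)-1$. I therefore split into two cases.
\begin{itemize}
\item If $f(m-1)<f(x)-1$, then $f^\ell(f(x)-1)=m$, and the second term equals $m+1$.
\item If $f(m-1)=f(x)-1$, then $f^\ell(f(x)-1)\le m-1$, and the second term is at most $m\le x$.
\end{itemize}
So $\slt(f)(x)\in\{x,x+1\}$, and $\slt(f)(x)=x+1$ iff $m=x$ and $f(x-1)<f(x)-1$.

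The key step is to match this with the stated condition ``$x<f(x)$ and $f^\ell(f(x))=x$''. This uses that $f$ is a positive idempotent, i.e.\ a closure operator whose image consists of fixed points.
\begin{itemize}
\item Suppose $m=x<f(x)$ and, for a contradiction, $f(x-1)=f(x)-1$. Then $f(x)-1$ is in the image, hence a fixed point. Since $x\le f(x)-1$, monotonicity gives $f(x)\le f(f(x)-1)=f(x)-1$, which is absurd.
\item Conversely, suppose $f(x)=x$. Then $m=x$, and positivity gives $x-1\le f(x-1)\le x-1=f(x)-1$. So $f(x-1)=f(x)-1$, we are in the second case, and $\slt(f)(x)=x$.
\end{itemize}
This establishes the displayed formula. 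I expect this verification to be the main point needing care, namely that the ``$f(m-1)=f(x)-1$'' case is exactly excluded by idempotency.

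The remaining claims follow quickly.
\begin{itemize}
\item Locality: $x+1$ lies in the same component $\{i\}\times\ZZ$ as $x$, so $\widetilde{\slt(f)}=id_J$. Hence $\slt(f)$ is local with all heights in $\{0,1\}$.
\item Idempotency: if $\slt(f)(x)=x+1$, then $x$ and $x+1$ share the fibre of $f$, since $x<f(x)$ forces $f(x+1)=f(x)$ (both are at most $f(f(x))=f(x)$ and at least $f(x)$). Thus $f^\ell(f(x+1))=x\ne x+1$, so $\slt(f)(x+1)=x+1$, and $\slt(f)^2=\slt(f)$.
\item Strict positivity and height exactly $1$: since $f>1$, there is some $y$ with $y<f(y)$. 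The minimum $x$ of its fibre then satisfies $x<f(x)=f(y)$ and $f^\ell(f(x))=x$, so $\slt(f)(x)=x+1$.
\end{itemize}
Membership in $\mathrm{F}(\m J\overrightarrow{\times}\ZZ)$ is automatic, since $\slt(f)$ is given by a term.
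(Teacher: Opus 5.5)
Your proof is correct and follows essentially the paper's route: both write $\delta(f)(x)=\max\{x,\,f^\ell(f(x)-1)+1\}$ using $f^{\ell\ell\ell}=(f^\ell)^{[1]}$, and both split according to whether the fibre of $f$ through $x$ is a singleton. Your case $f(m-1)=f(x)-1$ is exactly the paper's case $f(x)=f^\ell(x)$, and you also spell out the locality, idempotency and height claims that the paper leaves implicit.

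One small slip: $f(x)=x$ does not force $m=x$ (for $\cc$ and $x=1$ one has $\cc(1)=1$ but $\cc^\ell(1)=0$). This is harmless, because your characterization already gives $\delta(f)(x)=x$ when $m<x$, so your second bullet only needs to handle the case $m=x$.
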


\begin{proof}
We first show that
    for each $x\in J\times \ZZ$,
     \begin{enumerate}[label = \textup{(\roman*)}]
            \item If $f(x) \neq f^\ell(x)$, then $(f^{\ell\ell\ell}f)(x) = f^\ell(x) + 1$, so  $\slt(f)(x)\leq x+1$.
            \item If $f(x) = f^\ell(x)$, then $(f^{\ell\ell\ell}f)(x) \leq x$, so $\slt(f)(x)=x$.
            \end{enumerate}
The proof is the same as the proof a weaker claim (for $\m J$ a singleton) in the proof of Lemma~3.21 of \cite{GS}; for completeness, we repeat the proof, noting that it works for arbitrary $\m J$. 
First note that since $f$ is positive and idempotent, we have $f f^\ell=f$ and $f^\ell f=f^\ell$, by Remark~2.2 of \cite{GS}; 
so for all $x \in J \times \ZZ$, we have $f^\ell(x)= f^\ell(f(x)) =\min f^{-1}[f(x)]$, hence also $f^\ell(x)\leq x \leq f(x)$.

For (i), if $f(x) \neq f^\ell(x)$ then
    $f^\ell(x) < f(x)$, hence $f^\ell(x) \leq  f(x)-1$; 
    so $f(x) =f(f^\ell(x))\leq f(f(x)- 1) \leq f(f(x))=f(x)$, thus $f(f(x)- 1) = f(x)$.
    Therefore, 
    \[
    f^{\ell\ell\ell}(f(x)) = f^\ell(f(x)-1) + 1 = f^\ell f(f(x)-1) + 1 = f^\ell f(x) + 1= f^\ell(x) + 1.
    \]
    Finally, since $f^\ell(x)\leq x$, we get $(f^{\ell\ell\ell}f)(x) = f^\ell(x) + 1\leq x +1$.
        
    For (ii), note that if  $f(x) = f^\ell(x)$, then $f^\ell(x)\leq x \leq f(x)$ yields $f(x) = x$.
    Hence, since $f^\ell \leq 1$,
    \[
    f^{\ell\ell\ell}(f(x)) = f^{\ell\ell\ell}(x) = f^{\ell}(x-1) + 1 \leq x-1 + 1 = x.
    \]
This shows (i) and (ii). 
Now we are ready to show the lemma; let $x\in J\times \ZZ$. 

If $x < f(x)$ and $f^\ell(f(x)) = x$, then $f^\ell(x) = f^\ell(f(x)) = x$, so $f^\ell(x) < f(x)$; by (i), we get $(f^{\ell\ell\ell}f)(x) = f^\ell(x) + 1 = x +1$, i.e., $\delta(f)(x) = x +1$. 
Otherwise, we have $x\geq f(x)$ or $f^\ell(f(x)) \neq x$; we distinguish further cases. 

If $f^\ell(x) = f^\ell(f(x)) \neq x$, then, by the positivity of $f$, we get $f^\ell(x) < x$, i.e., $f^\ell(x) + 1 \leq x$, so in either case (i) or (ii) yields $\delta(f)(x) = x$. 

If $f^\ell(x) = f^\ell(f(x)) = x$ and $x\geq f(x)$, then $f(x) = x$, by the positivity of $f$, hence $f(x) = x = f^\ell(x)$; by (ii) we get $\delta(f)(x) = x$.
\end{proof}

\begin{lemma}\label{l:simulate_fs_eventually_const}
    Let $f$ be a strictly positive element of\/ $\mathrm{F}(\ZZ)$.
    \begin{enumerate}[label = \textup{(\roman*)}]
        \item If there is a $k\in \ZZ$ such that $f(l) = l$ for all  $l> k$, then $\cc \precsim \langle f \rangle$.
        \item If there is a $k\in \ZZ$ such that $f(l) = l$ for all  $l < k$, then $\cc \precsim \langle f \rangle$.
    \end{enumerate}
\end{lemma}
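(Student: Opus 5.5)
The plan for (i) is to locate a single point where $f$ looks exactly like $\cc$, and then to cut away, inside $\langle f\rangle$, everything $f$ does below that point on any prescribed finite window; part (ii) will follow by a symmetry argument.

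\emph{Step 1 (locating the jump).} Since $f>1$ is not the identity and $f(l)=l$ for all $l>k$, there is a largest $x_0$ with $f(x_0)>x_0$. Then $f(x_0+1)=x_0+1$, so $x_0<f(x_0)\le f(x_0+1)=x_0+1$, i.e.\ $f(x_0)=x_0+1$. Replacing $f$ by $f^{[-x_0-1]}$ (an automorphism of $\m F(\ZZ)$ that maps $\langle f\rangle$ onto $\langle f^{[-x_0-1]}\rangle$) we may assume $x_0=0$. So $f(0)=1$, $f$ is the identity on $[1,\infty)$, and $f\ge 1$ everywhere. To normalize the behaviour below $0$, I would also replace $f$ by $\delta(ff^\ell)$ using Lemma~\ref{l:pos-idem-slope1}. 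This gives a strictly positive idempotent of maximal height $1$ that still fixes $[1,\infty)$ and sends $0\mapsto 1$, since $0$ is the minimum of its $ff^\ell$-block and $ff^\ell(0)=1$. Call this element $p\in\langle f\rangle$. Every point moved by $p$ is moved by exactly $+1$.

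\emph{Step 2 (killing the moves in $[-N,-1]$).} For a fixed $N$ we need $g\in\langle p\rangle$ with $g\sim_N\cc$, i.e.\ $g(0)=1$ and $g$ the identity on $[-N,-1]\cup[1,N]$. The idea is to meet $p$ with a positive element $q\in\langle p\rangle$ such that $q(0)\ge 1$, $q$ is the identity on $[-N,-1]$, and $q\ge 1$. Then $p\mt q$ agrees with $\cc$ on $[-N,N]$ by the pointwise computation of Lemma~\ref{l:simulation_connectives}(i), because $p$ already fixes $[1,N]$.

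To build $q$, I would use the shifts $p^{[j]}$, which fix $[j+1,\infty)$ and send $j\mapsto j+1$, together with residuals. Consider a product such as $q=p^{[2N+1]}\cdots$ or a term of the form $(p^{[M]})^{\ell}p^{[M']}\jn 1$. Height $1$ makes these compositions explicitly computable, block by block, in the spirit of the proof of Lemma~\ref{l:Ffs-gen}. The aim is a product that moves the block structure of $p$ on $[-N,-1]$ back to the identity while keeping $0$ moved. Since $p$ has height $1$ and is idempotent, its moved points in $[-N-1,0]$ form a set $S$ of isolated points. An alternative route is to conjugate: compose $p$ on the right with a suitable left residual of a shift of $p$. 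That makes $p$ act like a map that "skips" the points of $S\cap[-N,-1]$ while fixing $0$'s move, and then one takes the join with $1$.

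\emph{Step 3 (conclusion and part (ii)).} Step 2 yields $\cc\sim_N g\in\langle f\rangle$ for every $N$, so $\cc\precsim\langle f\rangle$. For (ii), apply the order-reversing symmetry $x\mapsto -x$ of $\ZZ$: conjugating by it induces an isomorphism of $\m F(\ZZ)$ onto its dual, swapping $\mt/\jn$ and ${}^\ell/{}^r$. Under it $f$ becomes a strictly negative map fixing $(-\infty,-k)$, so inverting via $^\ell$ and joining with $1$ returns to the situation of (i). Its image of $\cc$ is a shift of $\cc^{\ell}$ or $\cc^{r}$, which generates $\cc$ again.

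I expect the main obstacle to be Step 2. Exhibiting an explicit term that isolates the single jump at $0$ from the possibly infinitely many jumps of $p$ below $0$ requires care, since meets with shifts of $p$ alone only intersect moved sets and do not remove them. The first thing I would try is a meet $p\mt(p^{[-N-1]})^{r}p^{\ell\ell}$-type expression, checked via the explicit height-$1$ description of $p$.
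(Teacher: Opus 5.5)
\textbf{The genuine gap is Step~2.} You state the goal and list candidate shapes ($p\mt q$ for an unspecified $q$, a single $(p^{[M]})^{\ell}p^{[M']}\jn 1$, or $p\mt(p^{[-N-1]})^{r}p^{\ell\ell}$), but you never construct or check a term, and this is where the content of the lemma lies.

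Finding your $q$ (positive, $q(0)\geq 1$, identity on $[-N,-1]$) is essentially the original problem restated. A single shifted residual of a height-$1$ map lowers points by at most one, so it cannot in general absorb moves of $p$ scattered through $[-N,-1]$. Your ``alternative route'' of composing with a left residual of a shift is the right ingredient, but the paper uses a \emph{staircase} of $N-1$ of them, composed on the left of $p$.

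Here is that construction. Let $p$ be a height-$1$ idempotent with $p(0)=1$ that fixes $[1,\infty)$.
\begin{enumerate}
\item Idempotency forces $p(-1)=-1$: otherwise $p(-1)=0$ and $p(0)=p(p(-1))=0$. Hence $p^\ell(1)=0$, and $p^\ell$ fixes $[2,\infty)$.
\item For $i\geq 2$, $(p^\ell)^{[-i]}$ fixes $[0,\infty)$ and sends $-i+1\mapsto -i$. So $q_N:=(p^\ell)^{[-N]}\cdots(p^\ell)^{[-2]}$ fixes $[0,\infty)$ and satisfies $q_N(-1)=-N$.
\item For $x\in[-N,-1]$, monotonicity gives $q_Np(x)\leq q_N(p(-1))=-N\leq x$. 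On $[0,\infty)$ we have $q_Np=p$.
\item Therefore $q_Np\jn 1\sim_N\cc$.
\end{enumerate}
So the unwanted moves are not removed by a meet. They are pushed below the diagonal by composition and then erased by the join with $1$.

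\textbf{Step~1 is also wrong as written.} $ff^\ell(0)$ is the least element of $f[\ZZ]$ that is $\geq 0$, so $ff^\ell(0)=1$ requires $0\notin f[\ZZ]$, which can fail. For $f(x)=x+1$ ($x\leq 0$), $f(x)=x$ ($x\geq 1$), $f$ is onto, so $ff^\ell=1$ and $p=\slt(1)=1$. Even when $0\notin f[\ZZ]$, the $ff^\ell$-block of $1$ can contain points below $0$ (e.g.\ $f(-1)=f(0)=1$, $f(-2)=-2$), and then $\slt$ relocates the jump.

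A repair:
\begin{enumerate}
\item Take $e:=f^rf$, i.e.\ $x\mapsto\max f^{-1}[f(x)]$. It is a positive idempotent in $\langle f\rangle$. It is strictly positive because $f(x_0)=f(x_0+1)$, and it still fixes $(k,\infty)$ because $f\geq 1$.
\item By Lemma~\ref{l:pos-idem-slope1}, $\slt(e)$ is a strictly positive height-$1$ idempotent fixing $(k,\infty)$.
\item Only now shift so that the largest moved point $x_1$ of $\slt(e)$ goes to $0$. The shift is $[-x_1]$; your $[-x_0-1]$ is off by one.
\end{enumerate}
The paper applies $\slt$ to $f$ directly, which tacitly uses that $f$ is idempotent, as it is in every application. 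So your instinct to pass to an idempotent first is sound; only the choice $ff^\ell$ is wrong.

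Your duality argument for (ii) works, with one slip: $x\mapsto -f(-x)$ fixes the final segment $(-k,\infty)$, not $(-\infty,-k)$. That is exactly what puts its left residual in case~(i).
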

\begin{proof}
    (i) By Lemma~\ref{l:pos-idem-slope1}, $g:= \slt(f)$ has maximal height $1$ and is positive idempotent. We first show that $g(l+1) = l+1$ for all $l>k$. Indeed, for each $l>k$,
    \[
    f^\ell(l) = \min \{ x \in \ZZ : l\leq f(x) \} \leq l,
    \]
    hence 
    \[
    f^{\ell\ell\ell}(f(l+1)) = f^{\ell\ell\ell}(l+1) = f^\ell(l+1 -1) +1 \leq l+1,
    \]
    i.e., $g(l+1) = l+1$ for each $l>k$. 
    So by considering a suitable shift of $g$ (i.e., $g^{[m]}$ for some $m\in \ZZ$) we may assume that 
    $g(l+1) = l+1$ for each $l>0$ and $g(0) \neq 0$; since $g$ is positive, we get $g(0) = 1$, and since it  has maximal height $1$ and is positive idempotent we get $g(-1) = -1$; hence $g^\ell(1) = 0$. 
    Now for each $n\geq 2$ we define $g_n = (g^\ell)^{[-n]}(g^\ell)^{[-(n-1)]}\cdots (g^\ell)^{[-2]}$ and let $x\in [-n,n]$. For $x\geq 0$,  we have 
    \[
    g_ng(x) = g^{[-n]}g^{[-(n-1)]}\cdots g^{[-2]}g(x) = g(x),
    \]
    while for $x < 0$, we get
    \begin{align*}
            g_ng(x) &\leq g_ng(-1) \\
            &= (g^\ell)^{[-n]}(g^\ell)^{[-(n-1)]}\cdots (g^\ell)^{[-2]}g(-1)\\
            &= (g^\ell)^{[-n]}(g^\ell)^{[-(n-1)]}\cdots (g^\ell)^{[-2]}(-1)\\
            &= (g^\ell)^{[-n]}(g^\ell)^{[-(n-1)]}\cdots (g^\ell)^{[-3]}(g^\ell(1) -2) \\
            &= (g^\ell)^{[-n]}(g^\ell)^{[-(n-1)]}\cdots (g^\ell)^{[-3]}(-2) \\
            &\ \  \vdots\\
            &=-n \leq x.
    \end{align*}
   Thus for $h := g_ng \jn 1$ we have for $x\in [-n,n]$,
    \[
    h(x) = \begin{cases}
        1 & \text{if } x = 0 \\
        x & \text{if } x \neq 0,
    \end{cases}
    \]
    yielding $h\sim_n c$;  see Figure~\ref{fig:placeholder}. Thus $c \precsim \langle f \rangle$.
   
    The proof of (ii) is similar.
\end{proof}

\begin{figure}
    \centering
    \begin{tikzpicture}[scale=0.58]
\node[fill,draw,circle,scale=0.275](9) at (0,9){};
\node[left](9.1,0) at (9){\small$5$};
\node[fill,draw,circle,scale=0.275](9r) at (2,9){};

\node[fill,draw,circle,scale=0.275](8) at (0,8){};
\node[left](8.1,0) at (8){\small$4$};
\node[fill,draw,circle,scale=0.275](8r) at (2,8){};

\node[fill,draw,circle,scale=0.275](7) at (0,7){};
\node[left](7.1,0) at (7){\small$3$};
\node[fill,draw,circle,scale=0.275](7r) at (2,7){};

\node[fill,draw,circle,scale=0.275](6) at (0,6){};
\node[left](6.1,0) at (6){\small$2$};
\node[fill,draw,circle,scale=0.275](6r) at (2,6){};

\node[fill,draw,circle,scale=0.275](5) at (0,5){};
\node[left](9.1,0) at (5){\small$1$};
\node[fill,draw,circle,scale=0.275](5r) at (2,5){};

\node[fill,draw,circle,scale=0.275](4) at (0,4){};
\node[left](8.1,0) at (4){\small$0$};
\node[fill,draw,circle,scale=0.275](4r) at (2,4){};

\node[fill,draw,circle,scale=0.275](3) at (0,3){};
\node[left](3.1,0) at (3){\small$-1$};
\node[fill,draw,circle,scale=0.275](3r) at (2,3){};

\node[fill,draw,circle,scale=0.275](2) at (0,2){};
\node[left](2.1,0) at (2){\small$-2$};
\node[fill,draw,circle,scale=0.275](2r) at (2,2){};
\node at (9)[above=3pt]{$\vdots$};
\node at (9r)[above=3pt]{$\vdots$};
\node at (2)[below=-1pt]{$\vdots$};
\node at (2r)[below=-1pt]{$\vdots$};
\draw[-](9)--(9r);
\draw[-](8)--(9r);
\draw[-](7)--(7r);
\draw[-](6)--(7r);
\draw[-](5)--(5r);
\draw[-](4)--(5r);
\draw[-](3)--(3r);
\draw[-](2)--(2r);
\node at (1,0.4){$g$};
\end{tikzpicture}%
\quad
\begin{tikzpicture}[scale=0.58]
\node[fill,draw,circle,scale=0.275](9) at (0,9){};
\node[left](9.1,0) at (9){\small$5$};
\node[fill,draw,circle,scale=0.275](9r) at (2,9){};

\node[fill,draw,circle,scale=0.275](8) at (0,8){};
\node[left](8.1,0) at (8){\small$4$};
\node[fill,draw,circle,scale=0.275](8r) at (2,8){};

\node[fill,draw,circle,scale=0.275](7) at (0,7){};
\node[left](7.1,0) at (7){\small$3$};
\node[fill,draw,circle,scale=0.275](7r) at (2,7){};

\node[fill,draw,circle,scale=0.275](6) at (0,6){};
\node[left](6.1,0) at (6){\small$2$};
\node[fill,draw,circle,scale=0.275](6r) at (2,6){};

\node[fill,draw,circle,scale=0.275](5) at (0,5){};
\node[left](9.1,0) at (5){\small$1$};
\node[fill,draw,circle,scale=0.275](5r) at (2,5){};

\node[fill,draw,circle,scale=0.275](4) at (0,4){};
\node[left](8.1,0) at (4){\small$0$};
\node[fill,draw,circle,scale=0.275](4r) at (2,4){};

\node[fill,draw,circle,scale=0.275](3) at (0,3){};
\node[left](3.1,0) at (3){\small$-1$};
\node[fill,draw,circle,scale=0.275](3r) at (2,3){};

\node[fill,draw,circle,scale=0.275](2) at (0,2){};
\node[left](2.1,0) at (2){\small$-2$};
\node[fill,draw,circle,scale=0.275](2r) at (2,2){};
\node at (9)[above=3pt]{$\vdots$};
\node at (9r)[above=3pt]{$\vdots$};
\node at (2)[below=-1pt]{$\vdots$};
\node at (2r)[below=-1pt]{$\vdots$};
\draw[-](9)--(8r);
\draw[-](8)--(8r);
\draw[-](7)--(6r);
\draw[-](6)--(6r);
\draw[-](5)--(5r);
\draw[-](4)--(4r);
\draw[-](3)--(3r);
\draw[-](2)--(2r);
\draw[-](9)--(8r);
\draw[-](8)--(8r);
\draw[-](7)--(6r);
\draw[-](6)--(6r);
\node at (1,0.55){$(g^\ell)^{[2]}$};
\end{tikzpicture}%
\quad
\begin{tikzpicture}[scale=0.58]
\node[fill,draw,circle,scale=0.275](9) at (0,9){};
\node[left](9.1,0) at (9){\small$5$};
\node[fill,draw,circle,scale=0.275](9r) at (2,9){};

\node[fill,draw,circle,scale=0.275](8) at (0,8){};
\node[left](8.1,0) at (8){\small$4$};
\node[fill,draw,circle,scale=0.275](8r) at (2,8){};

\node[fill,draw,circle,scale=0.275](7) at (0,7){};
\node[left](7.1,0) at (7){\small$3$};
\node[fill,draw,circle,scale=0.275](7r) at (2,7){};

\node[fill,draw,circle,scale=0.275](6) at (0,6){};
\node[left](6.1,0) at (6){\small$2$};
\node[fill,draw,circle,scale=0.275](6r) at (2,6){};

\node[fill,draw,circle,scale=0.275](5) at (0,5){};
\node[left](9.1,0) at (5){\small$1$};
\node[fill,draw,circle,scale=0.275](5r) at (2,5){};

\node[fill,draw,circle,scale=0.275](4) at (0,4){};
\node[left](8.1,0) at (4){\small$0$};
\node[fill,draw,circle,scale=0.275](4r) at (2,4){};

\node[fill,draw,circle,scale=0.275](3) at (0,3){};
\node[left](3.1,0) at (3){\small$-1$};
\node[fill,draw,circle,scale=0.275](3r) at (2,3){};

\node[fill,draw,circle,scale=0.275](2) at (0,2){};
\node[left](2.1,0) at (2){\small$-2$};
\node[fill,draw,circle,scale=0.275](2r) at (2,2){};

\node at (9)[above=3pt]{$\vdots$};
\node at (9r)[above=3pt]{$\vdots$};
\node at (2)[below=-1pt]{$\vdots$};
\node at (2r)[below=-1pt]{$\vdots$};
\draw[-](9)--(8r);
\draw[-](8)--(8r);
\draw[-](7)-- (6r);
\draw[-](6)--(6r);
\draw[-](5)--(5r);
\draw[-](4)--(5r);
\draw[-](3)--(3r);
\draw[-](2)--(2r);

\node at (1,0.55){$(g^{\ell})^{[2]}\circ g$};
\end{tikzpicture}%
\quad
\begin{tikzpicture}[scale=0.58]
\node[fill,draw,circle,scale=0.275](9) at (0,9){};
\node[left](9.1,0) at (9){\small$5$};
\node[fill,draw,circle,scale=0.275](9r) at (2,9){};

\node[fill,draw,circle,scale=0.275](8) at (0,8){};
\node[left](8.1,0) at (8){\small$4$};
\node[fill,draw,circle,scale=0.275](8r) at (2,8){};

\node[fill,draw,circle,scale=0.275](7) at (0,7){};
\node[left](7.1,0) at (7){\small$3$};
\node[fill,draw,circle,scale=0.275](7r) at (2,7){};

\node[fill,draw,circle,scale=0.275](6) at (0,6){};
\node[left](6.1,0) at (6){\small$2$};
\node[fill,draw,circle,scale=0.275](6r) at (2,6){};

\node[fill,draw,circle,scale=0.275](5) at (0,5){};
\node[left](9.1,0) at (5){\small$1$};
\node[fill,draw,circle,scale=0.275](5r) at (2,5){};

\node[fill,draw,circle,scale=0.275](4) at (0,4){};
\node[left](8.1,0) at (4){\small$0$};
\node[fill,draw,circle,scale=0.275](4r) at (2,4){};

\node[fill,draw,circle,scale=0.275](3) at (0,3){};
\node[left](3.1,0) at (3){\small$-1$};
\node[fill,draw,circle,scale=0.275](3r) at (2,3){};

\node[fill,draw,circle,scale=0.275](2) at (0,2){};
\node[left](2.1,0) at (2){\small$-2$};
\node[fill,draw,circle,scale=0.275](2r) at (2,2){};

\node at (9)[above=3pt]{$\vdots$};
\node at (9r)[above=3pt]{$\vdots$};
\node at (2)[below=-1pt]{$\vdots$};
\node at (2r)[below=-1pt]{$\vdots$};
\draw[-](9)--(9r);
\draw[-](8)--(8r);
\draw[-](7)-- (7r);
\draw[-](6)--(6r);
\draw[-](5)--(5r);
\draw[-](4)--(5r);
\draw[-](3)--(3r);
\draw[-](2)--(2r);

\node at (1,0.4){ $\cn{c}$};
\end{tikzpicture}%
    \caption{Example for the proof of Lemma~\ref{l:simulate_fs_eventually_const}}
    \label{fig:placeholder}
\end{figure}

\begin{lemma}\label{l:non-per-gen}
    For any non-periodic positive idempotent $f\in \mathrm{F}(\ZZ)$, $\vr(\langle f \rangle) = \mathsf{DLP}$. 
\end{lemma}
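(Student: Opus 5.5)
The plan is to reduce $f$ to an idempotent of maximal height $1$ and then simulate $\cc$. Once $\cc \precsim \langle f\rangle$ is established, Lemma~\ref{l:dlp-simulation}(i) gives $\vr(\langle f\rangle)=\mathsf{DLP}$.

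\emph{Step 1: encode $f$ by its blocks.} Since $f$ is a positive idempotent, we have $f f^\ell = f$ and $f^\ell f = f^\ell$. Hence $\ZZ$ splits into the finite intervals $f^{-1}[y]$ (the blocks), and $f$ sends each block to its maximum. So $f$ is determined by the set $S$ of minima and the set $T$ of maxima of its nontrivial blocks. By Lemma~\ref{l:pos-idem-slope1}, $g:=\slt(f)$ satisfies $g(x)=x+1$ exactly for $x\in S$ and is the identity elsewhere. I would pair this with a dual term obtained by swapping the roles of ${}^\ell$ and ${}^r$ (for instance, applied to $f^{r}$ or $f^{\ell\ell}$); it should produce the analogous maximal-height-$1$ idempotent $g'$ attached to $T$ (fixing $x$ unless $x+1\in T$, say). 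If $S$ and $T$ were both periodic, with common period $n$, then $f$ would be $n$-periodic. So at least one of $g,g'$ is non-periodic, and by symmetry we may assume it is $g$. For a set $X\subseteq\ZZ$ with no two consecutive elements, write $g_X$ for the corresponding height-$1$ idempotent. Then $g=g_S$ and $g^{[k]}=g_{S+k}$.

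\emph{Step 2: the easy case.} If $S$ is bounded above or bounded below, then $g$ is the identity on a ray. Lemma~\ref{l:simulate_fs_eventually_const} applied to $g\in\langle f\rangle$ gives $\cc\precsim\langle f\rangle$ directly.

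\emph{Step 3: the general case, and the main obstacle.} Suppose $S$ is unbounded in both directions and not periodic, so $S\neq S+m$ for every $m\ge 1$. I would study the family $\mathcal{S}$ of sets $X$ for which $g_X$ is simulated by $\langle f\rangle$. By Lemma~\ref{l:simulation_diagram}, $\mathcal{S}$ is closed under everything a term can do. In particular $g_X\mt g_Y=g_{X\cap Y}$ gives closure under intersections, and shifts give closure under translation. Moreover $\mathcal{S}$ is topologically closed: any $X$ agreeing on every window $[-k,k]$ with some member of $\mathcal{S}$ is itself in $\mathcal{S}$. The goal is to produce some $X\in\mathcal{S}$ that is nonempty and bounded on one side. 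The construction in the proof of Lemma~\ref{l:simulate_fs_eventually_const} then works verbatim on windows, because it only uses values on $[-n,n]$; this yields $\cc\precsim\langle f\rangle$.

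To get such an $X$, I would first try to realise a ``difference'' set of the form $S\setminus(S+m)$, or more generally $X\cap\{x : x+m\notin Y\}$. The intended mechanism is a composite such as $(g_Y^{[m]})^\ell g_X\jn 1$ or $g_X^{\ell}$ composed with shifts, in the spirit of the computation with $g_n$ in Lemma~\ref{l:simulate_fs_eventually_const}. Here $g_X^\ell$ moves the points of $X+1$ down, and composing with $g_Y$ cancels the moves wherever the patterns align, so only the mismatches survive. Non-periodicity ensures that, for each $m$, the set $S\,\triangle\,(S+m)$ is nonempty.

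Taking a limit of suitable translates, recentred at a mismatch and chosen with compactness (K\"onig's lemma on windows), should then give an element of $\mathcal{S}$ that is nonempty near $0$ and empty on one side. To force emptiness on one side, I would pick the recentring point as a ``first'' mismatch along a sequence of windows. I expect this step to be the main obstacle: realising set difference by a term, and making the compactness argument actually yield a one-sided bounded limit rather than another two-sided nonperiodic pattern. If that fails, the fallback is that a limit pattern which is periodic, of unbounded periods, still gives simulated non-identity elements of $\m F_n(\ZZ)$ for infinitely many $n$. Lemma~\ref{l:dlp-simulation}(ii) together with Proposition~\ref{p:Fn-LPn-gen}(ii) would then again give $\mathsf{DLP}$.
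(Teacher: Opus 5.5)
Steps~1 and~2 are fine. The dual term can be made explicit: $(f^{[-1]}f^\ell\mt 1)^\ell$ is the height-$1$ idempotent moving exactly the points of $T$. Step~3, however, is the whole content of the lemma, and it is not proved. You state the goal, a nonempty $X\in\mathcal S$ bounded on one side, and then only describe mechanisms you hope will produce it.

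Realising differences is not actually the obstacle, since $g_Y^\ell g_X\jn 1=g_{X\setminus Y}$. But sets such as $S\setminus(S+m)$ can still be unbounded in both directions. The mismatch set $S\,\triangle\,(S+m)$ need not have a ``first'' element. Nothing in your compactness argument stops the limit of recentred translates from being yet another two-sided non-periodic pattern. The fallback (periodic limits of unbounded period) has no supporting argument either. So, as written, the proof does not close.

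The missing idea is to intersect rather than subtract, recentring at all points of $S$ at once. Put $N:=\bigcap_{a\in S}(S-a)=\{x\in\ZZ : S+x\subseteq S\}$.
\begin{itemize}
\item $N\in\mathcal S$, by the closure properties you already listed: translation, finite intersections, and limits on windows, since $N$ is the decreasing limit of finite sub-intersections.
\item $N$ has no two consecutive elements, $0\in N$, and $N+N\subseteq N$.
\item If $N$ contained some $p>0$ and some $q<0$, then both $p\lvert q\rvert$ and $-p\lvert q\rvert$ would lie in $N$. This forces $S+p\lvert q\rvert=S$, contradicting non-periodicity.
\end{itemize}
Hence $N$ is nonempty and lies in a half-line. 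So $g_N$ is strictly positive and the identity on a ray. Lemma~\ref{l:simulate_fs_eventually_const} together with Lemma~\ref{l:simulation_diagram} (simulation is transitive) then gives $\cc\precsim\langle f\rangle$.

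This is exactly the paper's argument, except that the paper applies it to $f$ itself. There $g=\bigwedge\{f^{[k]} : f^{[k]}(0)>0\}$ is again a closure operator, whose fixed-point set is the union of those of the $f^{[k]}$. It is simulated by $\langle f\rangle$ and it moves $0$. If it moved points on both sides of $0$, then $f$ would be periodic. So the detour through $\slt(f)$ and the dual term is unnecessary.
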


\begin{proof}
Let $\m{L} = \langle f \rangle$ and define $F = \{f^{[k]} : f^{[k]}(0) > 0 \text{ and } k\in \ZZ \}$ and $g = \bigwedge F$. 
Then, since $f$ is strictly positive and idempotent, $g$ is positive and idempotent and, since there exists a $k\in \ZZ$ with $f(k) = k+1$, we get $g(0) = 1$ and $g(1) = 1$; so $g$ is strictly positive. 

If for all $l < 0$, $g(l) = l$ or for all $l > 0$, $g(l) = l$, then, since $\m{L}$ simulates $g$ we get that $\m{L}$  simulates $\cc$, by Lemma~\ref{l:simulate_fs_eventually_const}, and $\vr(\m{L}) = \va{DLP}$, by Lemma~\ref{l:dlp-simulation}(i). 

Otherwise, there exist an $l<0$ with $g(l) > l$ and $l'>0$ with $g(l')>l'$. 
Hence we get $l\leq g(l)-1 < g(l)$ which implies $g(l) \leq g(g(l)-1) \leq g(g(l)) = g(l)$, i.e., $g(g(l)-1) = g(l)$. Similarly, it follows that  $g(g(l')-1) = g(l')$. 
Therefore, there exist a maximal $d \in \ZZ$ with $d< 0$ and $g(d) =d+1$ and a minimal $u \in \ZZ$ with $0<u$ and $g(u) = u+1$.
We will show that this case leads to a contradiction.

For $m = \max\{u, -d  \}$ there exists a finite subset $F_1 \subseteq F$ such that for $g_1 :=  \bigwedge F_1$ we have the equality of the restrictions $(g_1)_{[-m,m]} = g_{[-m,m]}$.  

We will now show that $u=-d$. If $u<-d$, let $F_2 := \{h^{[-d]} : h \in F_1 \}$ and $g_2 := \bigwedge F_2 = g_1^{[-d]}$. Then $g_2(u) = g_1(u+d) - d = u$, since $d < u+d < 0$. 
Hence $(\bigwedge (F_1 \cup F_2))(u) = (g_1 \mt g_2)(u) = \min \{u+1, u\} = u$, yielding $g(u) = u$, a contradiction. 
Similarly the assumption $u>-d$ yields a contradiction. So we have $u=-d$.

Finally, we derive a contradiction by showing that $f$ is $u$-periodic; let $k\in \ZZ$. If $f(k) =k$ and $f(k+u) >k+u$, then $f^{[-k-u]}(0) = f(k+u)-k-u>0$, so $f^{[-k-u]} \in F$; but also 
\[
f^{[-k-u]}(d) = f^{[-k-u]}(-u) = f(k) - k-u = -u = d,
\]
so  $g(d) = d$, a contradiction. 
If $f(k) > k$ and $f(k+u) = k+u$, then $f^{[-k]}(0) = f(k)-k > 0$, so $f^{[-k]} \in F$; but also $f^{[-k]}(u) = f(k+u) -k = u$, so $g(u) = u$, a contradiction. 
So we have shown that for all $k\in \ZZ$,
\[
f(k) = k \iff f(k+u) = k+u,
\]
i.e., the set of fixed points of $f$ is closed under translation by $u$ (the set is $u$-periodic).
But, since $f$ is positive and idempotent, i.e., a closure operator on $\ZZ$, the fixed points of $f$ completely determine $f$ (for each $k\in \ZZ$, $f(k) = \min \{ m\in \ZZ : k \leq m = f(m)\}$); hence  $f$ is $u$-periodic.
\end{proof}

\begin{theorem}\label{t: main}
    If $\m L$ is a non-periodic distributive $\ell$-pregroup,
    then $\vr(\m{L}) = \va{DLP}$. 
\end{theorem}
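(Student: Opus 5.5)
We apply the dichotomy of Lemma~\ref{l:pos-idem-non-periodic} to the non-periodic distributive $\ell$-pregroup $\m L$. It yields two cases: either $\m L$ contains periodic positive idempotents of arbitrarily big periodicity, or $\m L$ contains a non-periodic positive idempotent $e$. The first case is immediate. Such an $\m L$ contains elements of arbitrarily large periodicity, so Corollary~\ref{cor:per-gen-DLP} gives $\vr(\m L)=\mathsf{DLP}$.

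For the second case we fix a representation $\m L\leq \m F(\m J\overrightarrow{\times}\ZZ)$ and let $e\in L$ be a non-periodic positive idempotent. The goal is to produce a local element $g\in\langle e\rangle\leq \m L$ and some $j\in J$ such that either:
\begin{enumerate}[label = \textup{(\alph*)}]
\item $\overline{g}_j$ is a non-periodic positive idempotent of $\m F(\ZZ)$; then $\langle \overline{g}_j\rangle$ is a homomorphic image (via the $j$th projection) of the local subalgebra $\langle g\rangle$, so $\vr(\m L)\supseteq\vr(\langle\overline{g}_j\rangle)=\mathsf{DLP}$ by Lemma~\ref{l:non-per-gen}; or
\item $\cc\precsim^j\m L$, or $\m L$ simulates elements of infinitely many distinct periodicities, and we conclude by Lemma~\ref{l:dlp-simulation} together with Proposition~\ref{p:Fn-LPn-gen}(ii).
\end{enumerate}

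To obtain a local element, we first pass from $e$ to $\slt(e)$. If $e$ is the identity it is periodic, so $e$ is strictly positive. By Lemma~\ref{l:pos-idem-slope1}, $\slt(e)$ is then a local strictly positive idempotent of maximal height $1$. Its components $\overline{\slt(e)}_j$ are positive idempotents of $\m F(\ZZ)$ of height at most $1$. Such a component is determined by its set of ``jump points''. Since the map $f\mapsto\slt(f)$ is given by a term, $\slt(e)$ commutes with the automorphisms $x\mapsto x^{[k]}$, so $\slt(e)^{[k]}=\slt(e^{[k]})$.

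Now suppose some component $\overline{\slt(e)}_j$ is non-periodic. Then we are in case (a), since Lemma~\ref{l:non-per-gen} applies to any non-periodic positive idempotent of $\m F(\ZZ)$.

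Otherwise every component is periodic. If these periodicities are unbounded, Proposition~\ref{p:Fn-LPn-gen}(i) applied componentwise gives $\m F_n(\ZZ)\in\vr(\m L)$ for infinitely many $n$. We must still check that infinitely many distinct $n$ occur; since infinitely many distinct periodicities appear, infinitely many prime powers divide them, and we argue as in Corollary~\ref{cor:per-gen-DLP}. The remaining subcase, where all components are periodic with bounded periodicity, is where I expect the main obstacle. In that subcase $\slt(e)$ is itself periodic, so the non-periodicity of $e$ must be invisible to $\slt$. This can happen because $e$ is non-local: by Lemma~\ref{l: periodic-is-composite}, a non-composite idempotent such as the one in Remark~\ref{r: idemptNOTcomposite} is non-periodic purely for global reasons.

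For this subcase I would first try to exploit non-locality directly. Take a point $x$ such that $x$ and $x+1$ lie in the same component $\{i\}\times\ZZ$ but $e(x+1)$ lies in a higher component. Using the description in Lemma~\ref{l:pos-idem-slope1}, one sees that $\overline{\slt(e)}_i$ fixes everything above $x+1$, or at least fixes every point in a tail of one side. The idea is to localize first: replace $e$ by a meet of conjugates or shifts of $\slt(e)$ so that a single component $\overline{g}_i$ is strictly positive yet eventually the identity on one side. Then Lemma~\ref{l:simulate_fs_eventually_const} gives $\cc\precsim\langle\overline{g}_i\rangle$, and since $g$ is local, Lemma~\ref{l:simulation_diagram} lifts this to $\cc\precsim^i\m L$, placing us in case (b). Making this localization precise, in particular ensuring that the relevant component is neither trivial nor periodic, is the step I anticipate requiring the most care.
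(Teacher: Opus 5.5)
\textbf{There is a genuine gap: the final subcase.} It arises because you replace $e$ by $\slt(e)$ before splitting on whether $e$ is local. By Lemma~\ref{l:pos-idem-slope1}, $\slt(e)$ moves exactly the points $x$ with $x<e(x)$ and $e^\ell e(x)=x$, i.e.\ the least elements of the nontrivial blocks of $e$. The block lengths are forgotten, so $\slt(e)$ can be periodic even when $e$ is \emph{local} and non-periodic.

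For instance, take $\m J$ a singleton and let $e\in\m F(\ZZ)$ be the closure operator whose non-fixed points are exactly $10k,\dots,10k+\ell_k-1$ ($k\in\ZZ$), where $(\ell_k)$ is a non-periodic sequence in $\{1,2\}$. Then $e$ is a non-periodic positive idempotent, but $\slt(e)$ moves exactly the points $10k$ and is $10$-periodic. This $e$ lands in your ``remaining subcase'', where your plan is to exploit non-locality. Since $e$ is local, that plan has nothing to act on, and the proposal as written does not cover this case. In fact the remaining subcase occurs \emph{only} for local $e$, so the plan is aimed at the wrong situation.

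\textbf{Why non-local $e$ never reaches that subcase.} Suppose $e$ is non-local. Pick $y=e(x)$ in a component above that of $x$, and set $x_0:=e^\ell(y)=\min e^{-1}[y]$, which lies in some component $\{i\}\times\ZZ$ below that of $y$. Each $x'>x_0$ in $\{i\}\times\ZZ$ satisfies $x_0<x'<y$. Hence $y=e(x_0)\leq e(x')\leq e(y)=y$, and $e^\ell e(x')=x_0\neq x'$. So $\slt(e)$ maps $x_0$ to $x_0+1$ and fixes every point of $\{i\}\times\ZZ$ above $x_0$. The component $\overline{\slt(e)}_i$ is therefore strictly positive and eventually the identity, hence non-periodic. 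Lemma~\ref{l:simulate_fs_eventually_const} applies to it directly and gives $\cc\precsim^i\m L$. No meets of conjugates or shifts are needed, and choosing $x_0=e^\ell(y)$ is exactly what guarantees nontriviality.

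\textbf{The missing step.} As the paper does, split on locality of $e$ first.
\begin{itemize}
\item If $e$ is non-local, argue as just described.
\item If $e$ is local, work with the components $\overline e_j$ of $e$ itself, not of $\slt(e)$. Either some $\overline e_j$ is non-periodic, and Lemma~\ref{l:non-per-gen} applies to it. Or all $\overline e_j$ are periodic, and their periodicities must be unbounded, since otherwise $e$ would be periodic with period their lcm. In that case Proposition~\ref{p:Fn-LPn-gen} finishes the argument.
\end{itemize}
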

\begin{proof}
 Since $\m L$ is distributive,  
by \cite{GG1} we obtain a representation of the form $\m{L} \leq \m{F}(\m{J} \overrightarrow{\times} \ZZ)$ up to isomorphism.
    Since $\m{L}$ is not periodic, by Lemma~\ref{l:pos-idem-non-periodic}, $\m{L}$ contains a positive non-periodic idempotent or periodic idempotents of arbitrary big periodicity. 
    In the latter case we get $\vr(\m{L}) = \mathsf{DLP}$ by Corollary~\ref{cor:per-gen-DLP}.
    
    So we may assume that $\m L$ contains a non-periodic positive idempotent $f$, hence $f$ is actually strictly positive. There are three cases
    \begin{enumerate}
        \item $f$ is not local, i.e., there exists a $j\in J$ such that $\widetilde{f}(j) \neq \{j\}$.
        \item $f$ is local and $\overline{f}_j$ is periodic for each $j\in J$.
        \item $f$ is local and for some $j\in J$, $\overline{f}_j$ is not periodic. 
    \end{enumerate}

In the first case, since $f$ is positive, there exists $j_+ \in \widetilde{f}(j)$ with $j_+ > j$. 
Now, after renaming the elements of the copies of $\ZZ$, we may assume that $f(j,0) = (j_+,0) = f(j_+,0)$ and that $(j,0) = \min\{(i,k) : f(i,k) = (j_+,0) \} = f^\ell(j_+,0)$. 
So, by Lemma~\ref{l:pos-idem-slope1}, $h := f^{\ell\ell\ell}f \jn 1$ is a strictly positive local idempotent such that $h(j,0) = (j,1)$ and $h(j,k) = (j,k)$ for each $k>0$. 
By Lemma~\ref{l:simulate_fs_eventually_const}, $\m{L}$ simulates $\cc$ at $j$, so $\vr(\m{L}) = \va{DLP}$ by Lemma~\ref{l:dlp-simulation}(i).

In the second case we have $f = (id,\overline{f})$. If $P = \{\per(\overline{f}_j) :  j\in J  \}$ were finite, i.e., $P = \{n_1,\dots,n_k\}$, then by Lemma~5.5 of \cite{GS} $f$ would be $\lcm(n_1,\dots, n_k)$-periodic, a contradiction; thus $P$ is infinite. 
For each $n\in P$, there exists $j_n\in J$ such that $\per(\overline{f}_{j_n})=n$, so $\langle \overline{f}_{j_n}\rangle \cong \m{F}_n(\ZZ)$, by Proposition~\ref{p:Fn-LPn-gen}(i). 
Therefore, since $\m{F}_n(\ZZ)$ is a subalgebra of the $j_n$th projection of $\loc{\m{L}}$, we have 
$\m{F}_n(\ZZ) \in \vr(\m{L})$ for each $n\in P$; as $P$ is infinite,  by Proposition~\ref{p:Fn-LPn-gen}(ii) 
it follows that $\vr(\m{L}) = \va{DLP}$.

In the third case we  have $\langle \overline{f}_j \rangle \in \vr(\m{L})$, since $\langle \overline{f}_j \rangle$ is a subalgebra of the $j$th projection of $\loc{\m{L}} \leq \m L$. Hence $\vr(\m{L}) = \mathsf{DLP}$, by Lemma~\ref{l:non-per-gen}.
\end{proof}

\begin{corollary}\label{c: subvarDLP}
Every proper subvariety of \(\mathsf{DLP}\) is contained in \(\mathsf{LP}_n\) for some $n$.    
\end{corollary}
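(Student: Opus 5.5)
The plan is to derive this directly from Theorem~\ref{t: main} by contraposition, together with a compactness-style step that turns ``every member is periodic'' into ``uniformly periodic''. Let $\mathsf{V}$ be a proper subvariety of $\mathsf{DLP}$. If some $\m L\in\mathsf{V}$ were non-periodic, Theorem~\ref{t: main} would give $\mathsf{DLP}=\vr(\m L)\subseteq\mathsf{V}$, which is a contradiction. Hence every algebra in $\mathsf{V}$ is periodic, meaning that each member satisfies $x^{\ell^n}\approx x^{r^n}$ for some $n$ depending on the member. We still have to produce a single $n$ that works for all of $\mathsf{V}$.

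For this we use a product. Suppose, towards a contradiction, that for every $n\in\ZZ^+$ there is $\m L_n\in\mathsf{V}$ that is not $n$-periodic. The product $\m P=\prod_{n}\m L_n$ lies in $\mathsf{V}$, and it is distributive because $\mathsf{V}\subseteq\mathsf{DLP}$. The operations of $\m P$ are coordinatewise, so if $\m P$ were $N$-periodic then every $\m L_n$ would be $N$-periodic. In particular $\m L_N$ would be $N$-periodic, which it is not. So $\m P$ is non-periodic. By the first step, $\vr(\m P)=\mathsf{DLP}$, contradicting properness.

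Therefore some $n$ exists such that every member of $\mathsf{V}$ is $n$-periodic, and so $\mathsf{V}\subseteq\mathsf{LP}_n$. One could alternatively take $n$ to be the $\lcm$ of the periods that occur, but the product argument already yields the bound. The only point that needs care is the observation that ``periodic'' for an algebra means satisfying the equation for one fixed $n$. This makes the product argument essential, since without it each member would be periodic only with its own $n$. All the substantial work is contained in Theorem~\ref{t: main}.
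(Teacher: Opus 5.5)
Your argument is correct and is the intended derivation from Theorem~\ref{t: main}, which the paper leaves implicit: you apply the theorem to a single algebra of $\mathsf{V}$ that carries all the periodicity failures at once. Your product $\prod_n \m L_n$ does this, as would any generating algebra of $\mathsf{V}$ such as its countably generated free algebra; so that algebra must be $n$-periodic for one fixed $n$. One minor point: your aside about taking the $\lcm$ of the occurring periods is not an independent alternative, because the finiteness of that set of periods is exactly what the product argument establishes.
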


\begin{remark}
    From Corollary~\ref{c: subvarDLP} we obtain an alternative proof of the fact that $\mathsf{DLP}$ has a decidable equational theory, since for each equation $\varepsilon$, in parallel, we can try to prove it from the finitely many axioms of $\mathsf{DLP}$ and for each $n\geq 1$, by dovetailing, try to prove $n$-periodicity using the axioms of $\mathsf{DLP}$ together with $\varepsilon$. 
    This procedure will eventually stop, since either $\varepsilon$ holds in $\mathsf{DLP}$ and we get a proof of it, or it does not hold in $\mathsf{DLP}$ so there is a proof of $n$-periodicity from $\varepsilon$ for some $n\geq 1$.
\end{remark}

\begin{remark} 
    In \cite{GS} the subvariety lattice of periodic varieties of $\ell$-pregroups is studied and by Corollary~\ref{c: subvarDLP} this constitutes exactly the proper subvarieties of $\mathsf{DLP}$. 
    As a result, the subvariety lattice of $\mathsf{DLP}$, excluding $\mathsf{DLP}$ itself, decomposes into a disjoint union of bounded intervals each consisting of all properly $n$-periodic varieties, for different $n \in \ZZ^+$; see Figure~\ref{f: SubvarLat}. 
    The top of such an interval is the variety $\mathsf{LP}_n$ and the bottom is characterized in \cite{GS} as $\bigvee_{q\in Q_n} \vr(\m{F}_{q}(\ZZ))$, where $Q_n = \{p_1^{k_1},\dots, p_{l}^{k_l} \}$ and $n = p_1^{k_1}\cdots p_l^{k_l}$ is the prime decomposition of $n$;
    if $n$ is a prime power, this interval is $[\vr(\m{F}_n(\ZZ)), \mathsf{LP}_n]$.

    The tops of these intervals, the varieties $\mathsf{LP}_n$, are ordered simply by divisibility. The varieties of the form $\vr(\m{F}_n(\ZZ))$ are also ordered by divisibility, and in general, as shown in \cite{GS}, the varieties of the form $\bigvee_{s\in S} \vr(\m{F}_{s}(\ZZ))$, where $S \subseteq \ZZ^+$,
    (without $\mathsf{DLP}$ itself) form an ideal in the subvariety lattice of $\mathsf{DLP}$ isomorphic to the lattice of finite downsets of the divisibility lattice.\footnote{Furthermore, \cite{GS} provides explicit axiomatizations for all of these varieties.} The bottoms of the intervals, are also ordered by divisibility: $\bigvee_{q\in Q_m} \vr(\m{F}_{q}(\ZZ)) \subseteq  \bigvee_{q\in Q_n} \vr(\m{F}_{q}(\ZZ))$ iff every element of $Q_m$ divides some element of $Q_n$ iff $m$ divides $n$.
    
\end{remark}

\begin{figure}[h]
    \centering
\begin{tikzpicture}[ place2/.style={circle,draw=black,fill=black, minimum size = 0pt, inner sep = 1.5ex, opacity = 0},scale=1.5]

  \node[place2] (LP6) at (-0.5,5) {};

  \node[place2] (mid) at (-1,4.1) {};
  
  \node[place2] (F6) at (1,3) {};
  \node[place2] (F62) at (0.85,3.05) {};
  \node[place2] (F2-3) at (1,2) {};
  \node[place2] (F2-32) at (0.7,2) {};

  \node[place2] (LP2) at (-1.5,3.5) {};
  
  \node[place2] (F3) at (1.75,1)  {};
  \node[place2] (F2) at (0.2,1)  {};
  \node[place2] (F22) at (0.15,1.05)  {};

  \node[place2] (LG) at (-2.85,1.95)  {};
  \node[place2] (LG2) at (-2.5,2)  {};
  
  \node[place2] (Ab4) at (-0.25,0.05) {};
  \node[place2] (Ab3) at (-0.25,0) {};
  \node[place2] (Ab2) at (-0.35,0) {};
  \node[place2] (Ab) at (0.39,-0.025)  {};
  \node[place2] (bot) at (-0.25,-0.6) {};
  
  \node[] at (LP6)  {\small$\mathsf{LP}_6$};
  
  \node[] at (F6)  {\small$\vr(\m{F}_6(\ZZ))$};
  \node[] at (F2-3)  {\small$\vr(\m{F}_2(\ZZ),\m{F}_3(\ZZ))$};
  \node[] at (LP2)  {\small$\mathsf{LP}_2$};

  \node[] at (F3)  {\small$\vr(\m{F}_3(\ZZ))$};
  \node[] at (F2)  {\small$\vr(\m{F}_2(\ZZ))$};

  \node[] at (LG)  {\small$\mathsf{LP}_1 =\mathsf{LG}$};

  \node[] at (Ab)  {\small$\mathsf{AbLG} = \vr(\m{F}_1(\ZZ))$};
  \node[] at (bot)  {\small$0$};

 \draw (bot) -- (Ab3);
 \draw (Ab4) -- (F3);
 \draw (Ab3) -- (F2);
 \draw (F2) -- (F2-3);
 \draw (F3) -- (F2-3);
 \draw (F2-3) -- (F6);

 \draw (Ab2) to[out=110,in=-20] (LG2);
 \draw (Ab2) to[out=160,in=275] (LG2);

 \draw (F22) to[out=100,in=-25] (LP2);
 \draw (F22) to[out=155,in=270] (LP2);

 \draw (F62) to[out=100,in=-25] (LP6);
 \draw (F62) to[out=155,in=270] (LP6);

 \draw (F2-32) to[out=100,in=-30] (mid);
 \draw (F2-32) to[out=155,in=290] (mid);

 \draw (LG2) to[out=30,in=-110] (LP2);
 \draw (LG2) to[out=85,in=205] (LP2);

 \draw (LP2) to[out=30,in=-105] (LP6);
 \draw (LP2) to[out=85,in=205] (LP6);
  
\end{tikzpicture}
    \caption{Disjoint intervals in the lattice of subvarieties of $\mathsf{DLP}$.}
    \label{f: SubvarLat}
\end{figure}

\section{An application of the main theorem}\label{s: app}
For $n\in \ZZ^+$ in \cite{GS} we defined the terms
\[
\g_n(x) =  x^{[0]} \mt  x^{[1]} \mt \cdots \mt x^{[n-1]}
\text{ and }
\h_n(x) =  x^{[0]} \jn  x^{[1]} \jn \cdots \jn x^{[n-1]}.
\]
There it is shown that if an $\ell$-pregroup $\m{L}$ is $n$-periodic then, for each $a\in L$, $\h_n(a)$ is the smallest invertible element above $a$ and $\g_n(a)$ is the largest invertible element below $a$. 
Moreover, the image of either of the two term functions is exactly the set of invertible elements of $\m L$ and it supports a subalgebra $\Grp{\m L}$ of $\m L$; note that $\Grp{\m L}$ is an $\ell$-group. 
These terms allow us to transfer certain properties of  $\Grp{\m L}$ to $\m L$ itself. 
For example, $\Grp{\m L}$ and $\m L$ have isomorphic lattices of convex subuniverses.

Note that if $\m{L}$ is a  $k$-periodic $\ell$-pregroup with $k\leq n$, then  $\g_n(a)$ and $\h_n(a)$ are invertible for each $a\in L$. 
However, it was left open in \cite[Remark~2.5]{GS} whether the converse also holds.  
This problem can now be resolved using Corollary~\ref{c: subvarDLP} and some information  about the structure of  subdirectly irreducible periodic $\ell$-pregroups, which we develop in the following lemmas.

Let $\m{L}\leq \m{Aut}(\m{J})\wr \m{F}_n(\ZZ)$.  
For an element $f\in \loc{L}$, we  set $\supp(f) := \{j\in J : \overline{f}_j \neq 1\}$ and we  define the \emph{local support} of $\m{L}$ as $\loc{\supp}(\m{L}) := \bigcup\{ \supp(f) : f\in \loc{L}\}$. 
Moreover, the \emph{global group} of $\m L$ is the subgroup $\glob{\m{L}}\leq \m{Aut}(\m{J})$ with universe 
\[
\glob{L} = \{\widetilde{f} : (\widetilde{f},\overline{f}) \in L\} = \{\widetilde{f} : (\widetilde{f},\overline{f}) \in \Grp{L}\}.
\] 

A subset $I$ of $J$ is called \emph{ $\glob{\m{L}}$-stable}, if $g(i) \in I$ for all $g \in \glob{L}$ and $i \in I$.
For a $\glob{\m{L}}$-stable subset $I\subseteq J$, we define the map 
\[
\pi_I\colon L\to  \mathrm{Aut}(\m{I})\times\mathrm{F}_n(\ZZ)^{I}, \quad
    \pi_I(\widetilde{f},(\overline{f}_j)_{j\in J}) = (\widetilde{f}|_{I},(\overline{f}_j)_{j\in I}).
\]
Note that the stability of $I$ guarantees that $\pi_I$ is well-defined.

\begin{lemma}\label{lemma:pi_I-hom}
    Let $\m{L}\leq \m{Aut}(\m{J})\wr \m{F}_n(\ZZ)$ and let $I\subseteq J$ be $\glob{\m{L}}$-stable subset. 
    Then $\pi_I$ is an $\ell$-pregroup homomorphism from $\m L$ to $\m{Aut}(\m{I})\wr \m{F}_n(\ZZ)$.
\end{lemma}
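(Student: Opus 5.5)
The plan is a direct verification from the wreath product formulas recalled in Section~\ref{s: prelims}. First I check that $\pi_I$ is well defined and lands in $\mathrm{Aut}(\m{I})\times \mathrm{F}_n(\ZZ)^I$. For $(\widetilde{f},\overline{f})\in L$, the global part $\widetilde{f}^{-1}$ also lies in $\glob{L}$: it is the global component of $f^\ell\in L$. So $I$ is stable under both $\widetilde{f}$ and $\widetilde{f}^{-1}$. Hence $\widetilde{f}|_I$ is an order-preserving bijection of $I$, i.e.\ an element of $\mathrm{Aut}(\m{I})$, with inverse $\widetilde{f}^{-1}|_I$.

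Next I verify the operations one at a time.
\begin{itemize}
\item \emph{Unit.} $\pi_I(id,\overline{1})=(id_I,\overline{1})$.
\item \emph{Product.} For $f,g\in L$, the global component of $fg$ restricts to $\widetilde{f}|_I\circ\widetilde{g}|_I$, since $\widetilde{g}[I]\subseteq I$. The local component at $i\in I$ is $\overline{f}_{\widetilde{g}(i)}\cdot\overline{g}_i$. Because $\widetilde{g}(i)\in I$, this is exactly the $i$th coordinate of $((\overline{f}|_I)\otimes \widetilde{g}|_I)\cdot \overline{g}|_I$. This agrees with $\pi_I(f)\cdot\pi_I(g)$ computed in $\m{Aut}(\m{I})\wr\m{F}_n(\ZZ)$.
\item \emph{Residuals.} Similarly, $(f^\ell)$ has global component $\widetilde{f}^{-1}$ and local coordinates $\overline{f}^\ell_{\widetilde{f}^{-1}(i)}$, which stay inside $I$ by the stability of $I$ under $\widetilde{f}^{-1}$. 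The same argument applies to $f^r$.
\item \emph{Lattice operations.} I use the case formulas for $\overline{(f\mt g)}_i$ and $\overline{(f\jn g)}_i$. At a coordinate $i\in I$, these depend only on the comparison of $\widetilde{f}(i)$ and $\widetilde{g}(i)$ and on $\overline{f}_i,\overline{g}_i$. These data are unchanged by restriction to $I$, and the global components satisfy $(\widetilde{f}\mt\widetilde{g})|_I=\widetilde{f}|_I\mt\widetilde{g}|_I$ pointwise. So meets and joins are preserved.
\end{itemize}

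The only point needing care is the first: ensuring that $\widetilde{f}^{-1}$ preserves $I$, so that restriction yields an automorphism of $I$ and the residual formulas make sense. This is handled by observing that $\glob{L}$ is a group, so stability under all of $\glob{L}$ includes inverses. Everything else is routine coordinate bookkeeping.
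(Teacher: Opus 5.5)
Your proposal is correct and follows the same direct coordinate-wise verification of unit, lattice operations, residuals and product as the paper's proof. The one addition is your explicit check that $\widetilde{f}^{-1}\in\glob{L}$ (as the global component of $f^\ell$), so that $\widetilde{f}|_I$ is a bijection of $I$; this spells out the well-definedness step the paper only asserts.
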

\begin{proof}
    It is clear that $\pi_I$ preserves the neutral element and the lattices connectives, since they are defined coordinate-wise.
    Also, it follows from the stability of $I$ that the map $\widetilde{f} \mapsto \widetilde{f}|_{I}$ is a group homomorphism.
    Therefore for each $f\in L$, we have 
    \begin{align*}
    \pi_I(f)^\ell
    &= (\widetilde{f}|_{I},(\overline{f}_j)_{j\in I})^\ell \\
    &= ((\widetilde{f}|_{I})^{-1}, (\overline{f}_{(\widetilde{f}|_{I})^{-1}(j)}^\ell)_{j\in I}) \\
    &= (\widetilde{f}^{-1}|_{I}, (\overline{f}_{\widetilde{f}^{-1}|_{I}(j)}^\ell)_{j\in I}) \\
    &= \pi_I(f^\ell).
    \end{align*}
    and similarly for $f^r$. Moreover, all $f,g\in L$, we have
    \begin{align*}
        \pi_I(f)\pi_I(g) 
        &= (\widetilde{f}|_{I},(\overline{f}_j)_{j\in I})(\widetilde{g}|_{I},(\overline{g}_j)_{j\in I}) \\
        &=(\widetilde{f}|_{I}\widetilde{g}|_{I},(\overline{f}_{\widetilde{g}|_{I}(j)}\overline{g}_j)_{j\in I}) \\
        &= ((\widetilde{f}\widetilde{g})|_{I}(\overline{f}_{\widetilde{g}|_{I}(j)}\overline{g}_j)_{j\in I}) \\
        &=\pi_I(fg).
    \end{align*}
    Hence  $\pi_I$ is an $\ell$-pregroup homomorphism.
\end{proof}

Similar to the case of $\ell$-groups we show that  subdirectly irreducible periodic $\ell$-pregroups have a transitive representation. A representation $\m{L}\leq \m{Aut}(\m{J})\wr \m{F}_n(\ZZ)$ is called \emph{transitive} if $\glob{\m{L}}$ acts transitively on $J$. 

\begin{lemma}\label{l: transitive}
    Let $\m{L}$ be a  subdirectly irreducible $n$-periodic $\ell$-pregroup. 
    Then $\m{L}$ has a transitive representation $\m{L}\leq \m{Aut}(\m{J})\wr \m{F}_n(\ZZ)$.
\end{lemma}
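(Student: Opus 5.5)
Start from any representation $\m{L}\leq \m{Aut}(\m{J})\wr \m{F}_n(\ZZ)$, which exists by the representation theorem for $n$-periodic $\ell$-pregroups of \cite{GG2}. The plan imitates the classical $\ell$-group argument: split $J$ into orbits of the global group and use subdirect irreducibility to see that one orbit already gives a faithful representation. For $j\in J$ let $O_j=\{g(j): g\in \glob{L}\}$ be its $\glob{\m{L}}$-orbit. Each $O_j$ is $\glob{\m{L}}$-stable, and since $\glob{L}$ is a group, every element of $\glob{L}$ restricts to an order-automorphism of the subchain $O_j$. By Lemma~\ref{lemma:pi_I-hom}, each $\pi_{O_j}$ is then an $\ell$-pregroup homomorphism $\m{L}\to\m{Aut}(\m{O_j})\wr\m{F}_n(\ZZ)$.

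Next, show that the family $(\pi_{O_j})_{j\in J}$ separates points of $L$. Suppose $f\neq g$ in $L$. Then either $\widetilde{f}(i)\neq\widetilde{g}(i)$ for some $i$, or $\overline{f}_i\neq\overline{g}_i$ for some $i$. In both cases $\pi_{O_i}(f)\neq\pi_{O_i}(g)$, because $i\in O_i$ and $\pi_{O_i}$ retains both the value of the global component at $i$ and the local coordinate $i$. Hence $\m{L}$ embeds into $\prod_{j}\pi_{O_j}[\m{L}]$ as a subdirect product, and the kernels $\ker\pi_{O_j}$ intersect to the identity congruence.

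Now apply subdirect irreducibility: one of these kernels must already be trivial. One can either invoke the definition directly, or observe that a subdirectly irreducible algebra has a monolith, which would otherwise be contained in every kernel and hence in their intersection. So some $\pi_{O_j}$ is injective, giving an embedding $\m{L}\cong\pi_{O_j}[\m{L}]\leq\m{Aut}(\m{O_j})\wr\m{F}_n(\ZZ)$. Its global group is $\{\widetilde{f}|_{O_j}: f\in L\}$, which acts transitively on $O_j$ because $O_j$ is a single orbit. This is the required transitive representation.

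The only delicate step is the well-definedness bookkeeping: that $\widetilde{f}|_{O_j}$ really lies in $\mathrm{Aut}(\m{O_j})$, which needs the inverse to also preserve $O_j$ and follows from the group property; and that the image is again of the required form. These are both routine given Lemma~\ref{lemma:pi_I-hom}. The subdirect-irreducibility step itself is formal once point separation is checked.
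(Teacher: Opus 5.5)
Your proposal is correct and follows essentially the same route as the paper: decompose $J$ into $\glob{\m{L}}$-orbits, use Lemma~\ref{lemma:pi_I-hom} to get homomorphisms $\pi_{I_j}$ whose kernels intersect to $\Delta_L$, and use subdirect irreducibility to find an injective one, whose image is a transitive representation. Your explicit point-separation check and your remark that orbits are preserved by inverses only spell out details the paper leaves implicit.
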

\begin{proof}
    Consider any representation $\m{L}\leq \m{Aut}(\m{J})\wr \m{F}_n(\ZZ)$. 
    For each $j\in J$, let $I_j$ be the orbit of $j$ under the action of $\glob{\m{L}}$, so $\glob{\m{L}}$ acts transitively on $I_j$. 
    By Lemma~\ref{lemma:pi_I-hom}, for each $j\in J$, $\pi_{j} := \pi_{I_j}$ is a homomorphism from $\m L$ to $\m{Aut}(\m{I}_j)\wr \m{F}_n(\ZZ)$ and, since  $J = \bigcup_{j\in J} I_j$, it follows that $\bigcap_{j\in J} \ker(\pi_{j}) = \Delta_L$. 
    Therefore, since $\m{L}$ is  subdirectly irreducible, there exists a $j \in J$ with $\ker(\pi_j) = \Delta_L$, i.e., $\pi_j$ is injective, so $\m L$ embeds  into $\m{Aut}(\m{I}_j)\wr \m{F}_n(\ZZ)$ via $\pi_j$.
    This yields the desired transitive representation $\m{L}\leq\m{Aut}(\m{I}_j)\wr \m{F}_n(\ZZ)$.
\end{proof}

\begin{remark}
    If $\m{L}$ is an $n$-periodic proper $\ell$-pregroup, i.e., not an $\ell$-group, and $\m{L}\leq \m{Aut}(\m{J})\wr \m{F}_n(\ZZ)$ is a transitive representation, then there exists $i \in \loc{\supp}(\m{L})$, yielding a local element $f\in L$ with $\overline{f}_i \neq 1$. Now if $j\in J$, then, by transitivity, there exists a $g\in \grp{L}$ with $\widetilde{g}(j)= i$. Hence for the local element $h=g^{-1}fg$, we get  $\overline{h}_j = \overline{(g^{-1}fg)}_j = \overline{g}_j^{-1}\overline{f}_{\widetilde{g}(j)}\overline{g}_j \neq 1$ and thus $j\in \loc{\supp}(\m{L})$. We conclude that  $J = \loc{\supp}(\m{L})$. 
\end{remark} 

The next lemma shows that for transitive representations the local subalgebra is a subdirect power with respect to the local component of the wreath product.

\begin{lemma}\label{lemma:FSI-subdirect-local}
    Let $\m{L}$ be a  properly $n$-periodic $\ell$-pregroup with transitive representation  $\m{L}\leq \m{Aut}(\m{J})\wr \m{F}_n(\ZZ)$. 
    Then the  restrictriction $\loc{\m{L}}\leq \m{F}_n(\ZZ)^J$ of the representation is subdirect, i.e., for each $i\in J$, the projection $\pi_i\colon \loc{\m{L}} \to \m{F}_n(\ZZ)$ to the $i$th coordinate is surjective.
\end{lemma}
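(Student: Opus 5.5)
The plan is to reduce the statement to finding, for each $i\in J$, a single element of the image $\pi_i[\loc{L}]$ of periodicity exactly $n$. The projection $\pi_i\colon \loc{\m{L}}\to\m{F}_n(\ZZ)$ is a homomorphism, so its image is a subalgebra of $\m{F}_n(\ZZ)$. By Proposition~\ref{p:Fn-LPn-gen}(i), any non-identity element of $\mathrm{F}(\ZZ)$ of periodicity $n$ generates $\m{F}_n(\ZZ)$, so one such element in the image gives surjectivity. Since the image is closed under all the operations, it is enough to first show that the image contains $\m{F}_q(\ZZ)$ for every prime power $q=p^k$ with $p^k\mid n$, and then combine these.

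\emph{Step 1 (prime powers at one coordinate).} As in the proof of Proposition~\ref{prop:period-subalg}, Remark~6.5 of \cite{GS} gives that $\loc{\m{L}}$ is properly $n$-periodic. Hence, by Lemma~5.5 of \cite{GS}, for each prime power $q\mid n$ there are $f\in\loc{L}$ and $j\in J$ with $q\mid \per(\overline{f}_j)$.

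\emph{Step 2 (transport to coordinate $i$).} By transitivity there is $g\in\Grp{L}$ with $\widetilde{g}(i)=j$. Exactly as in the Remark preceding the lemma, $h=g^{-1}fg$ is local and satisfies $\overline{h}_i=\overline{g}_i^{-1}\overline{f}_j\overline{g}_i$. Because $\overline{g}_i$ is invertible in $\m{F}_n(\ZZ)$, conjugation by it is an automorphism, so $\per(\overline{h}_i)=\per(\overline{f}_j)=:m$, and $q\mid m$. Proposition~\ref{p:Fn-LPn-gen}(i) then gives $\langle\overline{h}_i\rangle=\m{F}_m(\ZZ)\supseteq\m{F}_q(\ZZ)$. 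Thus $\m{F}_q(\ZZ)\subseteq\pi_i[\loc{L}]$ for every prime power $q\mid n$.

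\emph{Step 3 (combining).} Let $Q_n$ be the set of maximal prime powers dividing $n$. For $q\in Q_n$, let $b_q\in\mathrm{F}_q(\ZZ)$ be the closure operator rounding up to the nearest multiple of $q$; it lies in the image by Step 2. Put $b=\bigvee_{q\in Q_n}b_q$ in the image. This join is finite, hence $n$-periodic, and $b\ge 1$. Its fixed points are exactly the integers fixed by every $b_q$, namely $\bigcap_{q\in Q_n} q\ZZ=n\ZZ$. If $b$ were $d$-periodic, its fixed-point set would be invariant under translation by $d$, forcing $n\mid d$; so $\per(b)=n$. For $n>1$ we have $b\neq 1$, and Proposition~\ref{p:Fn-LPn-gen}(i) yields $\langle b\rangle=\m{F}_n(\ZZ)$, so $\pi_i$ is onto. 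For $n=1$ the same conclusion holds with any non-identity element of $\m{F}_1(\ZZ)$ in the image; such an element exists because $J=\loc{\supp}(\m{L})$ by the preceding Remark (or $\m{L}$ is trivial).

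The main obstacle is Step 3: the periodicities found by Step 2 need not all be $n$, and combining elements of different periods is delicate. Composition of the $b_q$ does not behave well. Using the pointwise join and reading off the periodicity from the fixed-point set avoids this.
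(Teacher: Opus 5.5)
For $n\ge 2$ your argument is correct. It differs from the paper's proof mainly in how the prime-power information is assembled at a coordinate.

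\textbf{How the routes differ.} The paper invokes Corollary~3.20 of \cite{GS}: two elements of $\mathrm{F}_n(\ZZ)$ of periodicities $r,s$ generate one of periodicity $\lcm\{r,s\}$. So it only needs, at each coordinate, \emph{some} element of the image whose periodicity is divisible by $p^k$. It obtains these just as in your Steps~1--2, via Proposition~\ref{prop:period-subalg}, Lemma~5.5 of \cite{GS}, and conjugation by an invertible element. You instead apply Proposition~\ref{p:Fn-LPn-gen}(i) already at the prime-power stage, which puts all of $\m{F}_q(\ZZ)$ inside $\pi_i[\loc{L}]$. You then build an element of periodicity exactly $n$ by hand: $\bigvee_{q\in Q_n} b_q$ has fixed-point set $\bigcap_{q} q\ZZ=n\ZZ$, which forces its periodicity to be $n$. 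This replaces the external Corollary~3.20 with an elementary, self-contained computation. The price is that you rely on Proposition~\ref{p:Fn-LPn-gen}(i) for every intermediate periodicity $m$. Given \cite{GS}, the paper's route is shorter.

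\textbf{The case $n=1$.} Your $n=1$ paragraph does not work, for three reasons.
\begin{itemize}
\item A non-identity element of $\m{F}_1(\ZZ)$ such as $x\mapsto x+2$ generates only the even translations. So Proposition~\ref{p:Fn-LPn-gen}(i), as literally stated, cannot be used at $n=1$.
\item The Remark you cite assumes that $\m{L}$ is not an $\ell$-group, which fails when $n=1$.
\item For trivial $\m{L}$ the image is $\{1\}$, so the conclusion fails outright.
\end{itemize}
In fact the lemma is false for $n=1$. Take $\m{L}=\{x\mapsto x+2k : k\in\ZZ\}\leq \m{F}_1(\ZZ)$ with $J$ a singleton. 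It is properly $1$-periodic, the representation is transitive, and $\loc{L}=L$, yet the projection is not onto. The fix is simply to restrict to $n\ge 2$. This is the only case used later, since Corollary~\ref{cor:proper-per-FSI-Fn} is applied with $m>n\ge 1$. It is also the only case the paper's proof covers, because its reduction to prime powers $p^k\mid n$ is vacuous when $n=1$.
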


\begin{proof}
    Corollary~3.20 of \cite{GS} states, specializing to our case, that if $a,b \in \mathrm{F}_n(\ZZ)$ are elements with periodicities $r$ and $s$, respectively, then they generate an element of periodicity $\lcm\{r,s\}$. So, it is enough to show that for each $j\in J$, prime number $p$ and $k\in \ZZ^+$ with $p^k \mid n$, there exists an $f\in \loc{L}$ such that $\overline{f}_j$ has periodicity divisible by $p^{k}$. 
    Moreover, since the representation is transitive, if $\overline{f}_{i}$ has periodicity divisible by $p^k$, then for $g\in \Grp{L}$ with $\widetilde{g}(j) = i$, we have that $\overline{(g^{-1}fg)}_j = \overline{g}_j^{-1}\overline{f}_{\widetilde{g}(j)} \overline{g}_j = \overline{g}_j^{-1}\overline{f}_{i} \overline{g}_j $ has periodicity divisible by $p^k$. 
    Therefore it is enough to find $i\in J$ and $f \in \loc{L}$ such that $\overline{f}_i$ has periodicity divisible by $p^k$. 
    
    Since $\m{L}$ is properly $n$-periodic, by Proposition~\ref{prop:period-subalg}, there exists a positive idempotent $f\in \loc{L}$ with periodicity divisible by $p^k$. 
    Moreover, Lemma~5.5 of \cite{GS} states that $\per(f) = \lcm\{\per(\overline{f}_i) : i\in J\}$, so there is an $i\in J$ such that $\overline{f}_i$ has periodicity divisible by $p^k$. 
\end{proof}

\begin{corollary}\label{cor:proper-per-FSI-Fn}
    For every  subdirectly irreducible properly $n$-periodic $\ell$-pregroup  $\m{L}$, $\m{F}_n(\ZZ) \in \vr(\m{L})$.
\end{corollary}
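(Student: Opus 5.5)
By Lemma~\ref{l: transitive}, the subdirectly irreducible $n$-periodic $\ell$-pregroup $\m L$ has a transitive representation $\m{L}\leq \m{Aut}(\m{J})\wr \m{F}_n(\ZZ)$. Since $\m L$ is properly $n$-periodic, Lemma~\ref{lemma:FSI-subdirect-local} applies to this representation. It tells us that $\loc{\m{L}}\leq \m{F}_n(\ZZ)^J$ is a subdirect product, so for any fixed $i\in J$ the projection $\pi_i\colon \loc{\m{L}}\to \m{F}_n(\ZZ)$ is surjective. (If $J$ were empty, $\m L$ would be trivial and not properly $n$-periodic; for $n=1$ we have $\m F_1(\ZZ)\cong\ZZ$, which lies in every nontrivial variety of $\ell$-groups, but the general argument covers this case too.)

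The remaining step is to see that $\pi_i$ is an $\ell$-pregroup homomorphism on $\loc{\m L}$. On local elements all operations of the wreath product act coordinatewise. The global component is the identity, so $\overline{f}\otimes id=\overline{f}$, and the product, the residuals and the lattice operations are computed in each coordinate separately. Thus $\pi_i$ is the restriction to $\loc{\m L}$ of the $i$th projection $\m F_n(\ZZ)^J\to\m F_n(\ZZ)$, which is a homomorphism. This is the same fact used implicitly in Proposition~\ref{prop:period-subalg} and in the proof of Theorem~\ref{t: main}.

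Therefore $\m{F}_n(\ZZ)$ is a homomorphic image of the subalgebra $\loc{\m{L}}$ of $\m L$, so $\m{F}_n(\ZZ)\in \mathsf{H}\mathsf{S}(\m L)\subseteq \vr(\m L)$. There is no real obstacle here. The substantive work, namely transitivity and subdirectness, is already done in the two preceding lemmas; the only point to check is that the lemmas apply to the same representation, which is the transitive one obtained from Lemma~\ref{l: transitive}.
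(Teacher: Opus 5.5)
Your proof is the paper's own argument: Lemma~\ref{l: transitive} supplies a transitive representation, Lemma~\ref{lemma:FSI-subdirect-local} makes the coordinate projection $\loc{\m L}\to\m F_n(\ZZ)$ surjective (your check that this projection is a homomorphism is correct), and hence $\m F_n(\ZZ)\in\mathsf{HS}(\m L)\subseteq\vr(\m L)$. One caution about your parenthetical: for $n=1$ the subdirectness lemma can fail, because an $\ell$-group can have a transitive representation with trivial local part (e.g.\ $\{(t_k,\overline{1}) : k\in\ZZ\}\leq\m{Aut}(\ZZ)\wr\m F_1(\ZZ)$, with $t_k$ translation by $k$, which is $\cong\ZZ$, subdirectly irreducible and properly $1$-periodic), so for $n=1$ the proof is carried by your fallback that $\m F_1(\ZZ)\cong\ZZ$ lies in every nontrivial variety of $\ell$-groups, not by the general argument (the paper only applies the corollary with $n\geq 2$).
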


\begin{proof}
By Lemma~\ref{l: transitive}, $\m L$ has a transitive representation so, by Lemma~\ref{lemma:FSI-subdirect-local}, $\m{F}_n(\ZZ)$ is a homomorphic image of the subalgebra $\loc{\m{L}}$ of $\m L$. 
\end{proof}

Now we are able to prove the axiomatization result.

\begin{proposition}
    For all $n\in \ZZ^+$, $\bigvee_{k= 1}^n \mathsf{LP}_k$ is axiomatized by $\g_n(x)\g_n(x)^{\ell} \approx 1$.
\end{proposition}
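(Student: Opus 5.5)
Let $\mathsf{V}_n$ denote the variety of $\ell$-pregroups axiomatized by $\g_n(x)\g_n(x)^{\ell}\approx 1$; note that this equation says exactly that $\g_n(x)$ is invertible (since $a^\ell a\le 1\le aa^\ell$ always, and $aa^\ell=1$ together with the dual inequality forces $a^\ell=a^{-1}$; the other side follows because $x\mapsto x^{[k]}$ are automorphisms and a standard argument as in \cite{GS}). I would show both inclusions.

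For $\bigvee_{k=1}^n \mathsf{LP}_k\subseteq \mathsf{V}_n$: since $\mathsf{V}_n$ is a variety it suffices to check each $\mathsf{LP}_k$, $k\le n$. As remarked before the statement, in a $k$-periodic $\ell$-pregroup with $k\le n$ the element $\g_n(a)$ is invertible; indeed $x^{[k]}=x$ there, so $\g_n(a)=\g_k(a)$, which is the largest invertible element below $a$ by \cite{GS}. Hence the equation holds.

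For the converse, the first step is to show $\mathsf{V}_n\subsetneq \mathsf{DLP}$ is not the whole story: I need $\mathsf{V}_n\subseteq\mathsf{DLP}$ and $\mathsf{V}_n\neq\mathsf{DLP}$. Distributivity: I would argue that in $\mathsf{V}_n$ every element is periodic, e.g.\ apply the equation to show that $\g_n(x)$ being invertible forces $x^{[n!]}$-type periodicity; but more simply, it is easier to first show $\mathsf{V}_n\cap\mathsf{DLP}$ is a proper subvariety of $\mathsf{DLP}$, since the equation fails in $\m F(\ZZ)$ (take $x$ a non-periodic strictly positive idempotent, or $\cc^{\ell}$-type element, where $\g_n(x)$ is a meet of finitely many shifts of a non-invertible map, hence not invertible). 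Then by Corollary~\ref{c: subvarDLP}, $\mathsf{V}_n\cap \mathsf{DLP}\subseteq \mathsf{LP}_m$ for some $m$. The issue of whether $\mathsf{V}_n\subseteq \mathsf{DLP}$ at all is the step I expect to be the main obstacle; I would try to derive distributivity by showing the equation implies $\g_n(x)\le x\le \h$-type bounds that force periodicity of each element directly (e.g.\ $x\ge \g_n(x)$ invertible and iterating the $^\ell$ automorphism), then invoke \cite{GJ} that periodic implies distributive.

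Once $\mathsf{V}_n\subseteq \mathsf{LP}_m$, every member is periodic, and it suffices to consider subdirectly irreducible $\m L\in\mathsf{V}_n$, which is properly $k$-periodic for some $k$. By Corollary~\ref{cor:proper-per-FSI-Fn}, $\m F_k(\ZZ)\in\vr(\m L)\subseteq\mathsf{V}_n$. Finally I check that the equation fails in $\m F_k(\ZZ)$ when $k>n$: take the $k$-periodic positive idempotent $e$ collapsing each block $[ik,ik+k-1]$ to $ik$ composed appropriately, or better its dual $e\le 1$ mapping each block down to its minimum; then the shifts $e^{[0]},\dots,e^{[n-1]}$ all fix the points $ik+k-1$ shifted... concretely choose $e$ so that some point is fixed by all $n<k$ shifts while a neighbor is moved, so the meet $\g_n(e)$ is not a bijection, hence not invertible. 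Thus $k\le n$, so $\m L\in\mathsf{LP}_k\subseteq\bigvee_{k=1}^n\mathsf{LP}_k$, giving $\mathsf{V}_n\subseteq\bigvee_{k=1}^n\mathsf{LP}_k$.
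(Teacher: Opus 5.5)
Your outline is the paper's argument: the easy inclusion via $\g_n(a)=\g_k(a)$ in a $k$-periodic algebra, then properness of the equational class inside $\mathsf{DLP}$, Corollary~\ref{c: subvarDLP}, a subdirectly irreducible model being properly $k$-periodic, Corollary~\ref{cor:proper-per-FSI-Fn}, and failure of the equation in $\m F_k(\ZZ)$ for $k>n$. You also make explicit the use of the main theorem, which the paper's proof relies on only implicitly when it takes its subdirectly irreducible $\m L$ to be periodic.

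\textbf{The gap.} You take $\mathsf{V}_n$ to be the class of \emph{all} $\ell$-pregroups satisfying the equation, and then leave $\mathsf{V}_n\subseteq\mathsf{DLP}$ unproved. Your suggested route (``bounds that force periodicity'', then periodic implies distributive) is not an argument. Nothing in the paper supplies one either: Theorem~\ref{t: main} and Corollary~\ref{c: subvarDLP} start from a representation $\m L\leq\m F(\m J\overrightarrow{\times}\ZZ)$, which exists only for distributive $\m L$. The unrelativized claim would in particular show that every $\ell$-pregroup satisfying $\g_n(x)\g_n(x)^{\ell}\approx 1$ is distributive, which the paper does not address. The proposition is meant relative to $\mathsf{DLP}$: the paper's proof starts from a subdirectly irreducible $\m L\in\mathsf{DLP}$. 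So work with $\mathsf{V}_n\cap\mathsf{DLP}$ throughout and drop the ``main obstacle''; then your chain of inclusions goes through.

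\textbf{Supporting claims that need repair.}
\begin{enumerate}
\item The claim that a meet of finitely many shifts of a non-invertible map is non-invertible is false. Since $\cc^{[j]}$ moves only the point $j$, you get $\g_n(\cc)=1$ for $n\geq 2$, although $\cc$ is a non-periodic strictly positive idempotent. Your alternative $\cc^{\ell}$ does work: $\g_n(\cc^{\ell})$ sends $1,\dots,n$ to $0,\dots,n-1$ and fixes every other point, so it misses $n$. Properness is immediate anyway once you know that $\m F_{n+1}(\ZZ)\in\mathsf{DLP}$ fails the equation.
\item The equivalence ``the equation holds iff $\g_n(x)$ is invertible'' is only hand-waved, and you do not need it. In $\m F(\ZZ)$ one has $gg^{\ell}=1$ iff $g$ is surjective, and a surjective periodic map is a bijection. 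Hence in $\m F_k(\ZZ)$ non-invertibility of $\g_n(e)$ does give failure.
\item For $e(x)=k\lfloor x/k\rfloor$, each point is fixed by at most one of $e^{[0]},\dots,e^{[n-1]}$, so your ``fixed by all shifts'' description does not fit this $e$. The witness is still fine: the image of $\g_n(e)$ consists exactly of the residues $0,\dots,n-1$ modulo $k$, so $\g_n(e)$ is not surjective when $k>n$. The map fixing residues $0,\dots,n-1$ and sending residues $n,\dots,k-1$ to $n-1$ is one that does fit your description.
\end{enumerate}
The paper merely asserts this failure in $\m F_m(\ZZ)$. With the relativization to $\mathsf{DLP}$ and these repairs, your write-up is complete.
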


\begin{proof}   
      $\bigvee_{k= 1}^n \mathsf{LP}_k$ satisfies the equation  $\g_n(x)\g_n(x)^{\ell} \approx 1$, as observed in \cite[Remark~2.5]{GS}. 
      Conversely, let $\m{L} \in \va{DLP}$ be a  subdirectly irreducible that satisfies the equation. 
      If  $\m{L}$ were properly $m$-periodic for some $m>n$, then by Corollary~\ref{cor:proper-per-FSI-Fn} we would get $\m{F}_m(\ZZ) \in \vr(\m{L})$, contradicting the fact that $\m{F}_{m}(\ZZ)$ fails the equation $\g_n(x)\g_n(x)^{\ell} \approx 1$ for all $m>n$. 
      So, $\m{L}\in \bigvee_{k= 1}^n \mathsf{LP}_k$.
\end{proof}

\section*{Acknowledgments}
This project has received funding from the European Union’s Horizon 2020 research and innovation programme under the Marie Skłodowska-Curie grant agreement No 101007627. The second author was supported by the Swiss National Science Foundation (SNSF), grant no. 200021\textunderscore215157.
No use of Artificial Intelligence was made in obtaining or presenting the results in this manuscript. (The results were presented in the conferences BLAST in May 2026 and TACL in July 2026.)

\end{document}